  \newlength\squareheight
  \newcommand\squareslash{\tikz{\draw (0,0) rectangle (\squareheight,\squareheight);\draw(0,0) -- (\squareheight,\squareheight)}}
  \newcommand\squarebslash{\tikz{\draw (0,0) rectangle (\squareheight,\squareheight);\draw(0,\squareheight) -- (\squareheight,0)}}
  \newlength\hsquareheight
  \newcommand\squarehslash{\tikz{\draw (0,0) rectangle (\squareheight,\squareheight);\draw(0,\hsquareheight) -- (\squareheight,\hsquareheight)}}
\DeclareFontFamily{U}{mathx}{\hyphenchar\font45}
\DeclareFontShape{U}{mathx}{m}{n}{
      <5> <6> <7> <8> <9> <10>
      <10.95> <12> <14.4> <17.28> <20.74> <24.88>
      mathx10
      }{}
\DeclareSymbolFont{mathx}{U}{mathx}{m}{n}
\DeclareMathAccent{\widecheck}{0}{mathx}{"71}
\DeclareMathAccent{\wideparen}{0}{mathx}{"75}
\newcommand{\BR}{\mathbb{R}}
\newcommand{\mB}{{\cal B}}
\newcommand{\bbeta}{{\boldsymbol \beta}}
\newcommand{\bvarepsilon}{{\boldsymbol \varepsilon}}
\def\boxit#1{\vbox{\hrule\hbox{\vrule\kern6pt
          \vbox{\kern6pt#1\kern6pt}\kern6pt\vrule}\hrule}}
\numberwithin{equation}{section}
\theoremstyle{plain}
\newtheorem{theorem}{Theorem}[section]
\newtheorem{lemma}{Lemma}[section]
\newtheorem{corollary}{Corollary}[section]
\newtheorem{remark}{Remark}
\begin{document}

\begin{frontmatter}
\title{Optimal False Discovery Control of Minimax Estimators}
\runtitle{Optimal False Discovery Control}

\begin{aug}
\author{\fnms{Qifan} \snm{Song}\thanksref{t2}\ead[label=e1]{qfsong@purdue.edu}}
\and
\author{\fnms{Guang} \snm{Cheng}\thanksref{t2}\ead[label=e2]{guangcheng@ucla.edu}}

\thankstext{t1}{Sponsored by NSF DMS-1811812}
\thankstext{t2}{Sponsored by NSF DMS-1712907, DMS-1811812, DMS-1821183, NSF-SCALE MoDL (2134209)}
\runauthor{Q. Song and G. Cheng}

\affiliation{Purdue University}


\end{aug}

\begin{abstract}
Two major research tasks lie at the heart of high dimensional data analysis: accurate parameter estimation and correct support recovery. The existing literature mostly aims for either the best parameter estimation or the best model selection result, however little has been done to understand the potential interaction between the estimation precision and the selection behavior. In this work, our minimax result shows that an estimator's performance of type I error control directly links with its $L_2$ estimation error rate, and reveals a trade-off phenomenon between the rate of convergence and the false discovery control: to achieve better accuracy, one risks yielding more false discoveries. In particular, we characterize the false discovery control behavior of rate optimal and rate suboptimal estimators under different sparsity regimes, and discover a rigid dichotomy between these two estimators under near-linear and linear sparsity settings. In addition, this work provides a rigorous explanation to the incompatibility phenomenon between selection consistency and rate minimaxity which has been frequently observed in the high dimensional literature.
\end{abstract}

\begin{keyword}[class=MSC]
\kwd[Primary ]{62J05}
\kwd[; secondary ]{62C20}
\end{keyword}

\begin{keyword}
False discovery control; high dimension analysis; rate minimaxity; selection consistency.
\end{keyword}

\end{frontmatter}

\section{Introduction}\label{intro}


Modern studies in the sciences collect huge data sets which include information of a large number of potential explanatory variables, and then attempt to discover the possible association between these variables and the response of interest. For example,
in the genome-wide association study (GWAS), where researchers want to find which genetic variants are associated with a trait,
we collect high dimensional single-nucleotide polymorphism (SNP) arrays and then 
aim at finding the association between the trait and SNPs.


For simplicity, we start with the normal means model, which is the simplest form of high dimensional regression:
\begin{equation}\label{normalmean}
    y=\beta+\varepsilon,
\end{equation}
where $y\in\BR^n$, $\varepsilon\sim N(0,I_n)$, and $\beta$ is a $p_n$-dimensional regression parameter with $p_n=n$.
The parameter $\beta$ is assumed to be sparse and has at most $s_n$ nonzero entries. Throughout this paper, we assume $\limsup s_n/p_n = \zeta$, where the constant $\zeta\in[0,1)$. Note this sparsity setting includes both the classical strict sparse model, i.e., $s_n/p_n\rightarrow 0$, and the linear sparsity model, i.e., $\lim s_n/p_n\in(0,1)$ (e.g.,\cite{SuBC2017}).

Given an estimator $\widehat\beta$, there are generally two types of evaluation on its performance: one is the quantitative accuracy which is usually measured by $L_2$ distance $\|\widehat\beta-\beta\|$; another is the selection correctness, i.e., the difference between the two sparse models induced by $\widehat\beta$ and $\beta$. In the past decades, there is rich literature studying either estimation accuracy (e.g., rate of convergence, minimax estimation) or selection behavior (e.g., false discovery rate control, variable selection consistency, Hamming loss). But few research has been conducted to understand the interplay between estimation accuracy and selection behavior. It is not only of mathematical interest to investigate the relationship between estimation and selection. One important practical question would be ``can we find one optimal estimation for the purpose of both accuracy estimation and correct selection?''. The obtained statistical insights would also be beneficial to applied researchers who seek decision making on the goodness of both estimation and selection in a certain way, e.g., when the risk is defined as some combination of estimation error and selection error. 

The first objective of this work is to investigate the relationship between $L_2$ convergence rate and false discovery, or type I error, control. Variable selection of $\beta$ can be alternatively viewed as a multiple hypotheses testing problem $$H_j: \beta_j=0\,\;\;\mbox{for all}\;\;j=1,\dots,p_n.$$  In the literature of multiple testing problems, one usually aims to control and balance type I and type II errors, such that one can make as many rejections as possible while type I errors are still under control, in the sense of, e.g., bounded familywise error rate or bounded false discovery rate (FDR \cite{BenjaminiH1995}).
For instance, \cite{arias2017distribution,bogdan2011asymptotic,salomond2017risk} explored the optimality of independent multiple testing procedure in terms of minimizing the sum of false positive (rate) and false negative (rate) under certain asymptotic settings.  
These aforementioned works help us to design inference procedures that pursue the greatest testing power subject to limited type I errors. However, for researchers  who care about both prediction accuracy and model interpretability, a different task may arise, that is to pursue the smallest estimation error subject to limited type I errors.

The most relevant work in the literature that established connections between the convergence rate and false discovery control is done by \cite{AbramovichBDJ2006}. It studied the convergence of the Benjamin-Hochberg (B-H) estimator $\widehat \beta(y,q_n)$, where $\widehat\beta_i = y_i$ if $H_i$ is rejected by  Benjamin-Hochberg FDR control procedure \cite{BenjaminiH1995} with targeted FDR level $q_n$. \cite{AbramovichBDJ2006} proved that if $s_n\in[\log^5n, n^{1-\delta}]$, $q_n\geq \gamma/\log n$ and $\lim q_n=q\leq 1/2$ for some $\delta,\gamma>0$, then the B-H estimator is sharply minimax, i.e.,
\[\begin{split}
&\lim_{n\rightarrow\infty}\frac{\sup_{\{\beta \mbox{ is $s_n$-sparse}\}}E_\beta\|\beta-\widehat\beta(y,q_n)\|^2}{R_{opt}}= 1,\mbox{ and }\\
&R_{opt}=\inf_{\widehat\beta} \sup_{\{\beta \mbox{ is $s_n$-sparse}\}}E_\beta\|\beta-\widehat\beta\|^2
=(2+o(1))s_n\log(n/s_n),
\end{split}
\]
where the ``inf'' is taken over all estimators, the ``sup'' is taken over all possible true parameters $\beta$'s with at most $s_n$ nonzero entries, and the little-o notation $o(1)$ stands for some sequence converging to 0. In other words, we indeed have a good choice of estimator $\widehat \beta(y,\gamma/\log n)$, which guarantees a decreasing FDR and is also optimally accurate. Similar results that show a connection between the FDR control and rate minimaxity can also be found in \cite{castillo2018spike} and \cite{SuC2016}.

Beyond these existing results, this paper aims to provide more general in-depth insight of the interaction between accuracy and false discovery control. We investigate this matter by figuring out the best achievable (i.e., minimax) type I error control behavior subject to a pre-specified rate of convergence. The quality of type I error control can be naturally measured by the number of false positive selections, and our main theorem
establishes the following ``minimax-type'' result for normal means problem:
\begin{eqnarray}\label{res00}
\inf_{\{\widehat\beta\;\mbox{is rate-$\sqrt{s_nf_n}$}\}}\sup_{\{\beta \mbox{ is $s_n$-sparse}\}}E[\mbox{FP}(\widehat\beta)]\asymp p_n\exp\{-(0.5+o(1))f_n\},\label{mini}
\end{eqnarray}
where $\mbox{FP}(\widehat\beta)$ denotes the number of false positives resulting from $\widehat\beta$, the ``inf'' is taken over all estimators whose $L_2$ convergence rates are no larger than $\sqrt{s_nf_n/n}$ for some given function $f_n$, which may depend on $n, p_n$ and $s_n$, and ``$\asymp$'' represents the equivalency in asymptotic order (refer to the Notation paragraph at the end of Section \ref{intro}). 
The above result (\ref{mini}) essentially reveals a trade-off between estimation accuracy and false discovery control, that is, a more accurate estimator (i.e., a smaller $f_n$) can yield more false discoveries and vice versa. 
The rationale behind this counter-intuitive result is that, a good estimation requires a delicate balance between type I and type II errors. Aggressively avoiding type I errors will inevitably increase the number of type II errors, and lead to an inaccurate estimation; vice versa, an accurate estimation must tolerate the occurrence of type I errors.

Note that the above minimax result can be re-phrased as follows: the best possible convergence rate of $\widehat\beta$, subject to that its average number of false discoveries is always bound by $d$, is
\[
\sup_{\beta\mbox{ is $s_n$-sparse}}E\|\widehat\beta-\beta\|^2\asymp {s_n}\log\left(\frac{p_n}{d}\right).
\]


In the literature, various estimators achieve rate optimal convergence for the normal means sequence model, i.e., their $L_2$ convergence rate is of the order $\sqrt{s_n\log(p_n/s_n)}$ \cite{Castillov2012, AbramovichBDJ2006, WuZ2013, BhattacharyaPPD2015, Rockova2015,SilvermanJ2004, song2020bayesian}.
For such a class of rate minimax estimators, our general result (\ref{mini}) implies that 
\begin{equation}\label{res0}
\begin{split}
\inf_{\{\mbox{rate-minimax } \widehat\beta \}}\sup_{\{\mbox{$s_n$-sparse }\beta \}}&E(\mbox{FP})\asymp p_n(s_n/p_n)^{\tau+o(1)},
\end{split}
\end{equation}
and we furthermore show that 
\begin{equation}\label{res1}
\begin{split}
\inf_{\{\mbox{rate-minimax } \widehat\beta \}}\sup_{\{\mbox{$s_n$-sparse }\beta \}}&\mbox{FDR}\asymp (s_n/p_n)^{\tau-1+o(1)}.
\end{split}
\end{equation}
In above equations (\ref{res0}) and (\ref{res1}), the ``inf'' is taken over all estimators $\widehat\beta$ whose $L_2$ convergence rate are smaller than $(2\tau s_n\log(p_n/s_n))^{1/2}$ for some constant $\tau$, i.e., $2\tau$ is the multiplicative constant of the convergence rate of $\widehat\beta$. Therefore, both minimax false discovery number and minimax false discovery rate follow a polynomial of the sparsity ratio. It is worth mentioning that \eqref{res1} requires that $\tau>1$ strictly, which excludes the sharply minimax estimators (e.g., the B-H estimator). Therefore, our contribution on characterizing the minimax FDR of rate minimax estimator is not an improvement over existing work \cite{AbramovichBDJ2006}, but a valuable addition.

A toy simulation is conducted to verify the polynomial convergence speed for the false discovery rate in the normal means models where $n=p_n$ and $s_n=n^{1/2}$ and all nonzero $\beta_i$'s are set as $[2\log(n/s)]^{1/2}$. Note that this choice of magnitude of $\bbeta$ represents the most difficult case for minimax estimators to control false discovery (refer to our discussion for Theorem \ref{impossibility3}). The rate minimax estimator (\ref{SCest}) in Section \ref{fest} is used in the simulation with tuning parameter value $\gamma=2.1$. Figure \ref{fdpplot} plots the logarithm of estimated FDR based on 100 independent simulations versus the logarithm of 
true sparsity ratio $\log(s_n/n)$. The plot displays a clear and strong linear trend with $R^2=0.9882$.

\begin{figure}[htp]
 \begin{center}
  \includegraphics[width=9cm]{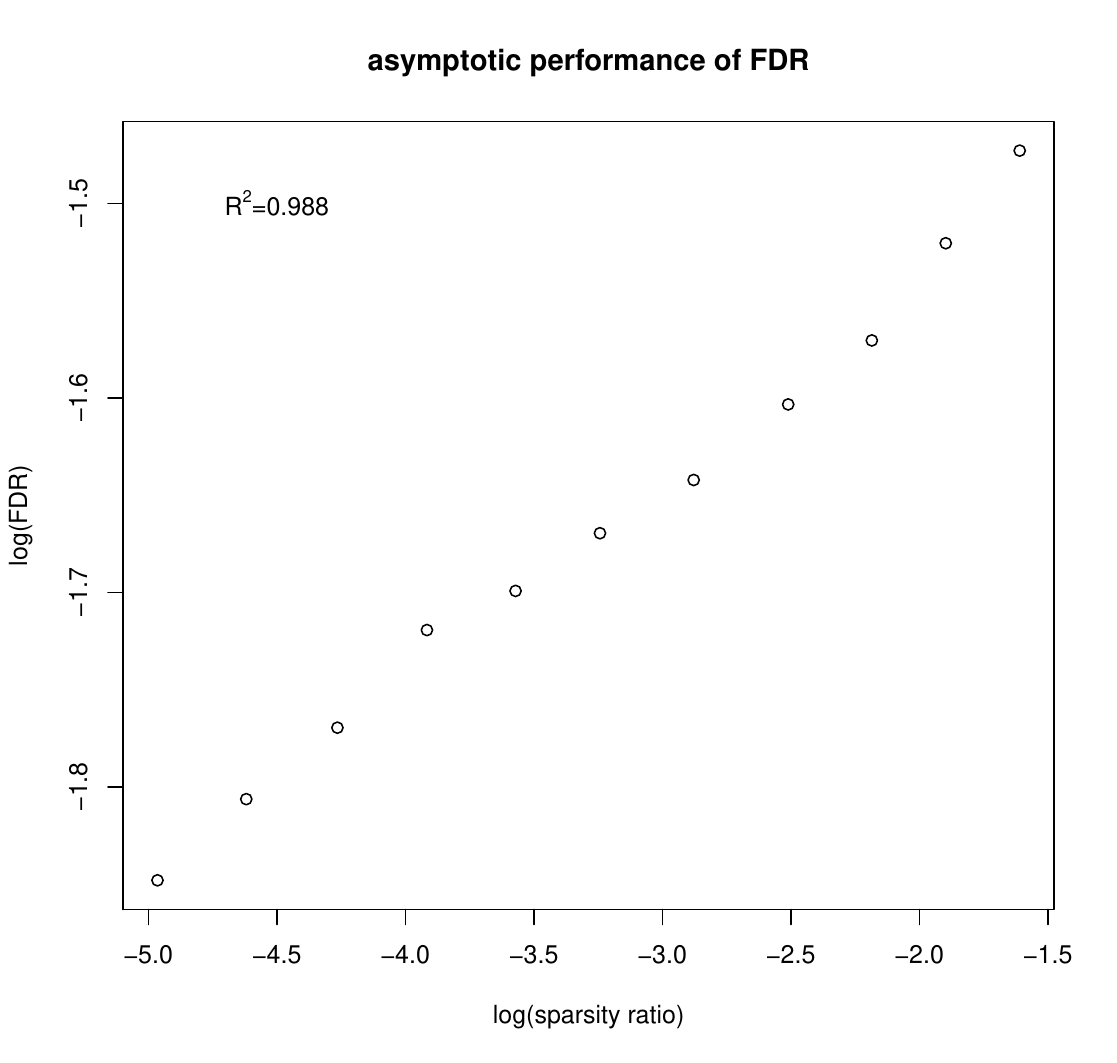}\label{fdpplot}\vskip -0.25in
  \caption{This figure plots the patter of FDR with respect to the sparsity ratio. }
 \end{center}
\end{figure}

The analysis carried out in this paper enables us to characterize the best possible type I error control behavior of rate optimal estimators (with suboptimal multiplicative constant) according to the sparsity growth rate: 
\begin{enumerate}
	\item (Polynomial sparsity) If $s_n\leq p_n^{\alpha}$ for some $\alpha<1$, then rate minimax estimators, at best, guarantee that there is no false discovery asymptotically (as long as its multiplicative constant of $L_2$ convergence is sufficiently large);
	
	\item (Near-linear sparsity) If $s_n/p_n\rightarrow 0$ and $\log s_n / \log p_n\rightarrow 1$,  rate minimax estimators, at best, guarantee that the false discovery rate decays to 0 at a polynomial rate of $s_n/p_n$. However, under the worst-$\beta$ scenario, its expected number of false positive selections diverges to infinite and its probability of committing false positives converges (i.e., the familywise error rate) to 1.
	
	\item (Linear sparsity) If $s_n=\zeta p_n$ for some fixed $\zeta\in(0,1)$, for any rate minimax estimator, its false discovery rate is bound away from 0 and its number of false discoveries tends to infinity, under the worst-$\beta$ scenario.
\end{enumerate}

As revealed by \eqref{res0} and \eqref{res1}, the multiplicative constant matters. Under polynomial sparsity, rate minimax estimators can guarantee no false discovery only when the multiplicative constant scales up with $\alpha$, and sharply minimax estimators (whose multiplicative constant is 2) never guarantee zero false discovery no matter how small the $\alpha$ is  (refer to Figure \ref{smpplot}). But under near-linear sparsity, rate minimaxity estimators may always yield false discoveries regardless of the multiplicative constant, and this finding is consistent with existing results, e.g., \cite{BhattacharyaPPD2015, GaovZ2015} showed that some Bayesian rate minimax estimation procedure selects larger models than the true model. 
It is worth mentioning that the near-linear sparsity setting, e.g., $s_n=p_n/\log p_n$, is still a strict sparse setting in the sense that the sparsity ratio tends to 0. 

In the literature, another common $L_2$ convergence rate for $\widehat\beta$ is $\sqrt{s_n\log p_n}$, which is usually attained by model selection consistent estimators (e.g., soft or hard thresholding estimators with threshold $\sqrt{2\log p_n}$). This rate is referred as ``suboptimal rate'', since it is inferior to minimax rate under near-linear or linear sparsity. Interestingly, we show that rate suboptimal estimators, regardless of the sparsity growth, are always capable to achieve zero false discovery asymptotically (in mean or in probability), providing that the multiplicative constant is greater than 2.


To compare the class of rate optimal and the class of rate suboptimal estimators: 1) Under polynomial sparsity, two actually share the same rate but with potentially different multiplicative constants. Hence they can achieve that same best possible type I error control limit, as long as one is willing to compromise on larger multiplicative constants. For example, the tuning parameter in a rate minimax estimation procedure needs to be properly selected, such that the yielded multiplicative constant is adaptive to the sparsity growth. 2) Beyond polynomial sparsity,
the two become totally different in terms of balancing estimation precision and type I error. Specifically, the suboptimal estimators attain the best possible estimation accuracy subject to zero false discovery, while rate optimal estimators can achieve strictly better estimation accuracy at the expense of worse type I error control performance. In other words, under relatively denser sparsity regimes, rate optimal and rate suboptimal estimators represent two different balancing strategies. These asymptotic characterizations don't directly apply to real applications, since one can not determine the sparsity regime of a finite-sample data set. However, the insights behind these theoretical results suggest that: for high dimensional problems where the true models are likely to be very dense, researchers shall carefully choose rate optimal or rate suboptimal estimators that best fits their research objectives. As an instance of large true models, in Gene Regulatory Networks studies, it is usually believed that there are thousands of active genes interacting with each others (e.g., \cite{wilkins2016egrins} selected 5,447 genes among 56,000 genes, \cite{koda2017diurnal} selected 2,187  genes among 3,621 genes).


Our study of the relationship between $L_2$ convergence and false discovery control is further generalized to the high dimensional linear models:
\begin{equation}\label{normalregression}
 y=X\beta+\bvarepsilon,
\end{equation}
where $X\in\BR^{n\times p_n}$, the dimension $p_n$ is potentially much larger than the number of observations $n$, and $\beta$ is an $s_n$-sparse parameter. In the literature, one usually performs penalized estimation:
\[
\widehat\beta = \underset{\beta}{\mathrm{argmin}} \|y-X\beta\|^2+pe(\beta),
\]
for some penalty function $pe$. Two typical examples are LASSO \cite{Tibshirani1996} ($pe(\beta)=\lambda \sum_{j=1}^{p_n} |\beta_j|$) and SLOPE \cite{BogdanvSSC2015} ($pe(\beta)=\lambda\sum_{j=1}^{p_n}\Phi^{-1}(1-jq/2p_n) |\beta|_{(j)}$\footnote{$|\beta|_{(j)}$ is the  sorted descending value of $|\beta_j|'s$.}), where the former is not adaptive to unknown sparsity (i.e., it requires the knowledge of true $s_n$ to attain minimax rate), and the latter one is adaptive to unknown sparsity.
Compared with the normal means models \eqref{normalmean}, there is usually a root-$n$ difference between the convergence rates for the two models, i.e., the minimax rate for regression models is $\sqrt{s_n\log(p_n/s_n)/n}$ \cite{RaskuttiWY2011} under proper regularity conditions. The column dependencies of design matrix $X$ complicate the analysis of false discovery control, and there are limited studies characterizing the selection behavior for regression models: for instance, \cite{SuBC2017, WeinsteinBC2017} studied the relationship between type I and type II errors along the LASSO regularization path. In terms of the relationship between accuracy and false discovery control, to the best of our knowledge, the only significant result is that the SLOPE estimator achieves almost sharply minimax and ensures a decaying FDR \cite{SuC2016,KosB2019} under Gaussian design. Our paper shows that, under Gaussian design and proper regularity of dimensionality, results \eqref{res00}, \eqref{res0} and \eqref{res1} still hold, except for a root-$n$ adjust on the convergence rate.

The second objective of this work is to understand the relationship between convergence rate and model selection consistency. This particular research objective is motivated by the observation that in the literature, most regression estimators can not achieve rate optimality and selection consistency simultaneously, even under strict sparsity assumption that $s_n/p_n\rightarrow 0$. For example, under normal means models with known error variance $\sigma=1$, the hard thresholding estimator is defined as $\widehat\beta_i = y_i1(|y_i|>t)$. When $t\geq(2\log(n-s_n))^{1/2}$, it achieves selection consistency, but only attains suboptimal convergence rate. When $t=\sqrt{c\log(n/s_n)}$ for some constant $c$, it obtains rate minimax convergence \cite{DonohoJ1994}, but will yields inconsistent selection under near-linear sparsity. Similar phenomenon occurs to the LASSO estimator as well. When the tuning parameter of LASSO is of an order $\sqrt{\log p_n/n}$, LASSO leads to selection consistency but rate suboptimal estimation; when the tuning parameter is of a smaller order $\sqrt{\log (p_n/s_n)/n}$, LASSO estimator is rate optimal \cite{YeZ2010}, but inconsistent for variable selection under near-linear sparsity.

One must be aware of that, false discovery control and model selection consistency, although both measure the goodness of selection procedures, consider different domains of parameter space. The former is interested in all $s_n$-sparse $\beta$'s regardless of the magnitudes of its nonzero entries, while the latter is only interested in the $s_n$-sparse $\beta$’s whose nonzero entries are greater than a certain minimal threshold ({depending on $n$, $s_n$ and $p_n$}), i.e., the so-called the beta-min condition in the literature. 
Hence, selection consistency is not a sufficient condition for zero false discovery. Our conclusion that rate optimal estimators cannot guarantee zero false discovery {uniformly for different sparsity levels (as long as $s_n=o(p_n)$)}, doesn't directly imply that, rate optimal estimators cannot guarantee model selection consistency uniformly for different sparsity levels.

Our theoretical investigation reveals a general rule that for normal means models \eqref{normalmean}: if an estimator possesses a certain monotone property (to be defined later), then this estimator can never be both rate minimax and selection consistent uniformly over all sparsity regimes. To be specific, this incompatibility occurs under near-linear and linear sparsity.
This explains that in the literature, researchers fail to prove selection consistency for rate optimal estimators, since almost all the penalized estimators or Bayesian estimators are monotone. Therefore, the best possible selection consistency result for monotone rate optimal estimators is categorized as 

(a) under polynomial sparsity, they can achieve selection consistency; 

(b) under near-linear sparsity, they can achieve almost full model recovery \cite{ButuceaST2017}, that is, 
\[ \frac{\mbox{the number of false positives and false negatives}}{\mbox{true model size}}\longrightarrow 0 \mbox{ in probability}.\]



The above two research objectives, together, precisely depict the interplay and trade-off between estimation convergence speed and selection correctness. 
Compared with the aforementioned existing results,
which studied specific estimators (i.e., B-H or SLOPE estimator) within a narrower range of sparsity (e.g., polynomial sparsity), our work has a more general scope that aims to understand the minimax false discovery performance of estimators with different convergence speed, under various sparsity scenarios. In addition, we make a subtle comparison of the false discovery control behavior between rate optimal estimators and rate suboptimal estimators. Figure \ref{smpplot} in Section \ref{impo} summarizes what is the best possible false discovery control performance of rate optimal and suboptimal estimators under different sparse regimes for the normal means models, and delivers the general insight that for rate minimax estimators, the difficulty of false discovery control increases as sparsity ratio increases. 


This paper is organized as follows.
In Section \ref{impo}, we investigate the relationship between the rate of convergence and false discovery rate, and establish the minimax rate for the number of false discoveries and the false discovery rate, for both normal mean models and regression models with Gaussian random design.
In Section \ref{impo2}, we establish the incompatibility between rate minimaxity and model selection consistency for monotone estimators under normal means models.
Section \ref{fest} shows that the minimax rate discussed in Sections \ref{impo} can be achieved 
by adaptive estimators induced by $\sum_{i=1}^k\log(p/i)$-penalization. Some more discussion and remarks are provided in Section \ref{diss}.
All the proofs are provided in the supplementary material.

\vskip 0.2in
{\noindent\bf Notation.} We use $\xi\subset\{1,\dots,p\}$ to denote a model subset, and let $|\xi|$ be the size of this model.
For any vector $\beta\in\BR^{p}$ and design matrix $X\in\BR^{n\times p}$,
$\beta_\xi$ and $X_\xi$ denote the sub-vector or submatrix corresponding to the model $\xi$. With slight abuse of the notation, we use $\xi(\beta)$ as the operator that extracts the model of $\beta$, i.e.,
$\xi(\beta)=\{j; \beta_j\neq 0\}$. Denote $\|\beta\|_{\min}=\min_{\beta_i\neq0}|\beta_i|$, i.e., the minimum signal strength of $\beta$. 
Let $\mB(p_n,s_n)=\{\beta\in\BR^{p_n}: \|\beta\|_0\leq s_n\}$ denote all $s_n$-sparse vectors in the $p_n$ dimensional space.
For two sequences of positive values $\{a_n\}_{n=1}^\infty$ and $\{b_n\}_{n=1}^\infty$, $a_n\succ b_n$ means that $\lim_n a_n/b_n = \infty$,
$a_n\asymp b_n$ means $0<\lim\inf a_n/b_n\leq\lim\sup a_n/b_n<\infty$, 
$a_n\sim b_n$ means that $\lim a_n/b_n =1$, and 
$a_n\lesssim b_n$ means that $\limsup(a_n-b_n)\leq0$.
Little o notation $o(1)$ represents some sequence converging to 0, and $o^+(1)$ represents some positive sequence converging to 0.

\section{Estimation Rate and False Discovery Control}\label{impo}

In this section, we investigate the trade-off between type I error control behavior and rate of convergence under both normal means models and  linear regression models. Specifically, we characterize false discovery control behaviors (including the number of false discoveries and false discovery rate) of (sub-)optimal estimators under different sparsity regimes. A general message is that an estimator can yield more type I errors if it possesses a faster convergence rate. 

\subsection{Normal Means Models}\label{nmm}
We first illustrate our main results under normal means models
$y=\beta+\varepsilon,$ where
$\varepsilon\sim N(0,I_n)$ and the true parameter $\beta\in\mB(n,s_n)$.
An estimator, denoted by $\widehat \beta(\cdot)\in \mathbb{R}^n$, is any function of $y$. It can be either an adaptive estimator, which does not require any knowledge about the unknown sparsity $s_n$, or a non-adaptive one which involves the unknown $s_n$.  
To state our result, we denote $\mbox{FP}(\widehat\beta)$ as the number of false discoveries yielded by any estimator $\widehat\beta(y)$.
Given a function $f(n,s_n)$, let $$\Omega_f=\{\widehat\beta: \sup_{\beta\in\mB(n,s_n)}E_{\beta}\|\beta-\widehat \beta\|^2\leq s_nf(n,s_n)\}$$ be the collection of estimators with convergence rate $\{s_nf(n,s_n)\}^{1/2}$, where $E_{\beta}$ denotes the expectation with respect to the data $y$ that is generated under true parameter $\beta$. Note that the function $f(n,s_n)$ must be bounded away from 0, since the oracle parametric rate is $O(\sqrt{s_n})$. 

Our first theorem studies the minimax lower bound for the expected number of false discoveries.
\begin{theorem}\label{impossibility3}
For the normal means model, the following non-asymptotic lower bound holds:
\begin{equation}\label{nonasy}
\inf_{\widehat\beta\in\Omega_f}\sup_{\beta\in\mB(n,s_n)}E_{\beta} [{\rm FP}(\widehat\beta)] \geq (n-s_n+1)\sup_{c>1}\Phi(\Phi^{-1}(1-1/c)-\{cf(n,s_n)\}^{1/2}),
\end{equation}
where $\Phi(x)$ is the distribution function of standard normal distribution.
Asymptotically, if $\lim\sup f(n,s_n)=\kappa<\infty$, then as $n$ tends to infinity,
\[\inf_{\widehat\beta\in\Omega_f}\sup_{\beta\in\mB(n,s_n)}E_{\beta} [{\rm FP}(\widehat\beta)] \geq H(\kappa)\times(1-\zeta)n\]
where $\zeta=\lim s_n/p_n$ and $H(\kappa)\in(0,1)$ is a positive constant that decreases as $\kappa$ increases (the definition of function $H(\cdot)$ can be found in the proof in the supplementary material);
if $\lim f(n,s_n)=\infty$, then 
\[\inf_{\widehat\beta\in\Omega_f}\sup_{\beta\in\mB(n,s_n)}E_{\beta} [{\rm FP}(\widehat\beta)] \geq C(1-\zeta)n\exp\{-[1+o^+(1)]f(n,s_n)/2\},\]
where $C$ is some absolute constant, and $o^+(1)$ represents some positive sequence converging to 0.
\end{theorem}

To complete the minimax analysis, an upper bound result is necessary. Since $\mbox{FP}(\widehat\beta)\leq n$, thus trivially 
$$\inf_{\widehat\beta\in\Omega_f}\sup_{\beta\in\mB(n,s_n)}E_{\beta} [{\rm FP}(\widehat\beta)] = O(n),$$ if $f(n,s_n)\asymp 1$.
On the other hand, if $f(n,s_n)\rightarrow\infty$, we let
$\lim\inf s_nf(n,s_n)/r_{\rm opt}\geq1$ which ensures that $\Omega_f$ is not empty, where $r_{\rm opt}=\inf_{\widehat\beta}\sup_{\beta\in\mB(n,s_n)}E_\beta\|\widehat\beta-\beta\|^2$ denotes the minimax $L_2$ rate\footnote{Here, $r_{\rm opt}$ represents the minimax rate when $\lim s_n/n<\zeta\in[0,1)$, while $R_{\rm opt}$ in the introduction section represents the minimax rate when $\lim s_n/n=0$}. 
Note that this also implies that $\lim\inf [f(n,s_n)/\log(n/s_n)]\geq 2$\footnote{This is because, if $s_n=o(n)$, $r_{\rm opt}=(2+o(1))s_n\log(n/s_n)$; if not, $f(n,s_n)\geq 2\log(n/s_n)$ holds trivially since $\log(n/s_n)$ is bounded.}.

Given such a diverging $f$, we can define a hard-thresholding estimator $$\widehat\beta_i=y_i1\{y_i^2\geq (1-\epsilon_n) f(n,s_n)\}$$ for some $1/f(n,s_n)\prec\epsilon_n\prec 1$. One can show that asymptotically, it belongs to $\Omega_f$, since
\begin{equation}\label{ratequ0}
\begin{split}
 \sup_{\beta\in\mB(n,s_n)}E\|\widehat\beta-\beta\|^2
\leq O(s_n)+ s_n + (1-\epsilon_n)s_nf(n,s_n)< s_nf(n,s_n),
\end{split}
\end{equation}
as $n\rightarrow \infty$. The proof of \eqref{ratequ0} can be found in the supplementary material. And trivially, 
$\mbox{FP}(\widehat\beta)\sim\mbox{Bin}(n-\|\beta\|_0, 2\Phi(-\{(1-\epsilon_n)f(n,s_n)\}^{1/2}))$, and 
\[
E[\mbox{FP}(\widehat\beta)]=2(n-\|\beta\|_0)\Phi(-\sqrt{ (1-\epsilon_n)f(n,s_n)}).
\]
Since $\Phi(x)=O(\exp(-x^2/2))$, we have
\begin{theorem}\label{upper1}
For the normal means model, the following asymptotic upper bound holds:
If $\lim\sup f(n,s_n)=\kappa<\infty$, then as $n$ tends to infinity,
\[\inf_{\widehat\beta\in\Omega_f}\sup_{\beta\in\mB(n,s_n)}E_{\beta} [{\rm FP}(\widehat\beta)] = O(n).\]
If $\lim f(n,s_n)=\infty$, then 
\[\inf_{\widehat\beta\in\Omega_f}\sup_{\beta\in\mB(n,s_n)}E_{\beta} [{\rm FP}(\widehat\beta)] \leq Cn\exp\{-[1-o^+(1)]f(n,s_n)/2\},\]
where $C$ is some absolute constant.
\end{theorem}

The asymptotic lower bound (Theorem \ref{impossibility3}) and upper bound (Theorem \ref{upper1})
match except for an $o^+(1)$ term difference. Therefore, uniformly for both diverging and non-diverging $f$ cases, we claim the following minimax false positive control result:
\begin{equation}\label{minimax0}
\inf_{\widehat\beta\in\Omega_f}\sup_{\beta\in\mB(n,s_n)}E_{\beta} [{\rm FP}(\widehat\beta)] \asymp n\exp\{-[1+o(1)]f(n,s_n)/2\}, \mbox{ as } n\rightarrow \infty.
\end{equation}

The above analysis reveals a trade-off between estimation accuracy and false discovery control. Specifically, if $f_1\gg f_2$ in the sense that $\lim_n f_1-f_2=\infty$, then we have $$\inf_{\widehat\beta\in\Omega_{f_1}}\sup_{\beta\in\mB(n,s_n)}E_{\beta} [{\rm FP}(\widehat\beta)]\prec \inf_{\widehat\beta\in\Omega_{f_2}}\sup_{\beta\in\mB(n,s_n)}E_{\beta} [{\rm FP}(\widehat\beta)].$$ Note that the hard-thresholding estimate, which is used to establish the upper bound in Theorem \ref{upper1}, relies on the value of $s_n$ if $f(n,s_n)$ depends on $s_n$. Later in Section \ref{fest}, we will discuss some adaptive estimator that does not depend on $s_n$.

\begin{remark}
It is important to remind the readers that the $o(1)$ term appearing in the exponent term of (\ref{minimax0}) is not always negligible. When $f$ is diverging, this $o(1)$ term can affect the order, i.e., $\inf_{\widehat\beta\in\Omega_f}\sup_{\beta\in\mB(n,s_n)}E_{\beta} [{\rm FP}(\widehat\beta)]$ may have strictly greater or smaller order than $n\exp\{-f(n,s_n)/2\}$. An equivalent representation of \eqref{minimax0} is 
\[\log\left(\inf_{\widehat\beta\in\Omega_f}\sup_{\beta\in\mB(n,s_n)}E_{\beta} [{\rm FP}(\widehat\beta)]\right) \sim \log n-[1+o(1)]f(n,s_n)/2.\]
Note that the above argument applies to our other minimax results, e.g., (\ref{minimax1}) (\ref{minimaxfdrs}) and (\ref{minimaxfdrlinear}), as well.
\end{remark}

\begin{remark} \label{re1}The right-hand side of the inequality (\ref{nonasy}) is attained when all nonzero entries of the true $\beta$ are of magnitude $\sqrt{f(n,s_n)}$ (Refer to the construction of prior $\pi$ in the proof of Theorem \ref{impossibility3} in the supplementary material). This gives the most difficult case of false discovery control for estimators in $\Omega_f$.
\end{remark}

Two choices of $f(n,s_n)$'s are of interest. The first one is $f(n,s_n)=c_1\log(n/s_n)$ which represents the minimax $L_2$ convergence rate with a multiplicative constant $c_1\geq2$. Respectively, we define
$$\Omega_o(c_1)=\{\widehat\beta: \sup_{\beta\in\mB(n,s_n)}E_{\beta}\|\beta-\widehat \beta\|^2\leq c_1 s_n\log(n/s_n)\}.$$

The next result is a direct consequence of Theorems \ref{impossibility3} and \ref{upper1}.

\begin{corollary}\label{impossibility3-2}
For the normal means model, the following asymptotic result holds for the class of rate optimal estimators 
 with $c_1\geq2$,
\begin{equation}\label{minimax1}
\begin{split}
&\inf_{\widehat\beta\in\Omega_o(c_1)}\sup_{\beta\in\mB(n,s_n)}E_{\beta} [{\rm FP}(\widehat\beta)] \asymp n,\quad\mbox{if } \lim(s_n/n)>0;\\
&\inf_{\widehat\beta\in\Omega_o(c_1)}\sup_{\beta\in\mB(n,s_n)}E_{\beta} [{\rm FP}(\widehat\beta)] \asymp
n(s_n/n)^{(c_1+o(1))/2},\quad\mbox{if } \lim (s_n/n)=0.
\end{split}
\end{equation}
\end{corollary}

By similar arguments of Theorem \ref{upper1}, the minimax rate in Corollary \ref{impossibility3-2} can be attained by some non-adaptive hard-thresholding estimator $$\widehat\beta_i(y) = y_i1(|y_i|^2\geq (c_1-o^+(1))\log(n/s_n)),$$
and an adaptive estimator that achieves this minimax rate in probability is discussed in Section \ref{fest}.
Under $s_n/n\rightarrow 0$, $c_1$, the multiplicative constant of the $L_2$ convergence rate, does negatively affect the minimax rate of false discoveries,
as it appears in the exponent term of $(s_n/n)$. However, under linear sparsity, 
different values of $c_1$ do not change the order in (\ref{minimax1}). Hence, it is not clear how $\inf_{\widehat\beta\in\Omega_o(c_1)} \sup_{\beta\in\mB(n,s_n)}E_\beta(\mbox{FP}(\widehat\beta))$ changes with respect to $c_1$.
We believe that the trade-off phenomenon between accuracy and false discovery control applies to the case of linear sparsity as well, and conjecture that the multiplicative constant negatively impacts false discovery control as well.

\begin{remark}
It is well known in the literature that under strict sparsity, $\beta_j^2\approx c_1\log(n/s_n)$ for all nonzero entries is the most difficult case to be accurately estimated for rate minimax estimators. Remark \ref{re1}  implies that this is also the worst-$\beta$ case for rate minimax estimators to control the number of false discoveries.
\end{remark}

The minimax mean false discovery result (\ref{minimax1}) has different limits across different sparsity regimes. Specifically, under polynomial sparsity that $s_n = O(n^{\alpha})$ for some $\alpha\in(0,1)$, it converges to 0 as long as the multiplicative constant $c_1> 2/(1-\alpha)$; under linear or near-linear sparsity, it diverges to infinite regardless of the value of $c_1$. 

Other than the number of false positives, the false discovery rate is also of interest in the high dimensional literature. Trivially, we have that
\[
\mbox{FDP}(\widehat \beta):=\frac{\mbox{FP}(\widehat \beta)}{\mbox{FP}(\widehat \beta)+\mbox{TP}(\widehat \beta)}\geq\frac{\mbox{FP}(\widehat \beta)}{\mbox{FP}(\widehat \beta)+s_n},
\]
where $\mbox{TP}(\widehat \beta)$ is the number of true positives.
Intuitively, we have the following approximation
\[
\begin{split}
&  \mbox{FDR}(\widehat\beta):=E_\beta(\mbox{FDP}(\widehat\beta))\geq E\frac{\mbox{FP}(\widehat \beta)}{\mbox{FP}(\widehat \beta)+s_n}\approx \frac{E(\mbox{FP}(\widehat \beta))}{E(\mbox{FP}(\widehat \beta))+s_n}.
\end{split}
\]
Combining with the lower bound result in Theorem \ref{impossibility3}, one can obtain a minimax lower bound for FDR in the following theorem. It is worth noting that by Jensen's inequality, the ``$\approx$'' sign in the above equation is actually ``$\leq$'', hence the lower bound presented in Theorem \ref{minimaxfdr} is not trivial at all, and the rigorous proof is provided in the supplementary material.
\begin{theorem}\label{minimaxfdr}
For the normal means model, the following asymptotic lower bound of FDR holds for the class of rate optimal estimators with $c_1>2$,
\[
\inf_{\widehat\beta\in\Omega_o(c_1)} \sup_{\beta\in\mB(n,s_n)}\mbox{FDR}(\widehat\beta)\geq C (s_n/n)^{[(c_1-2+o^+(1))/2]},
\]
where $C$ is some absolute constant.
\end{theorem}
In Section \ref{fest}, we construct an adaptive estimator $\widehat\beta\in\Omega_o(c_1)$ for any $c_1>2$ that satisfies
$\sup_{\beta\in\mB(n,s_n)}\mbox{FDR}(\widehat\beta)\leq C (s_n/n)^{\tau}$ for any $\tau<c_1/2-1$, therefore it immediately implies

\begin{theorem}[Immediate implication of Theorem \ref{SCmean}]
For any $c_1>2$,
\[
\inf_{\widehat\beta\in\Omega_o(c_1)} \sup_{\beta\in\mB(n,s_n)}\mbox{FDR}(\widehat\beta)\leq C (s_n/n)^{[(c_1-2-o^+(1))/2]}.
\]
\end{theorem}
The above two theorems conclude the following minimax result for FDR under the normal means model: 
\begin{equation}\label{minimaxfdrs}
\inf_{\widehat\beta\in\Omega_o(c_1)} \sup_{\beta\in\mB(n,s_n)}\mbox{FDR}(\widehat\beta)\asymp  (s_n/n)^{[(c_1-2+o(1))/2]}, \mbox{ for any }c_1>2.
\end{equation}

This trivially implies that, when $s_n/n\rightarrow 0$, one is capable to achieve rate optimality and asymptotic zero-FDR simultaneously. But under linear sparsity, the FDR of a rate optimal estimator is bounded away from 0 as $n$ increases. We also want to point out that under linear sparsity, rate minimax estimators may still achieve any arbitrarily small pre-specified positive FDR level (e.g., B-H estimator), but the FDR is not ensured to converge to 0 as $n$ increases.


\begin{remark}
The fundamental result \cite{AbramovichBDJ2006} shows that, there exists a minimax estimator (i.e., B-H estimator) whose FDR convergence is of an order $\log(n)^{-1}$ under polynomial sparsity regime. Our minimax theorem, as a {\it supplementary} result to \cite{AbramovichBDJ2006}, claims that under a broader sparsity regime (polynomial or near-linear sparsity), there always exists a rate minimax estimator $\widehat\beta$ (with suboptimal multiplicative constant) such that its FDR converges as fast as a polynomial rate of $(s_n/n)$.
\end{remark}

We next consider another popular suboptimal convergence rate: $f(n,s_n)=c_1\log n$. Note that $s_nf(n,s_n)\propto s_n\log n$ is a common rate of convergence achieved by many penalized estimators, such as LASSO or SCAD, and is commonly considered to be a near-optimal rate. 
Besides, $f(n,s_n)\geq c\log n$ for some $c>0$ is a necessary condition to ensure that the asymptotic lower bound in Theorem \ref{impossibility3} decays to zero as $n$ increases.
Define
$$\widetilde\Omega_o(c_1)=\{\widehat\beta: \sup_{\beta\in\mB(n,s_n)}E_{\beta}\|\beta-\widehat \beta\|^2\leq c_1 s_n\log n\}$$ as the class of all rate suboptimal estimators. Similarly, by Theorems \ref{impossibility3} and \ref{upper1}, we obtain the following corollary.
\begin{corollary}\label{impossibility3-3}
For the normal means model, the following asymptotic result holds for the class of rate suboptimal estimators,
\begin{equation}\label{minimax-fd2}
\begin{split}
&\inf_{\widehat\beta\in\widetilde\Omega_o(c_1)}\sup_{\beta\in\mB(n,s_n)}E_{\beta} [{\rm FP}(\widehat\beta)] \asymp
C(1-\zeta)n^{1-c_1/2+o(1)}.
\end{split}
\end{equation}
\end{corollary}

Theorem \ref{impossibility3} and Corollary \ref{impossibility3-3} together tell us that, rate suboptimal estimator (with multiplicative constant larger than 2) can achieve $E(\mbox{FP}(\widehat\beta))\rightarrow 0$ regardless of the sparsity growth rate; and vise versa, to guarantee that $E(\mbox{FP}(\widehat\beta))\rightarrow 0$ uniformly over all possible range of sparsity, the estimator is at best sub-optimal. 

\begin{figure}[htp]
 \begin{center}
  \includegraphics[width=13cm]{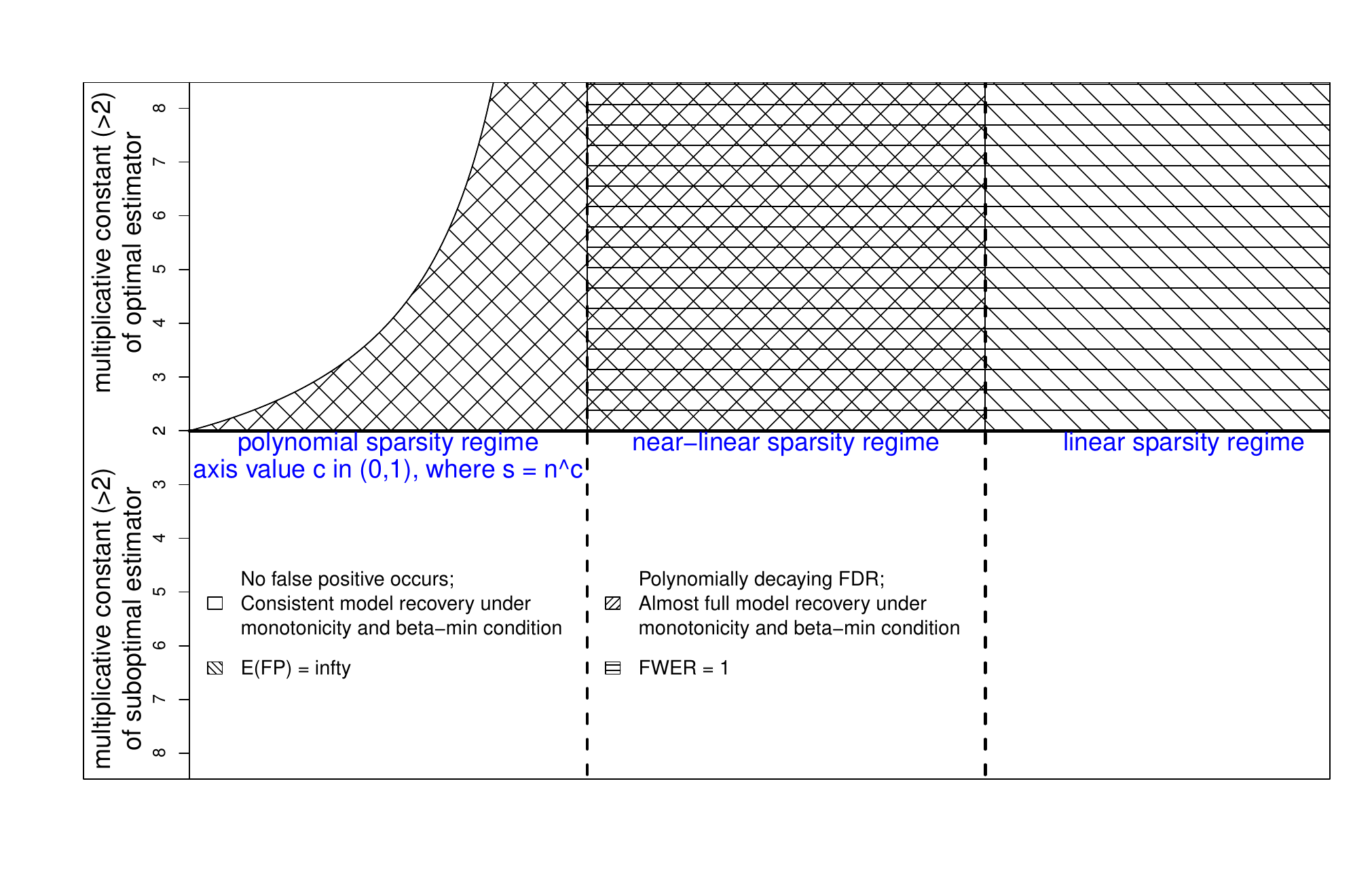}\vskip -0.4in
  \caption{Best possible false discovery control and model selection performance of rate optimal/suboptimal estimators with multiplicative constants greater than 2, under normal means models. }\label{smpplot}
 \end{center}
\end{figure}

Figure \ref{smpplot} aims to visually demonstrate the minimax false discovery control behavior for rate optimal and rate suboptimal estimators under different sparsity regimes. The un-shaded area corresponds to rate optimal estimators with multiplicative constant $c_1>2/(1-\alpha)$ under polynomial sparsity $s_n=O(n^\alpha)$, and rate suboptimal estimators with multiplicative constant $c_1>2$. It represents the regime where one can achieve zero false discovery, as  $\inf_{\widehat\beta}\sup_\beta E(\mbox{FP}(\widehat\beta))=O(n^{-c})$ for some positive $c>0$. Note that this also implies that $\inf_{\widehat\beta}\sup_\beta \mbox{FDR}\rightarrow 0$ and $\inf_{\widehat\beta}\sup_\beta \mbox{FWER}\rightarrow 0$. In opposite, the $\squarebslash$ shaded area corresponds to rate optimal estimators with  multiplicative constant $c_1<2/(1-\alpha)$ under polynomial sparsity or rate optimal estimators under near-linear and linear sparsity, and represents the cases where $E(\mbox{FP}(\widehat\beta))\rightarrow \infty$. Nested within the $\squarebslash$ shaded area, the $\squareslash$ shaded area indicates where rate optimal estimators can achieve polynomially (with respect to sparsity ratio) decaying FDR, despite diverging number of false positives.

Note that zero false positive, i.e., $E(\mbox{FP}(\widehat\beta))\rightarrow 0$, is not equivalent to that family-wise error rate (FWER) $P(\mbox{FP}(\widehat\beta)>0)\rightarrow 0$. So, it is not clear whether suboptimal rate is a necessity for zero FWER control.
Our next Theorem~\ref{impossibility}, which establishes the relationship between FWER rate and convergence speed, confirms it. 

\begin{theorem}\label{impossibility}
For any estimator $\widehat\beta$, we have
\begin{equation}\label{minimax}
\sup_{\beta\in\mB(n,s_n)}P_{\beta}(\mbox{FP}(\widehat\beta)>0)\geq 1-\delta_n- \frac{\sup_{\beta\in\mB(n,s_n)}E_{\beta}\|\widehat\beta-\beta\|^2}{Cs_n\log n}
\end{equation}
for some absolute constant $C$, where $\delta_n\rightarrow 0$ as $n, s_n\rightarrow\infty$.
\end{theorem}

There are two messages delivered by Theorem \ref{impossibility}: 1) to ensure that FWER converges to 0, the estimator is, at best, of the rate $s_n\log n$; and 2) to obtain a convergence rate that is strictly better than $s_n\log n$, one must sacrifice its FWER control in the sense that $\sup_{\beta\in\mB(n,s_n)}P_{\beta}(\mbox{FP}(\widehat\beta)>0)\rightarrow 1$ must hold.

In the end, we summarize the differences between rate optimal and rate suboptimal estimators in terms of false discovery behavior under the nearly-linear and linear sparsity regimes. Within these sparsity regimes, rate suboptimal estimators, at best, can achieve 
$$\sup_{\beta\in\mB(n,s_n)}P_\beta(\mbox{FP}(\widehat\beta)>0)\rightarrow0\;\mbox{and}\;\sup_{\beta\in\mB(n,s_n)}E_\beta(\mbox{FP}(\widehat\beta))\rightarrow0,$$ and vise versa, to achieve such false discovery control, the estimator must be sub-optimal. In contrast, rate optimal estimators $\widehat\beta$ must satisfy that 
\begin{eqnarray}\label{dic}
\sup_{\beta\in\mB(n,s_n)}P_\beta(\mbox{FP}(\widehat\beta)>0)\rightarrow 1\;\mbox{and}\;\sup_{\beta\in\mB(n,s_n)}E_\beta(\mbox{FP}(\widehat\beta))\rightarrow \infty,
\end{eqnarray}
which is reflected by the $\squarehslash$ shaded area in Figure \ref{smpplot}.

The theoretical results in this section (visualized by Figure \ref{smpplot}) carefully depict the gap of false discovery control performance between rate optimal and rate suboptimal estimators. This performance gap increases as true model gets denser. Under polynomial sparsity regime, such a gap can be filled by 
increasing the multiplicative constant of rate minimax estimator, but under near-linear and linear sparsity regimes, this gap becomes inevitable.

Note that result (\ref{dic}) clearly asserts that, under the worse case of $\beta$, rate optimal estimators always select some false discoveries. At first glance, (\ref{dic}) seems to imply that rate optimal estimators is never selection consistent (since consistent selection requires zero false positive selection and zero false negative selection). However, the relationship between rate minimaxity and selection inconsistency is more subtle, and will be elaborated later in Section \ref{impo2}.

\subsection{Linear Regression Models}
In this section, we are interested in generalizing the results derived under normal means models to regression models
$$y=X\beta + \varepsilon,$$ where the design matrix $X\in\BR^{n\times p_n}$ and $\varepsilon\sim N(0, I_n)$.
It is reasonable to assume that the design matrix is asymptotically standardized, in the sense that all column norms in the design matrix converges to $\sqrt n$ uniformly in probability, such that
\begin{itemize}
 \item[C0:] for any $c_3>0$, $P_0(c_3,n):=\min_{1\leq i\leq p_n}P(|\|x_i\|^2/n-1|\leq c_3)\rightarrow 1$, where $x_i$ donates the $i$th column of $X$.
\end{itemize}
Given any function $f(n,p_n,s_n)$, we define the following class of estimators:
$$\Omega_f^l=\left\{\widehat\beta(X,y): \sup_{\beta\in\mB(p_n,s_n)}E_XE_{\beta}\|\beta-\widehat \beta\|^2\leq  s_nf(n,p_n,s_n)/n\right\},$$ where
$E_\beta$ denotes the expectation with respect to $y$ conditional on the random design matrix $X$ under the true parameter $\beta$, and $E_X$ denotes the expectation over the distribution of $X$.

Intuitively, spurious correlation among the columns of design matrix $X$ increases the chance of selecting false positives. However, in the study of the lower bound of $\mbox{FP}(\widehat\beta)$, we shall consider the easiest scenario for false discovery control, i.e., the design matrix are almost orthogonal. In this case, the linear regression problem approximately reduces to normal means models. Hence, we obtain a similar lower bound result for minimax false positive selections as in Theorem \ref{impossibility3}. 
\begin{theorem}\label{impossibility4}
Under the condition (C0), the following non-asymptotic lower bound for the false positive selections holds:
\begin{equation}\label{lowreg}
\begin{split}
&\inf_{\widehat\beta\in\Omega_f^l}\sup_{\beta\in\mB(p_n,s_n)}E_XE_{\beta}(\mbox{FP}(\widehat\beta))\geq (p_n-s_n+1)\times\\
&\sup_{c_2>1,c_3>0}\left\{P_0(c_3,n)\Phi\left(\Phi^{-1}\left({P_0(c_3,n)-1/c_2}\right)-\{(1+c_3)c_2f(n,p_n,s_n)\}^{1/2}\right)\right\}.
\end{split}
\end{equation}
Asymptotically, if $\lim\sup f(n,p_n,s_n)=\kappa<\infty$, then as $n$ and $p_n$ diverge to infinity,
\[\inf_{\widehat\beta\in\Omega_f^l}\sup_{\beta\in\mB(p_n,s_n)}E_XE_{\beta} [{\rm FP}(\widehat\beta)] \geq H(\kappa)\times(1-\zeta)p_n,\]
where $H(\cdot)$ is the same function specified in Theorem \ref{impossibility3};
if $\lim f(n,p_n,s_n)=\infty$, then 
\[\inf_{\widehat\beta\in\Omega_f^l}\sup_{\beta\in\mB(p_n,s_n)}E_XE_{\beta} [{\rm FP}(\widehat\beta)] \geq C(1-\zeta)p_n\exp\{-[1+o^+(1)]f(n,p_n,s_n)/2\},\]
where $C$ is some absolute constant.
\end{theorem}

This non-asymptotic lower bound is derived by assuming the estimator can have access to some information about the unknown quantity $z_i=y-X_{-i}\beta_{-i}$, where $X_{-i}$ denotes the design matrix without the $i$th column and $\beta_{-i}$ denotes the true $\beta$ without the $i$th entry. In practice, this is impossible unless the design matrix possesses strong orthogonality. For instance, when the design matrix is exactly orthogonal, i.e., all the columns in $X$ are norm-$\sqrt n$ and orthogonal, the information about $z_i$ can be accessed via $y^Tx_i(= z_i^Tx_i)$.  Thus, we can construct the following non-adaptive estimator: $$\widehat \beta_i = \frac{y^Tx_i}{n}1(|y^Tx_i|^2/n>(1-o^+(1))f(n,p_n,s_n))$$ for some $o^+(1)$ term. By the same argument in Theorem \ref{upper1}, we have that the following upper bound for minimax rate which matches the lower bound in Theorem \ref{impossibility4}.

\begin{theorem}\label{upper2}
Under the exactly orthogonal design, if $\lim\sup f(n,p_n,s_n)=\kappa<\infty$, then as $n$ and $p_n$ diverge to infinity,
\[\inf_{\widehat\beta\in\Omega_f^l}\sup_{\beta\in\mB(p_n,s_n)}E_XE_{\beta} [{\rm FP}(\widehat\beta)] = O(p_n);\]
if $\lim f(n,p_n,s_n)=\infty$, then as $n$ and $p_n$ diverge to infinity,
\[\inf_{\widehat\beta\in\Omega_f^l}\sup_{\beta\in\mB(p_n,s_n)}E_XE_{\beta} [{\rm FP}(\widehat\beta)] \leq Cp_n\exp\{-[1-o^+(1)]f(n,p_n,s_n)/2\},\]
where $C$ is some absolute constant.
\end{theorem}


The exactly orthogonal design is usually of less interest, since it essentially reduces to normal means models and eliminates the possibility of that $p_n\gg n$. Thus, beyond the exactly orthogonal design, another popular near-orthogonal design (which will be used to match the lower bound in Section \ref{fest}) is the Gaussian design, i.e.,
\begin{itemize}
 \item[C1:] all entries in the design matrix $X$ are i.i.d $N(0,1^2)$.
\end{itemize}
We will show later that, along with certain dimension restriction, the lower bound of minimax rate can also be attained in probability, under condition (C1).

Similar to our study in Section \ref{nmm}, we will consider the following two choices of $f$ (by replacing $n$ with $p_n$):
$$f(n,p_n,s_n)\propto\log(p_n/s_n)\;\;\mbox{and}\;\;f(n,p_n,s_n)\propto\log p_n.$$
For the former,  we define a collection of rate minimax estimators as $$\Omega_o^l(c_1)=\left\{\widehat\beta: \sup_{\beta\in\mB(p_n,s_n)}E_XE_{\beta}\|\beta-\widehat \beta\|^2\leq c_1 s_n\log(p_n/s_n)/n\right\}$$ for linear regression problems.  

The following Corollary directly follows from Theorems \ref{impossibility4} and \ref{upper2}.
\begin{corollary}\label{impossibility4-2}
Under condition (C0), the following asymptotic result holds,
\[
\begin{split}
&\inf_{\widehat\beta\in\Omega_o^l(c_1)}\sup_{\beta\in\mB(p_n,s_n)}E_XE_{\beta} [{\rm FP}(\widehat\beta)] \geq
Cp_n,\quad\mbox{if } \lim(s_n/p_n)=\zeta>0;\\
&\inf_{\widehat\beta\in\Omega_o^l(c_1)}\sup_{\beta\in\mB(p_n,s_n)}E_XE_{\beta} [{\rm FP}(\widehat\beta)] \geq
Cp_n(s_n/p_n)^{c_1/2+o(1)},\quad\mbox{if } \lim (s_n/p_n)=0,
\end{split}
\]
for some constant $C$. These lower bounds can be attained under exactly orthogonal design.
\end{corollary}

Note that the above two rates can be uniformly expressed as $p_n(s_n/p_n)^{c_1/2+o(1)}$ (if $\lim s_n/p_n>0$, then $p_n(s_n/p_n)^{c_1/2+o(1)}\asymp p_n$ as well).
In Section \ref{fest}, we will show that under the Gaussian design condition (C1) and $s_n\log(p_n/s_n)\prec n^{1/2}$, if $c_1>2$, there exists an $L_0$-based penalized adaptive estimator whose number of false discoveries is bounded by $p_n(s_n/p_n)^{\tau}$ for any $\tau<c_1/2$ with dominating probability. Therefore, we claim that $p_n\left({s_n}/{p_n}\right)^{(c_1+o(1))/2}$ is also the minimax rate for number of false discoveries of rate minimax estimators under Gaussian random design. Similarly to our discussions in Section \ref{nmm}, there also exists a clear trade-off between estimation accuracy and false discovery control via the multiplicative constant $c_1$. 

Similar technique of Theorem \ref{minimaxfdr} leads to the following lower bound for the minimax false discovery rate for linear regression models. 
\begin{theorem}\label{minimaxfdr2}
Under condition (C0), we have the following asymptotic lower bound on FDR,
\[
\inf_{\widehat\beta\in\Omega_o^l(c_1)} \sup_{\beta\in\mB(p_n,s_n)}\mbox{FDR}(\widehat\beta)\geq C (s_n/p_n)^{[(c_1-2+o^+(1))/2]},
\]
where $C$ is some absolute constant.
\end{theorem}

Theorem \ref{sharp} in Section \ref{fest} shows that under (C1), $c_1>2$ and $s_n\log(p_n/s_n)\prec n^{1/2}$, an adaptive estimator $\widehat\beta\in\Omega_o^l(c_1)$ achieves that
$\sup_{\beta\in\mB(p_n,s_n)}\mbox{FDR}(\widehat\beta)\leq C (s_n/p_n)^{\tau}$ for any $\tau<c_1/2-1$, therefore:

\begin{theorem}[Immediate implication of Theorem \ref{sharp}]
Under Gaussian design (C1), if $c_1>2$ and $s_n\log(p_n/s_n)\prec n^{1/2}$
\[
\inf_{\widehat\beta\in\Omega_o^l(c_1)} \sup_{\beta\in\mB(n,s_n)}\mbox{FDR}(\widehat\beta)\leq C (s_n/p_n)^{[(c_1-2-o^+(1))/2]},
\]
where $C$ is some absolute constant.
\end{theorem}

Combining the above two theorems, we claim that
\begin{equation}\begin{split}\label{minimaxfdrlinear}
\inf_{\widehat\beta\in\Omega_o^l(c_1)}\sup_{\beta\in\mB(p_n,s_n)} \mbox{FDR}(\widehat\beta)\asymp (s_n/p_n)^{[(c_1-2+o(1))/2]},
\end{split}\end{equation}
under the Gaussian design condition (C1) and $s_n\log(p_n/s_n)\prec n^{1/2}$.

On the other hand, when considering  $f(n,p_n,s_n)\propto\log p_n$ (i.e., sub-optimal rate) and the corresponding collection of all rate suboptimal estimators $$\widetilde\Omega_o^l(c_1)=\left\{\widehat\beta: \sup_{\beta\in\mB(p_n,s_n)}E_XE_{\beta}\|\beta-\widehat \beta\|^2\leq c_1 s_n\log(p_n)/n\right\},$$ we obtain the following corollary, based on Theorems \ref{impossibility4} and \ref{upper2}:

\begin{corollary}\label{impossibility4-3}
Under the condition (C0), the following asymptotic lower bound holds,
\begin{equation}
\begin{split}
&\inf_{\widehat\beta\in\widetilde\Omega_o^l(c_1)}\sup_{\beta\in\mB(p_n,s_n)}E_XE_{\beta} [{\rm FP}(\widehat\beta)] \geq
C(1-\zeta)p_n^{1-c_1/2+o(1)},
\end{split}
\end{equation}
for some $C>0$. This lower bound can be attained under exactly orthogonal design.
\end{corollary}

When $c_1>2$, the above lower bound reduces to zero. Such a zero-false-discovery result is also achievable by rate suboptimal estimators under Gaussian design. For instance, \cite{Wainwright2009} showed that, under Gaussian design (C1), the LASSO estimator yields no false positive with dominating probability, when $s_n\log p_n=o(n)$ and its tuning parameter $\lambda^2 \geq {4\log p_n/n}$. We also find that a similar result holds for the $L_0$ penalized estimator with penalty $\gamma\|\beta\|_0\log p_n$, if $s_n\log p_n = o(\sqrt{n})$ and $\gamma>2$ (refer to Lemma A.5 in the supplementary material). 

For the completeness of this study, a parallel result of Theorem \ref{impossibility} is established below for Gaussian regression models.


\begin{theorem}\label{impossibility2}
Under the condition (C1), any estimator $\widehat\beta$ must satisfy that
\begin{equation}\label{minimax2}
 \sup_{\beta\in\mB(p_n,s_n)}P_{\beta}(\mbox{FP}(\widehat\beta)>0)\geq 1-\frac{\sup_{\beta\in\mB(p_n,s_n)}E_XE_{\beta}\|\widehat\beta-\beta\|^2}{Cs_n\log p_n/n}-\delta_n,
\end{equation}
for some absolute constant $C$, where $\delta_n$ is some sequence that converges to 0 when $n, s_n, p_n\rightarrow\infty$, $\limsup s_n/p_n\leq 1$ and $\log^2p_n\prec n$.
\end{theorem}

In conclusion, false discovery control behavior and rate of convergence are interdependent for regression estimators under the Gaussian random design. Especially when the sparsity is nearly linear or linear, only the rate suboptimal estimators can asymptotically achieve zero false discovery. In the opposite, optimal estimators, under the worst-$\beta$ case, can always yield false positive selections.

\section{Rate Minimaxity and Selection Consistency}\label{impo2}

The results in Section \ref{impo} show that there essentially exists an incompatibility between rate minimaxity and false discovery elimination. In this section, our goal is to understand the more subtle incompatibility relation between rate minimaxity and selection consistency. For simplicity of theoretical analysis, we only focus on normal means models in this section, and show that under certain monotone regularity, selection consistency and rate minimaxity do not coexist. More specifically, within the near-linear sparsity regime, rate minimax estimator at best achieves almost full sparsity structure recovery.

We first want to clarify that the incompatibility between rate minimaxity and zero false discovery does not imply the incompatibility between rate minimaxity and selection consistency. In the statistical literature, the studies of false discovery control and model selection are usually conducted over two different parameter spaces. The former considers all the $s_n$-sparse $\beta\in\mB(n,s_n)$, while the latter often imposes an additional beta-min condition, in the sense that all nonzero $\beta_j$'s are strictly larger than some positive threshold value which may depend on $n$, $p_n$ and $s_n$. Therefore, a rate optimal estimator, which yields false discoveries when the magnitude of true $\beta$ is small, may still be consistent for model selection when the magnitude of true $\beta$ is large enough.

Here is one simple example to illustrate our claim above. Define an estimator that selects the top $s$ $y_i$'s in terms of their absolute values: $\widehat\beta_j=y_j1(|y_j|\geq |y|_{(s)})$. Here, $s$ is the true sparsity value, and $|y|_{(i)}$ is the (descending) order statistic of $|y_i|'s$. This minimax estimator commits false positive selections with high probability when nonzero $\beta_j$ is relatively small, say of an order $o(\sqrt{\log n})$. However,  when all nonzero $|\beta_j|$'s are greater than $\sqrt{c\log n}$ for some $c>2$, it asymptotically selects the true model.
Another example that does not rely on the unknown true sparsity $s$ is
\begin{equation}\label{exp}
 \begin{split}
\widehat\beta(y) = \underset{\beta}{\mathrm{argmin}}\left[\|y-\beta\|^2+ pe(|\{j: |\beta_j|^2<\gamma\log n\}|)\right]
 \end{split}
\end{equation}
with penalty function $pe(k)=\gamma\sum_{i=1}^k \log(n/i)$ for constant $\gamma> 2$, where $|A|$ denotes the number of elements in set $A$.
Note that by the similar arguments used in the proof of Theorem \ref{SCmean}, we can show that estimator (\ref{exp}) has asymptotic convergence rate of $\sqrt{cs_n\log(n/s_n)}$ with $c\approx\gamma$, but is selection consistent when all nozero $\beta_j$ is greater than $\sqrt{\gamma'\log n}$ where the constant $\gamma'>\gamma$.

Despite the above examples, numerous popular estimators, as mentioned in the introduction section, indeed fail to achieve selection consistency and rate optimality simultaneously over all sparsity regimes. This motivates us to explore the rationales behind this phenomenon and further investigate
that under what general conditions, selection consistency and rate minimaxity indeed are incompatible.

The asymptotic behavior under Hamming loss \citep{ButuceaST2017} reveals that the boundary case for  selection consistency is that when all active $|\beta_i| \approx \omega_1 = \sqrt{2\log(n-s_n)}$, while in contrast our theory indicates that the worst case for rate minimax estimation is when all active $|\beta_i| \approx \omega_2 = \sqrt{c_1\log(n/s_n)}$ where $c_1$ is the multiplicative constant of the $L_2$ convergence rate. Note that under near-linear or linear sparsity, $\omega_1\gg\omega_2$.
Thus, the focus of selection consistent estimators is on the parameter subspace where all nonzero $\beta_i$'s satisfy beta-min condition which must be greater than boundary value $\omega_1$ (i.e., large signal case), but rate optimal estimators give more attention to the parameter subspace where all nonzero $\beta_i$'s are around $\omega_2$ (i.e., weak signal case). 
Therefore, a rate minimax and selection consistent estimator must be 1) rate-$\sqrt{s_n\log(n/s_n)}$ under weak signals, but 2) rate-$\sqrt{s_n}$ under strong signals, since consistent selection reduces it to an oracle parametric problem (Refer to  \cite{ndaoud2019interplay} for more discussions about this phase transition phenomenon).
In other words, this estimator must be adaptive to signal strength and behave differently under strong and weak signal cases. Let us take estimator (\ref{exp}), which is rate minimax and selection consistent, as a heuristic example. Under strong signal situation, say all nonzero $|\beta_i|\gg \sqrt{\gamma\log n}$ and they have been all successfully selected in the model, then false discovery occurs only if $|y_i|\geq\sqrt{\gamma\log n}$ for some $i\notin\xi(\beta)$ where $\xi(\beta)$ denotes the true sparse model. On the other hand, under weak signal situation, say $|y_i|\approx|\beta_i|\approx \sqrt{\gamma\log(n/s_n)}$ for all nonzero $\beta_i$'s and they are already included in the model, then  roughly speaking, the occurrence of false discovery only requires $|y_i|\geq\sqrt{\gamma\log [n/(s_n+1)]}$ for some $i\notin\xi(\beta)$. Therefore, this estimator indeed has different false discovery control mechanisms under different signal strength.

Conversely, an estimator shall not be able to achieve selection consistency and rate minimaxity at the same time if it ignores the differences between strong and weak signal cases. One concrete example is that when the estimator $\widehat\beta$ always obeys ``monotonicity'' regardless of the signal strength. Given two data vectors $y$, $z\in\BR^p$, we call that $y$ majorizes $z$ if $\mbox{sign}(y_i)\mbox{sign}(z_i)\geq 0$ and $|y_i|\geq|z_i|$ for all $i=1,\dots,p$. We call that an estimator $\widehat\beta(y)$ is {\it monotone} if $\widehat\beta(y)$ majorizes $\widehat\beta(z)$ providing that $y$ majorizes $z$. Intuitively, the monotonicity means that a larger $y$, in terms of absolute values, leads to a larger $\widehat\beta(y)$. The monotonicity trivially implies that $\xi(\widehat\beta(y))\supseteq\xi(\widehat\beta(z))$ if $y$ majorizes $z$, where $\xi(\widehat\beta)$ denotes the selected model induced by estimator $\widehat\beta$.
In other words, a larger $y$ yields a larger selected model. From such a point of view, monotonicity seems a natural and appealing property for estimators of normal means models. 
The above two examples, estimators $\widehat\beta_j=y_j1(|y_j|\geq |y|_{(s)})$ and (\ref{exp}), are both non-monotone. To see this, let's define two data vectors $y=(a_1,a_2,0,\dots,0)^T$ and $z=(a_1,a_3,0,\dots,0)^T$ where $a_i$'s satisfy $a_3^2>\gamma \log n> a_1^2>a_2^2 > \gamma(\log n+\log(n/2))/2$. Although data vector $z$ majorizes $y$, it is easy to see that $\xi(\widehat\beta(y))=\{1,2\}$ and $\xi(\widehat\beta(z))=\{2\}$ for the estimator (\ref{exp}), i.e., larger data values actually yield a smaller selected model. 

We now discuss why the property of monotonicity leads to the incompatibility between selection consistency and rate minimaxity.
A simple intuition is that a larger $\beta$ generates larger data (in terms of absolute value), and by monotonicity, it usually leads to more false discoveries. To be more specific, given any two parameter values $\beta^{(1)}$ and $\beta^{(2)}$ of the same sparsity such that $\beta^{(2)}$ majorizes $\beta^{(1)}$, then with reasonable probability, the data $y^{(2)}$ generated under $\beta^{(2)}$ majorizes the data $y^{(1)}$ generated under $\beta^{(1)}$. A  monotone estimator hence satisfies that $\xi(\widehat\beta(y^{(2)}))\supset\xi(\widehat\beta(y^{(1)}))$, i.e., $\widehat\beta(y^{(2)})$ selects no less false discoveries than $\widehat\beta(y^{(1)})$.
On the other hand, if $\widehat\beta$ is rate minimax, it must commit false positive selections for some sparse $\beta^{(1)}$ (due to our results in the previous section), and as well as for any $\beta^{(2)}$ that majorizes $\beta^{(1)}$ (due to the above arguments of monotonicity). If we let $\beta^{(2)}$'s magnitude to be sufficiently large and satisfy the beta-min condition, the selection inconsistency occurs.

Formally, given any $\zeta\in(0,1]$ and a positive function $t(n,s_n)$, we define a class of selection consistent estimators $$\Omega_1=\{\widehat\beta(\cdot): \lim_{n}P_{\beta^{(n)}}[\{\xi(\widehat\beta(y))=\xi(\beta^{(n)})]=1\},$$ 
for any sequence of $\beta^{(n)}\in \mB(n, s_n)$ satisfying $\limsup s_n/n\leq \zeta$ and $\|\beta^{(n)}\|_{\min}\geq t(n,s_n)$, where $t(n,s_n)$ represents the required beta-min condition. Similarly, define another class of estimators which ensure no false discovery
$$\Omega_2=\{\widehat\beta(\cdot): \lim_n P_{\beta^{(n)}}[\mbox{FP}(\widehat\beta(y))=0]=1\},$$ for any sequence of $\beta^{(n)}\in \mB(n,s_n) \mbox{ with } \limsup s_n/n\leq \zeta$. Now, we are ready to state the following lemma. 

\begin{lemma}\label{lem1}
If a monotone estimator $\widehat\beta(\cdot)\in\Omega_1$, then 
$\widehat\beta(\cdot)\in\Omega_2$.
\end{lemma}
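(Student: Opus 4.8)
The plan is to show that for \emph{every} sequence $\beta^{(n)}\in\mB(n,s_n)$ with $\limsup s_n/n\le\zeta$ one has $P_{\beta^{(n)}}(\mbox{FP}(\widehat\beta)>0)\to 0$. Write $S=\xi(\beta^{(n)})$, $y=\beta^{(n)}+\varepsilon$, and $b_n=\max_j|\beta^{(n)}_j|$. The key mechanism is monotonicity: I will dominate $y$, in the majorization order, by a data vector $\widehat y$ distributed as if it were generated by a ``saturated'' mean vector $\beta^{(r)}$ supported on $S$ whose nonzero entries all have the \emph{same large magnitude} $B_n\to\infty$. Since such a $\beta^{(r)}$ meets the beta-min requirement of $\Omega_1$ for $n$ large, selection consistency forces $\xi(\widehat\beta(\widehat y))=S$ with probability $\to 1$; monotonicity then gives $\xi(\widehat\beta(y))\subseteq\xi(\widehat\beta(\widehat y))$, which leaves no room for a false positive of $\widehat\beta(y)$. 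The one delicate point is that majorization constrains signs, so the signs of $\beta^{(r)}$ on $S$ must match the \emph{observed} signs $\rho=(\mathrm{sign}(y_j))_{j\in S}$; the argument is therefore run conditionally on $\rho$.

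Concretely, introduce an independent copy $\varepsilon^{\ast}\sim N(0,I_n)$ of the noise and put $B_n:=\max\{2b_n,(400\log n)^{1/2},t(n,s_n)\}$. For each sign pattern $r\in\{\pm1\}^{S}$ let $\beta^{(r)}$ be the vector with $\beta^{(r)}_S=rB_n$ and $\beta^{(r)}_{S^c}=0$, and set $\widehat y^{(r)}:=\beta^{(r)}+\varepsilon'$ where $\varepsilon'_{S^c}=\varepsilon_{S^c}$ and $\varepsilon'_S=\varepsilon^{\ast}_S$. Two facts do the work. First, $\widehat y^{(r)}$ is a function of $(\varepsilon_{S^c},\varepsilon^{\ast}_S)$ only, while $\{\rho=r\}$ is a function of $\varepsilon_S$; by independence, $\widehat y^{(r)}\sim N(\beta^{(r)},I_n)$ even conditionally on $\{\rho=r\}$, and $\beta^{(r)}\in\mB(n,s_n)$ with $\|\beta^{(r)}\|_{\min}=B_n\ge t(n,s_n)$. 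Second, on the event $G:=\{\max_{j\in S}|\varepsilon_j|\le B_n/10\}\cap\{\max_{j\in S}|\varepsilon^{\ast}_j|\le B_n/10\}$, the vector $\widehat y^{(r)}$ majorizes $y$ on $\{\rho=r\}$: the coordinates outside $S$ agree, and for $j\in S$ one checks $|\widehat y^{(r)}_j|\ge 9B_n/10\ge b_n+B_n/10\ge|y_j|$ while $\mathrm{sign}(\widehat y^{(r)}_j)=r_j=\mathrm{sign}(y_j)$. A standard Gaussian tail bound gives $P(G^c)\le 4s_n\Phi(-(4\log n)^{1/2})\to 0$.

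Assembling: on $G\cap\{\rho=r\}$, monotonicity yields $\xi(\widehat\beta(y))\subseteq\xi(\widehat\beta(\widehat y^{(r)}))$, hence $\{\mbox{FP}(\widehat\beta(y))>0\}\cap G\cap\{\rho=r\}\subseteq\{\xi(\widehat\beta(\widehat y^{(r)}))\ne S\}\cap\{\rho=r\}$. Summing over the finitely many $r$ and using the independence noted above,
\[
P_{\beta^{(n)}}\big(\mbox{FP}(\widehat\beta(y))>0\big)\ \le\ P(G^c)+\sum_{r}P(\rho=r)\,P_{\beta^{(r)}}\big(\xi(\widehat\beta)\ne S\big)\ \le\ P(G^c)+\max_{r}P_{\beta^{(r)}}\big(\xi(\widehat\beta)\ne S\big).
\]
Because each $\beta^{(r)}$ lies in $\mB(n,s_n)$ and obeys the beta-min condition with threshold $t(n,s_n)$, applying the defining property of $\Omega_1$ to the sequence that picks, for each $n$, the worst sign pattern (a legitimate sequence, and the maximum over finitely many $r$ is attained) shows $\max_r P_{\beta^{(r)}}(\xi(\widehat\beta)\ne S)\to 0$. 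Together with $P(G^c)\to 0$ this gives $P_{\beta^{(n)}}(\mbox{FP}(\widehat\beta)>0)\to 0$, i.e. $\widehat\beta\in\Omega_2$.

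The step I expect to require the most care is the construction of $\widehat y^{(r)}$, which must do two things at once: majorize $y$ — forcing the signs of $\beta^{(r)}$ to track the random pattern $\rho$ and forcing $B_n$ to dominate both $b_n$ and the noise fluctuations on $S$ — and, conditionally on $\{\rho=r\}$, have \emph{exactly} the law $N(\beta^{(r)},I_n)$, so that the hypothesis $\widehat\beta\in\Omega_1$ can be invoked verbatim. This is the reason a fresh noise copy $\varepsilon^{\ast}_S$ is used on $S$ in place of the original $\varepsilon_S$ (which, conditioned on $\rho$, would be a truncated Gaussian). The remaining ingredients — the tail bound for $P(G^c)$ and the elementary majorization inequalities on $G$ — are routine.
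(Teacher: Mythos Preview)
Your proof is correct and takes a genuinely different route from the paper's. The paper argues by contradiction: assuming $\widehat\beta\notin\Omega_2$, it fixes a bad sequence $\beta^{(n)}$ and, one coordinate at a time, replaces each small-but-nonzero entry by a value of magnitude exceeding $t(n,s_n)$; the core step is that for a monotone estimator the cross-sections $K_n^c(\lambda)=\{y_{-i}:(\lambda,y_{-i})\in K_n^c\}$ shrink as $|\lambda|$ grows, and a case analysis (on whether certain tail integrals vanish as $\lambda\to\pm\infty$) shows that enlarging $|\beta^{(n)}_i|$ cannot increase $P_{\beta}(K_n^c)$. No auxiliary randomness is introduced; everything is done at the level of the original law. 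Your argument is instead direct and coupling-based: you replace the noise on $S$ by a fresh Gaussian copy and set the signal on $S$ to $\pm B_n$ with signs matching the observed $\mathrm{sign}(y_S)$, so that on a high-probability event $\widehat y^{(r)}$ majorizes $y$ and, crucially, has \emph{exactly} the law $N(\beta^{(r)},I_n)$ conditionally on the sign pattern --- allowing $\Omega_1$ to be invoked verbatim for the worst-sign sequence $(\beta^{(r_n^\ast)})_n$. Your approach makes both the domination step and the appeal to selection consistency transparent and avoids the paper's coordinate-wise limit analysis, at the price of an extra random element and a sum over $2^{|S|}$ sign patterns; the paper's approach stays within the original probability space but needs the more delicate monotone-section argument.
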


Therefore, if a monotone estimator is selection consistent under some beta-min condition, then it asymptotically never yields false discoveries regardless of the magnitude of $\beta$. Due to the result of Theorem 2.3 (i.e., rate minimax estimators always yield false discovery under near-linear or linear sparsity), we claim that it must not be rate minimax. In other words, Lemma 3.1 rigorously establish the incompatibility between rate minimaxity and selection consistency for monotone estimators, {especially under the dense signal regime}. We note that almost all of the popular estimators used in the literature are monotone. For instance, if a separable penalty function, i.e., $pe(\beta)=\sum_{i=1}^n p(\beta_i)$ for some function $p$, is used in the normal means models, then the monotonicity of estimators is equivalent to that the thresholding function $S(y_i):=\arg\min_{\beta_i}(y_i-\beta_i)^2+p(\beta_i)$ is monotone. It is easy to figure out that the thresholding function will be monotone, as long as that $p(\cdot)$ is symmetric and
non-negative, $p(0)=0$, and $p$ is monotone on $(0,\infty)$. Therefore, almost all penalty functions proposed in the literature, including LASSO, non-concave penalties \cite{Zhang2010, FanL2001}, $L_0$ penalty and reciprocal penalty \cite{SongL2014_2}, lead to monotone estimators. Note that FDR estimator \cite{AbramovichBDJ2006} and SLOPE estimator \cite{SuC2016} also belong to the class of monotone estimators.




We now understand that a monotone rate optimal estimator, under near-linear or linear sparsity, cannot ensure model selection consistency even when beta-min condition holds. A natural follow-up question is: what is the best model selection result that a monotone rate optimal estimator $\widehat\beta$ can achieve. For type I error control, the results in Section \ref{nmm} prove that, if $s_n\prec n$, then $\mbox{minimax }E(\mbox{FP})\asymp n(s_n/n)^c  \asymp s_n(s_n/n)^{c-1}$ for some $c\approx c_1/2$; if $s_n\asymp n$, then $\mbox{minimax }E(\mbox{FP})\asymp n \asymp s_n$. As for the type II error control, we conduct a rather trivial analysis as follows. When the true parameter $\beta$ satisfies a strong beta-min condition\footnote{Note that a sharper beta-min condition, that guarantees that rate optimal estimators yield zero false negative, can be further investigated but is beyond the scope of this work.} such that $\|\beta\|_{\min}\succ \sqrt{s_n\log(n/s_n)}$, it is easy to verify that $\mbox{FN}(\widehat\beta)\rightarrow 0$,  where the convergence holds in $L_1$ and in probability, since $E\|\widehat\beta-\beta\|^2=O(s_n\log(n/s_n))$. Combining the above discussions regarding type I and type II error control, the best a monotone rate minimax estimator can possibly do, in terms of model selection under the above beta-min condition, is that: 1) under polynomial sparsity, both $\mbox{FP}(\widehat\beta)$ and $\mbox{FN}(\widehat\beta)$ converge to 0 in probability, and thus $\widehat\beta$ recovers the exact sparsity structure; 2) under near-linear sparsity, $[\mbox{FP}(\widehat\beta)+\mbox{FN}(\widehat\beta)]/s_n=|\xi(\widehat\beta)\bigtriangleup\xi(\beta)|/s_n\rightarrow 0$ in probability, where $\bigtriangleup$ denotes the symmetric difference of two sets. That is, $\widehat\beta$ almost fully recovers the sparsity structure \cite{ButuceaST2017}; 3) Under linear sparsity, it can only ensure that 
$\xi(\widehat\beta)\supsetneq\xi(\beta)$ and $|\xi(\widehat\beta)|/|\xi(\beta)|\leq (1+\delta)$ for some $\delta>0$.
Cases 1) and 2) correspond to the unshaded area and $\squareslash$ shaded area in Figure \ref{smpplot}, respectively.




\begin{remark}
 The concept of monotonicity directly applies to the linear regression models where the columns of the design matrix are $\sqrt n$-norm and orthogonal. Under orthogonal design, $y=X\beta+\varepsilon$ can be re-written as $X^Ty=\beta+X^T\varepsilon$. Thus, we call that an estimator $\widehat\beta(X,y)$ is monotone if $\widehat\beta$ is monotone with respect to $X^Ty$ (i.e., for any two $y^{(1)}$ and $y^{(2)}\in\BR^n$, $\widehat\beta(X,y^{(1)})$ majorizes $\widehat\beta(X,y^{(2)})$ when $X^Ty^{(1)}$ majorizes $X^Ty^{(2)}$). Hence, our previous discussions and arguments also hold for linear regression models under the orthogonal design. However, under more general regression settings, due to the possible column dependencies within $X$, this monotonicity concept cannot be easily generalized. More discussions on the incompatibility between rate minimaxity and selection consistency under regression model can be found in Section B of the supplementary material.
\end{remark}

\section{Adaptive Optimal Penalized Estimator}\label{fest}
In this section, we  will construct {\em adaptive} rate minimax estimators that are capable of achieving the lower bounds of false positive selection and false discovery rate derived in Section \ref{impo}. Therefore, the polynomial decay of the false discovery control is attainable. Our construction applies to both normal means models and linear regression models under Gaussian design.
It is worth emphasizing that these estimators merely serve as the ``proof of concept'', justifying that these lower bounds are achievable by adaptive estimators. This work doesn't promote the use of these estimators in practice because it involves NP-hard optimizations.

We consider a class of estimators based on $L_0$ selection criterion: $$\widehat\beta =\underset{\beta}{\mathrm{argmin}}\|y-X\beta\|^2+pe(\|\beta\|_0),$$ where the penalty $pe(\cdot)$ only depends on the $L_0$ norm of $\beta$, and $X=I$ for normal means models.
Equivalently, we first search a subset model which minimizes the following criterion:
$$\widehat\xi=\underset{\xi\subset\{1,\dots,p_n\}}{\mathrm{argmin}}\mbox{RSS}(\xi)+pe(|\xi|),$$ where $\mbox{RSS}(\xi)$ is the residual sum of squares under model $\xi$, i.e., $$\mbox{RSS}(\xi):=y^T(I-X_\xi(X_{\xi}^TX_{\xi})^{-1}X_{\xi}^T)y.$$ Then the estimator $\widehat \beta$ will be the OLS estimation based on this selected model $\widehat\xi$.

The particular penalty function we consider in this section is
$$pe(k) = \gamma\sum_{i=1}^k\log(p_n/i)\;\;\mbox{if}\;k\leq \widetilde p_n\;\;\mbox{and}\;\;pe(k)=\infty\;\;\mbox{if}\;k>\widetilde p_n$$  for some user-specific parameter $\widetilde p_n$ and $\gamma$. Therefore, the estimator reduces to
\begin{equation}\label{SCest}
\widehat\beta=\underset{\|\beta\|_0< \widetilde p_n}{\mathrm{argmin}} \|y-X\beta\|^2+\gamma\sum_{i=1}^{\|\beta\|_0}\log(p_n/i).
\end{equation}
A trivial choice for $\widetilde p_n$ is $\min(p_n, n)$. Unlike the classic BIC and AIC criteria,  this penalty function is not proportional to the number of selected variables. Instead, it assigns smaller penalty for adding one more covariate into the current model when the current model size is larger. This form of penalty has been extensively used in the literature  \cite{GeorgeFoster2000,TibshiraniK1999, WuZ2013,FosterS1999,BirgeM2001}. For example,
\cite{WuZ2013} established sharp minimaxity of estimation when $\gamma=2$ and $\widetilde p_n = n/\log n$ under the normal means models.
More generally, \cite{BirgeM2001} studied a large class of selection penalization including (\ref{SCest}). These existing results mostly focused on the convergence rate of $\|\widehat\beta-\beta^{(n)}\|$ or the risk $\|X\widehat\beta-X\beta^{(n)}\|$. 
Other than these, in this section, we will also focus on its selection behavior, especially its false discovery control behavior.

\begin{remark}
The penalty used in (\ref{SCest}) is closely related to the B-H estimator \cite{BenjaminiH1995}. Under the normal means model, as pointed out by \cite{AbramovichBDJ2006}, B-H estimator, which targets level $q$ FDR, is equivalent to the some local optimum of penalized likelihood with penalty $\sum_{l=1}^{\|\beta\|_0}[\Phi^{-1}(1-q l/2p_n)]^2$. This penalty approximately equals to $\gamma\sum_{i=1}^{\|\beta\|_0}\log(p_n/i)$ with $\gamma=2$, since $[\Phi^{-1}(1-q l/2p_n)]^2\sim 2\log(p_n/l)-2\log q-\log\log(p_n/ql)$. Another related work is the SLOPE estimator \cite{SuC2016,BogdanvSSC2015}, which can be viewed as a soft thresholding FDR penalization.
\end{remark}


For the sparse means model, the next theorem shows that the $\sum_{i=1}^{k}\log(p/i)$ penalty induces a rate minimax estimator,
as well as polynomial decay of false discovery control.
\begin{theorem}\label{SCmean}
Consider the estimator (\ref{SCest}) under the normal means model (i.e., $X=I$) with $\beta^{(n)}\in\mB(n,s_n)$ and $\limsup s_n/n\leq \zeta\in[0,1)$. If the tuning parameters $ \widetilde p_n\in[s_n,n]$ and $\gamma$ is sufficiently large, then
  the following properties  hold with dominating probability,
 \begin{equation*}
 \|\widehat\beta-\beta^{(n)}\|^2\leq (\gamma+\delta) s_n\log(n/s_n) \mbox{ and }
\end{equation*}
\begin{equation*}
 \frac{\mbox{FP}(\widehat\beta)}{\mbox{TP}(\widehat\beta)}\leq \left(\frac{\mbox{TP}(\widehat\beta)}{n}\right)^{\gamma'}\leq (s_n/n)^{\gamma'} 
 \mbox{ if } \mbox{TP}(\widehat\beta)>0; \quad \mbox{FP}(\widehat\beta)=0 \mbox{ if } \mbox{TP}(\widehat\beta)=0,
\end{equation*}
 where $\delta$ and $\gamma'$ are some positive constants.
Furthermore, if $s_n\prec n$ and tuning parameters satisfy $s_n\leq\widetilde p_n\prec n$ and $\gamma>2$, then the above results asymptotically
hold for any $\delta>0$ and $0<\gamma'\leq (\gamma-2)/2$.
\end{theorem}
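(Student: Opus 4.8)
The plan is to exploit the explicit structure of the estimator under the identity design. Since $X=I$ and $p_n=n$, the minimizer (\ref{SCest}) keeps exactly the coordinates carrying the $\tilde k$ largest values of $|y_i|$, where $\tilde k\in[0,\tilde p_n]$ is a global minimizer of $S'(k)=\sum_{l>k}|y|_{(l)}^2+\gamma\sum_{l=1}^k\log(n/l)$, with $|y|_{(1)}\ge|y|_{(2)}\ge\cdots$ the order statistics of $|y|$. Comparing $S'(\tilde k)$ with $S'(\tilde k\pm1)$ gives the KKT-type inequalities $|y|_{(\tilde k)}^2\ge\gamma\log(n/\tilde k)$ and $|y|_{(\tilde k+1)}^2\le\gamma\log(n/(\tilde k+1))$ whenever $0<\tilde k<\tilde p_n$ (the boundary cases are treated directly); in particular every selected coordinate has $|y_i|^2\ge\gamma\log(n/\tilde k)$ and every unselected one has $|y_i|^2\le\gamma\log(n/(\tilde k+1))$. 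All estimates are carried out on a single high-probability event $\mathcal{E}$ collecting standard Gaussian facts: (i) a uniform upper bound $\varepsilon_{(l),N}^2\le(2+o(1))\log(n/l)$ for $1\le l\le\tilde p_n$ on the order statistics of $|\varepsilon|$ restricted to the null set $N=\{i:\beta^{(n)}_i=0\}$ (and, in the dense regime, its sharper quantile version $\varepsilon_{(l),N}^2\le(1+o(1))(\Phi^{-1}(1-l/(2|N|)))^2$); (ii) $\sum_{i\in\xi^*}\varepsilon_i^2\le(1+o(1))s^*$, with $s^*=\|\beta^{(n)}\|_0$ and $\xi^*=\xi(\beta^{(n)})$; and (iii) a uniform binomial upper tail for the counts $R(m):=\#\{i\in N:\varepsilon_i^2\ge\gamma\log(n/m)\}\le(1+o(1))\,2|N|\,\Phi(-\sqrt{\gamma\log(n/m)})$ simultaneously over $m\le\tilde p_n$. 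A routine union bound gives $P(\mathcal{E})\to1$.

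Next I would bound the selected size. By (i), for $k>s^*$ the $k$-th largest $|y|$ is dominated by the $(k-s^*)$-th largest null noise, so $S'(k)-S'(k-1)\ge\gamma\log(n/k)-(2+o(1))\log(n/(k-s^*))$; once $k\ge(1+\rho)s^*$ this lower bound is positive for every $\gamma>2$ as long as $\log(n/k)$ is bounded below, i.e. $k\le n/C_0$. When $\tilde p_n\prec n$ this already forces $S'$ to be increasing on $[(1+\rho)s^*,\tilde p_n]$, hence $\tilde k\le(1+\rho)s^*\le 2s_n$; for a general $\tilde p_n\le n$ the remaining range $[n/C_0,\tilde p_n]$ is closed by a direct comparison $S'(k)>S'(s^*)$, which is where the hypothesis ``$\gamma$ sufficiently large'' is consumed (it dominates the bulk of $\|\varepsilon\|^2$). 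Thus in all cases $\tilde k\le 2s_n$, and $\tilde k=o(n)$ when $s_n\prec n$.

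The false-discovery bound then follows by a self-consistency argument. Every false positive is a null coordinate with $\varepsilon_i^2\ge\gamma\log(n/\tilde k)$, so $\mathrm{FP}\le R(\tilde k)$ on $\mathcal{E}$. Since $\gamma>2$, (iii) gives $R(m)=o(m)$ for $m=o(n)$, so plugging $\tilde k=\mathrm{TP}+\mathrm{FP}\le\mathrm{TP}+R(\tilde k)$ and $\tilde k\le2s_n$ into itself yields first $\mathrm{FP}\le\mathrm{TP}$ (hence $\tilde k\le2\,\mathrm{TP}$) and then, using the sharp normal tail $\Phi(-x)\le\phi(x)/x$,
\[
\frac{\mathrm{FP}(\widehat\beta)}{\mathrm{TP}(\widehat\beta)}\le\frac{R(2\,\mathrm{TP})}{\mathrm{TP}}\le(1+o(1))\,2^{\gamma/2}\,\frac{(\mathrm{TP}/n)^{\gamma/2-1}}{\sqrt{\gamma\log(n/\mathrm{TP})}}\le\Big(\frac{\mathrm{TP}}{n}\Big)^{\gamma'}
\]
for any $0<\gamma'\le(\gamma-2)/2$, the factor $\sqrt{\log(n/\mathrm{TP})}\to\infty$ absorbing the constant and the slack in the exponent (in the general case one takes $\gamma'$ small and lets ``$\gamma$ large'' do the bookkeeping). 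If $\mathrm{TP}=0$ then $\tilde k=\mathrm{FP}\le R(\tilde k)\le R(2s_n)$, which the same estimate forces to $0$.

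For the $L_2$ bound I would split $\|\widehat\beta-\beta^{(n)}\|^2=\sum_{i\in\widehat\xi}\varepsilon_i^2+\sum_{i\in\xi^*\setminus\widehat\xi}(\beta^{(n)}_i)^2$. The first term is at most $\sum_{i\in\xi^*}\varepsilon_i^2+\sum_{l\le\mathrm{FP}}\varepsilon_{(l),N}^2\le(1+o(1))s^*+\sum_{l\le\mathrm{FP}}(2+o(1))\log(n/l)$, which by the false-discovery bound is $(1+o(1))s^*+o(s_n\log(n/s_n))$. For the missed-signal term, use the defining inequality $\mathrm{RSS}(\widehat\xi)+pe(\tilde k)\le\mathrm{RSS}(\xi^*)+pe(s^*)$; cancelling the common null residuals gives $\sum_{i\in\xi^*\setminus\widehat\xi}(\beta^{(n)}_i+\varepsilon_i)^2+pe(\tilde k)\le\sum_{i\in N\cap\widehat\xi}\varepsilon_i^2+pe(s^*)$, and applying $(a+b)^2\ge(1-\eta)a^2-\eta^{-1}b^2$ together with the elementary bound $pe(s^*)-pe(\tilde k)\le pe(s^*)=\gamma\sum_{l\le s^*}\log(n/l)\le\gamma\,s^*\log(n/s^*)+\gamma s^*$ and the two estimates just derived yields $\sum_{i\in\xi^*\setminus\widehat\xi}(\beta^{(n)}_i)^2\le\frac{\gamma}{1-\eta}(1+o(1))\,s^*\log(n/s^*)$. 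Letting $\eta\downarrow0$ slowly and using $s^*\le s_n$ with monotonicity of $x\mapsto x\log(n/x)$ gives $\|\widehat\beta-\beta^{(n)}\|^2\le(\gamma+\delta)s_n\log(n/s_n)$ for any $\delta>0$ when $s_n\prec n$, and for a fixed $\delta$ (depending on $\gamma,\zeta$) in general. The main obstacle is the false-discovery step: the threshold $\gamma\log(n/\tilde k)$ depends on the random data-dependent size $\tilde k$, so the binomial control must hold uniformly and be routed through the self-consistency argument, and isolating the exact exponent $(\gamma-2)/2$ forces the use of the genuine Gaussian tail rather than a sub-Gaussian surrogate; this is also the step that interacts most delicately with controlling $\tilde k$ for $k$ near $n$ in the linear-sparsity case, where no $o(1)$ simplifications are available and the constant ``$\gamma$ sufficiently large'' must be tracked explicitly.
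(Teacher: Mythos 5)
Your proposal is correct in substance and shares the paper's overall architecture --- every step ultimately rests on comparing the penalized criterion at $\widehat\xi$ against a nearby model ($\xi^*$ for the size and $L_2$ bounds, a submodel of $\widehat\xi$ for the false-positive bound) together with Gaussian concentration --- but your false-positive step uses a genuinely different mechanism. The paper removes the false positives $\xi_2=\widehat\xi\setminus\xi^*$ en bloc: from $\mbox{SC}(\xi_1)\geq\mbox{SC}(\xi_1\cup\xi_2)$ it deduces $\sum_{i\in\xi_2}\varepsilon_i^2\geq\gamma\sum_{i=|\xi_1|+1}^{|\xi_1|+|\xi_2|}\log(n/i)$ and contradicts this with a chi-square/union bound $\sum_{i\in\xi_2}\varepsilon_i^2\leq(2+\delta)|\xi_2|\log(n/|\xi_2|)$, directly ruling out $|\xi_2|\geq\max\{1,|\xi_1|(|\xi_1|/n)^{\gamma'}\}$. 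You instead exploit exact orthogonality to characterize $\widehat\xi$ as top-$\tilde k$ thresholding with data-driven threshold at least $\{\gamma\log(n/\tilde k)\}^{1/2}$, count null exceedances $R(\tilde k)$, and close with the self-consistency fixed point $\tilde k\leq\mbox{TP}+R(\tilde k)$. Your route makes the exponent $(\gamma-2)/2$ completely transparent (it is just $2\Phi(-\{\gamma\log(n/m)\}^{1/2})\asymp(m/n)^{\gamma/2}$ against the null fraction), but it is tied to the identity design and would not transfer to Theorem \ref{sharp}, whereas the paper's block comparison generalizes there via Lemma \ref{diff}. Two points to tighten. First, your event (iii) as stated, $R(m)\leq(1+o(1))\,2|N|\,\Phi(-\{\gamma\log(n/m)\}^{1/2})$ uniformly in $m$, is too strong when the expectation is bounded (for $\gamma>2$ and small $m$ it tends to $0$, so no multiplicative bound can hold); what you actually need, and can prove, is the uniform tail bound that $P\bigl(R(m)\geq\lceil m(m/n)^{\gamma'}\rceil\mbox{ for some }m\leq 2s_n\bigr)\to0$ via $\binom{n}{r}\bigl(2\Phi(\cdot)\bigr)^{r}$, which also settles the $\mbox{TP}=0$ case. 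Second, in the general regime $\limsup s_n/n\leq\zeta$ with $\tilde p_n$ up to $n$, your ``direct comparison $S'(k)>S'(s^*)$ for $k\geq n/C_0$'' requires $\gamma$ large relative to both $C_0$ and $1/\log(1/\zeta)$; this is exactly the bookkeeping the paper isolates in condition (\ref{gamma3}) of Lemma \ref{SCthm}, and it is worth writing out since it is the source of the unquantified constants $\delta$ and $\gamma'$ in the first claim.
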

This result implies that in probability, the $L_2$ convergence of $\widehat\beta$ is bounded by minimax rate with a multiplicative constant $\gamma+\delta$, and that the number of false discoveries of $\widehat\beta$ is bounded by $s_n(s_n/n)^{\gamma'}=n(s_n/n)^{\gamma'+1}$, which matches the lower bound for the minimax number of false discoveries presented in Section \ref{nmm}, except for the polynomial degree. Furthermore, when  $s_n\prec n$  and the hyperparameter $\widetilde p_n$ is chosen such that $s_n\leq\widetilde p_n\prec n$, the polynomial degree can be as large as 
$\gamma'+1=\gamma/2= c_1/2-\delta/2$, for any arbitrarily small constant $\delta$. Compared with the polynomial degree of the minimax result in Corollary \ref{impossibility3-2}, we claim that this estimator (\ref{SCest}) achieves the minimax rate for the number of false discoveries, in probability.

Theorem~\ref{SCmean} also implies that the \mbox{FDP} is bounded by $(s_n/n)^{\gamma'}/[(s_n/n)^{\gamma'}+1]\leq(s_n/n)^{\gamma'}$ in probability. As for the false discovery rate, we note that, the phrase ``with  dominating probability'' in the theorem statement means ``with probability as least $1-\exp\{Cs_n\log(n/s_n)\}$ for some $C>0$'' (as shown in the proof in the supplementary material). Hence, we have 
\[
\mbox{FDR}=E(\mbox{FDP})\leq (s_n/n)^{\gamma'}+\exp\{-Cs_n\log(n/s_n)\}\asymp
\begin{cases}
 (s_n/n)^{\gamma'} & \mbox{if }1\prec s_n\prec n\\
 \mbox{constant} & \text{if } s_n\asymp n
\end{cases}
.
\]
Since $\gamma'$ can be sufficient close to (but larger than) $c_2/2-1$ when $s_n\leq \tilde p_n\prec n$,
the false discovery rate of the estimator (\ref{SCest}) is rate minimax.
In conclusion, the rate optimal estimator (\ref{SCest}) achieves the best possible type I error control rate, in both the number of false discoveries and the false discovery rate.

For this estimator, the value of $\gamma$ plays a role of balancing the rate of convergence and the rate of false discovery control when $\widetilde p_n\prec n$. A larger $\gamma$ leads to a larger polynomial degree $\gamma'$, but at the expense 
of a greater multiplicative constant in the convergence rate. In other words, the trade-off between false discovery control and estimation accuracy is tuned through the choice of $\gamma$.


As stated in the next theorem, similar results hold for Gaussian design regression models.
\begin{theorem}\label{sharp}
Consider the estimator (\ref{SCest}) for Gaussian design regression models with $\beta^{(n)}\in\mB_0(p_n,s_n)$, $s_n\log(p_n/s_n)\prec n$ and $\limsup s_n/p_n\leq \zeta\in[0,1)$. If we choose the tuning parameter $\widetilde p_n\in[s_n \min(n,p_n)]$ and $\gamma$ to be a sufficiently large constant, then the following results hold with dominating probability:
 \begin{equation}\label{r1}
 n\|\widehat\beta-\beta^{(n)}\|^2\leq (\gamma+\delta)s_n\log(p_n/s_n),
\mbox{ and }
 \mbox{FP}(\widehat\beta)\leq \delta's_n,
\end{equation}
for some positive constants $\delta$ and $\delta'$. If furthermore, $s_n^2\log^2(p_n/s_n)\prec n$, then
\begin{equation}\label{r2}
 \frac{\mbox{FP}(\widehat\beta)}{\mbox{TP}(\widehat\beta)}\leq \left(\frac{\mbox{TP}(\widehat\beta)}{p_n}\right)^{\gamma'}\leq (s_n/p_n)^{\gamma'}
 \mbox{ if } \mbox{TP}(\widehat\beta)>0; \quad \mbox{FP}(\widehat\beta)=0 \mbox{ if } \mbox{TP}(\widehat\beta)=0,
\end{equation}
 hold with dominating probability for some constant $\gamma'$.
 
 More importantly, under the strictly sparse setting, i.e., $s_n\prec p_n$, if we choose 
 $s_n\leq\widetilde p_n\prec p_n$ and $\gamma>2$, then all previous results hold  asymptotically in probability
 for any positive $\delta$, $\delta'>0$ and $0<\gamma'\leq (\gamma-2)/2$. Note that it also implies that 
 estimator $\widehat\beta$ is almost sharply minimax when $\gamma$ is sufficiently close to 2 (refer to the discussion in below).
\end{theorem}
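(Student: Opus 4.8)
The plan is to run a residual‑sum‑of‑squares comparison argument that parallels the proof of Theorem~\ref{SCmean}, the only genuinely new ingredient being a uniform near‑isometry property of the Gaussian design. Write $\xi^\star=\xi(\beta^{(n)})$ for the true model, $\widehat\xi$ for the model selected by (\ref{SCest}), and $P_\xi$ for the projection onto the column span of $X_\xi$. The starting point is the basic inequality coming from optimality of $\widehat\xi$: since $|\xi^\star|=s_n\le\tilde p_n$, we have $\mbox{RSS}(\widehat\xi)+pe(|\widehat\xi|)\le \mbox{RSS}(\xi^\star)+pe(s_n)$, and because $y=X_{\xi^\star}\beta^{(n)}_{\xi^\star}+\varepsilon$, the oracle residual is $\mbox{RSS}(\xi^\star)=\|(I-P_{\xi^\star})\varepsilon\|^2$, which concentrates around $n-s_n$. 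I would then condition on the event $\mathcal G$ that every submatrix with at most $\tilde p_n$ columns is a near‑isometry, $\|n^{-1}X_\xi^TX_\xi-I\|\le\rho_n$ with $\rho_n\asymp\sqrt{\tilde p_n\log(p_n/\tilde p_n)/n}=o(1)$; by the random‑matrix bounds used in the paper (e.g.\ \citep{Vershynin2012}) together with a union bound over the $\binom{p_n}{|\xi|}$ choices, $P(\mathcal G^c)\le\exp\{-Cs_n\log(p_n/s_n)\}$ under $s_n\log(p_n/s_n)\prec n$. On $\mathcal G$, projections act like coordinate projections up to the scalar $n$ and error $\rho_n$, so every RSS increment from adding a column set $A$ to a reference model $\xi$ equals, up to a $(1\pm\rho_n)$ factor and additive cross terms, $n^{-1}\|X_A^T(I-P_\xi)\,\text{(residual)}\|^2$.

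The core step is a maximal inequality. For a candidate false‑positive set $A$ with $A\cap\xi^\star=\emptyset$ and $|A|=k$, and any reference model $\xi$, the vector $X_A^T(I-P_\xi)\varepsilon$ is Gaussian with covariance bounded by $n(1+\rho_n)\|(I-P_\xi)\varepsilon\|^2 I_k$, so $n^{-1}\|X_A^T(I-P_\xi)\varepsilon\|^2$ is stochastically dominated by $(1+\rho_n)\|(I-P_\xi)\varepsilon\|^2\chi^2_k/n$; maximizing over the $\binom{p_n}{k}$ sets $A$ and over $k\le\tilde p_n$, a $\chi^2$ tail bound plus union bound shows this maximum is at most $2\sum_{l\le k}\log(p_n/l)$ up to lower‑order terms, with probability $1-\exp\{-Cs_n\log(p_n/s_n)\}$. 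Since $pe$ charges $\gamma\log(p_n/l)$ per added index and $\gamma>2$ (or $\gamma$ large), any false positives beyond a bounded multiple of $s_n$ strictly increase the objective; combined with the matching lower bound on the RSS reduction a missed true coordinate would forgo, this gives $\mbox{FP}(\widehat\beta)\le\delta's_n$, shows $\widehat\xi$ captures all but $o(s_n)$ signals, and—substituting back into the basic inequality and using near‑isometry to pass from $\|X(\widehat\beta-\beta^{(n)})\|^2$ to $n\|\widehat\beta-\beta^{(n)}\|^2$—yields (\ref{r1}). The boundary case $\mbox{TP}(\widehat\beta)=0$ forces $\widehat\xi=\emptyset$ (otherwise deleting all columns lowers the objective), hence $\mbox{FP}(\widehat\beta)=0$.

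For the sharp second assertion (\ref{r2}), having established $|\widehat\xi|=O(s_n)$, I would rerun the whole argument restricted to models of size $O(s_n)$, on which $\rho_n=O(\sqrt{s_n\log(p_n/s_n)/n})$. Here the extra hypothesis $s_n^2\log^2(p_n/s_n)\prec n$ is exactly what makes the design‑correction terms—which enter at order $\rho_n\cdot s_n\log(p_n/s_n)$ against a main term of order $s_n\log(p_n/s_n)$—negligible, so the maximal inequality can be sharpened to constant $2(1+o(1))$ instead of merely $O(1)$. The per‑index comparison then reads ``penalty $\gamma\log(p_n/l)$ versus gain $2(1+o(1))\log(p_n/l)$'', and the same counting as in Theorem~\ref{SCmean} gives $\mbox{FP}(\widehat\beta)/\mbox{TP}(\widehat\beta)\le(\mbox{TP}(\widehat\beta)/p_n)^{\gamma'}$ for any $\gamma'\le(\gamma-2)/2$, with $(\mbox{TP}/p_n)^{\gamma'}\le(s_n/p_n)^{\gamma'}$ immediate from $\mbox{TP}\le s_n$; collecting the union‑bound failure probabilities into $\exp\{-Cs_n\log(p_n/s_n)\}$ and using $\mbox{FDR}=E(\mbox{FDP})$ then gives the attached FDR statement as in Theorem~\ref{SCmean}.

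I expect the main obstacle to be controlling the cross (signal‑leakage) terms $n^{-1}X_A^T(I-P_\xi)X_{\xi^\star}\beta^{(n)}$ that arise because the design columns are only nearly, not exactly, orthogonal: these must be bounded uniformly over all pairs $(A,\xi)$ entering the comparison, and it is precisely the requirement that they stay below the leading $\log(p_n/\cdot)$ terms that forces $s_n\log(p_n/s_n)\prec n$ for (\ref{r1}) and the stronger $s_n^2\log^2(p_n/s_n)\prec n$ for the sharp (\ref{r2}). A secondary subtlety is adaptivity: since $\widehat\beta$ does not know $s_n$, all maximal inequalities must hold simultaneously over every model size $k\le\tilde p_n$, which is exactly what the cumulative form $\gamma\sum_{i\le k}\log(p_n/i)$ of the penalty is designed to absorb.
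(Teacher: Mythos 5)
Your proposal is correct and follows essentially the same route as the paper: the near-isometry event via \cite{Vershynin2012} plus union bounds is the paper's condition (C2) and Lemma \ref{SCthm}, your maximal inequality for the RSS reduction over candidate false-positive sets is exactly the content of the paper's Lemma \ref{diff}, and the final per-index comparison of penalty $\gamma\log(p_n/l)$ against gain $(2+o(1))\log(p_n/l)$ is the same counting as in Theorem \ref{SCmean}. You also correctly identify that the signal-leakage cross terms $X_A^T(I-P_{\xi})X_{\xi^*\backslash\xi}\beta^{(n)}$ are what force the stronger condition $s_n^2\log^2(p_n/s_n)\prec n$ for the sharp bound (\ref{r2}), which is precisely where the term $C_0\{s_n^2\log^2(p_n/s_n)/n\}^{1/2}$ in Lemma \ref{diff} comes from.
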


When $s_n\log(p_n/s_n)\ll n$, the estimator (\ref{SCest}) is rate minimax for Gaussian linear regression, and its number of false discoveries is bounded linearly by $\delta' s_n$. Under strict sparsity ($s_n\prec p_n$), the multiplicative constant of its convergence rate can be arbitrarily close to 2, and $\mbox{FP}/s_n$ can be arbitrarily close to 0, when $\gamma\approx 2$. As shown by Theorem 1.3 of \cite{SuC2016}, the minimax $L_2$ convergence rate for Gaussian linear regression is $\{[2+o(1)]s_n\log(p_n/s_n)/n\}^{1/2}$, thus this estimator is almost sharply minimax.

Under additional dimensionality condition $s_n^2\log^2(p_n/s_n)\prec n$, 
we can achieve polynomial control for type I errors: in probability, the number of false discoveries is bounded by $p_n(s_n/p_n)^{\gamma'+1}$, and the FDP is bounded by $(s_n/p_n)^{\gamma'}$. Given the strict sparsity setting and $s_n\leq\widetilde p_n\prec p_n$, the polynomial degree $\gamma'$ can be arbitrarily close to $c_1/2-1$, where $c_1\approx \gamma$ is the multiplicative constant of convergence rate of this estimator. Similar remarks on the rate of FDR for the Theorem \ref{SCmean} apply here as well, and we claim that the number of false discoveries and false discovery rate of estimator (\ref{SCest}) attain minimax rate under Gaussian design.

\begin{remark}
The dimensionality assumption $s_n\log(p_n/s_n)\prec \sqrt{n}$ provides stronger regularity on the orthogonality of design matrix than the commonly used assumption $s_n\log p_n\prec n$. Under this assumption, random matrix theory \cite{Vershynin2012} ensures that
the singular values of any sub-matrix of $X/\sqrt{n}$ consisting of $O(s)$ columns are within $[1-o((s\log(p/s))^{-1/2}),1+o((s\log(p/s))^{-1/2})]$, such that the existence of false negative predictors (whose true regression coefficients are up to $O(\sqrt{s\log(p/s)/n})$) doesn't induce additional false positives via the spurious correlation.
\end{remark}

\section{Conclusion and Discussion}\label{diss}

This work investigates how the rate of convergence and the selection behavior affect each other under the high dimensional setting.
There is a subtle trade-off between estimation accuracy and type I error control. This trade-off comes into effect particularly under near-linear or linear sparsity, where rate optimal estimators can yield an unbounded number of false discoveries and zero-false-discovery estimators must be suboptimal. 

In the sense of minimax (i.e., under the worse scenario), we show that the rate optimal estimators induce as many as $p_n(s_n/p_n)^{\tau}$ false positive selections, and its FDR decays as a polynomial of the sparsity ratio $s_n/p_n$. Such a polynomial rate of false discovery control can be achieved by the adaptive penalty $\gamma\sum_{i}^{k}\log(p_n/i)$.
Note that the SLOPE estimator employs a soft $L_1$ version of $\gamma\sum_{i}^{k}\log(p_n/i)$-penalization. Hence, we conjecture that the SLOPE estimator can also achieve a similar asymptotic FDP control as in our Theorem \ref{sharp}, while enjoying a much better computational efficiency than the estimator \eqref{SCest}. Readers may refer to \cite{KosB2019} for some recent theoretical investigation of the SLOPE estimator.

In this work, we only consider sparse estimators that contain exact 0 coordinates. On the other hand, shrinkage estimators, which don't contain exact zero entries, have been commonly used for sparse linear models as well, e.g., \cite{BhattacharyaPPD2015,SongL2017,VanKV2014}. A shrinkage estimator $\widehat\beta$ can also induce model selection result, via a simple truncation as $\tilde\beta_i=\widehat\beta_i1(|\widehat\beta_i|\leq \eta_{n,i})$ for some (data-dependent) $\eta_{n,i}$.  
 If these thresholding values $\eta_{n,i}$'s are sufficiently small, as in \cite{BhattacharyaPPD2015} and \cite{SongL2017}, then $\tilde\beta$ and $\widehat\beta$ have negligible difference between their convergence rates.
 Therefore, our result on the relationship between false discovery control and convergence rate of an exactly sparse estimator, can naturally extends to the relationship between false discovery control of $\tilde\beta$ and convergence rate of $\widehat\beta$ as well.


\bibliographystyle{imsart-number}
\bibliography{ref}

\end{document}


\thispagestyle{empty}
\title{Supplementary Material of ``Optimal False Discovery Control of Minimax Estimator''}
\author{Qifan Song and Guang Cheng}

\date{}

\maketitle
\appendix
\section{Proofs}
{\noindent \bf Proof of Lemma 3.1}
\begin{proof}
If a monotone selection consistent estimator $\widehat\beta(\cdot)\notin\Omega_2$, then there exists a sequence of $\beta^{(n)}\in \BR^n$, such that $\lim\sup_nP_{\beta^{(n)}}(K_n)=c>0$,
where $K_n=\{y: \xi(\widehat\beta(y))\backslash\xi(\beta^{(n)})\neq\emptyset\}\subset\BR^n$. To induce contraction, 
it is sufficient to show that there exists a sequence of $\widetilde\beta^{(n)}$, such that $\|\widetilde\beta^{(n)}\|_{\min}\geq t(n,s_n)$, $\xi(\widetilde\beta^{(n)})=\xi(\beta^{(n)})$
and $P_{\widetilde\beta^{(n)}}(K_n)\geq P_{\beta^{(n)}}(K_n)$.

For any $\beta^{(n)}$, if $0<|\beta^{(n)}_i|<t(n,s_n)$ for some index $i$, we claim that we can replace this entry by some sufficiently large value, i.e., there exists a $\check\beta^{(n)}$
such that $\check\beta^{(n)}_j=\beta^{(n)}_j$ for all $j\neq i$ and $|\check\beta^{(n)}_i|>t(n,s_n)$, such that $P_{\check\beta^{(n)}}(K_n^c)\leq P_{\beta^{(n)}}(K_n^c)$.
Therefore, the $\widetilde\beta^{(n)}$ can be constructed via sequentially replacing all small-but-nonzero entries of $\beta^{(n)}$ with large absolute value entries.

Now let us prove the existence of $\check\beta^{(n)}$. Without losing generality, let the index $i=1$ and denote $\beta^{(n)}=(\beta_1^{(n)},\beta_{-1}^{(n)})'$ where $\beta_{-1}^{(n)}\in\BR^{n-1}$.
We define set $K_n^c(\lambda)=\{y\in \BR^{n-1}: (\lambda,y)^T\in K_n^c\}$ where $\lambda\in\BR$.
By the monotonicity of estimator, if $sign(\lambda_1)=sign(\lambda_2)$, and $|\lambda_1|\leq |\lambda_2|$, then $K_n^c(\lambda_1)\supseteq K_n^c(\lambda_2)$.
First of all, if $P_{\beta^{(n)}}(K_n^c)=0$, then $K_n^c$ is a zero Lebesgue measure set, and the existence of $\check\beta^{(n)}$ is trivial.
Now we only consider  the case that $P_{\beta^{(n)}}(K_n^c)>0$. If 
 \begin{equation}\label{pos}
  \lim_{\lambda\rightarrow\infty}\int_{\lambda}^{+\infty}\int_{K_n^c(z)}\phi(z';{\beta^{(n)}_{-1}}, I_{n-1})dz' \phi(z;{\lambda},1) dz=0, 
 \end{equation}
where $\phi(\cdot;\mu,\Sigma)$ denotes the density of a (multivariate) normal distribution, then 
for any $\epsilon>0$, there exists a $\lambda_0>0$ such that 
$\int_{\lambda_0}^{+\infty}\int_{K_n^c(z)}\phi(z';{\beta^{(n)}_{-1}}, I_{n-1})dz' \phi(z;{\lambda_0},1) dz\leq \epsilon$. Now we pick 
a sufficiently large $\lambda_1>\max(\lambda_0,t(n,s_n))$ such that 
$\int_{-\infty}^{\lambda_0}\int_{K_n^c(z)}\phi(z';{\beta^{(n)}_{-1}}, I_{n-1})dz' \phi(z;{\lambda_1},1) dz
< \int_{-\infty}^{\lambda_0} \phi(z;{\lambda_1},1) dz
< \epsilon:=P_{\beta^{(n)}}(K_n^c)/3$. Furthermore, 
\begin{equation}\label{inf}
\begin{split}
 &\int_{\lambda_0}^{+\infty}\int_{K_n^c(z)}\phi(z';{\beta^{(n)}_{-1}}, I_{n-1})dz' \phi(z;{\lambda_1},1) dz\\
 =& \int_{\lambda_0}^{\lambda_1}\int_{K_n^c(z)}\phi(z';{\beta^{(n)}_{-1}}, I_{n-1})dz' \phi(z;{\lambda_1},1) dz+\int_{\lambda_1}^{+\infty}\int_{K_n^c(z)}\phi(z';{\beta^{(n)}_{-1}}, I_{n-1})dz' \phi(z;{\lambda_1},1) dz\\
 \leq& \int_{\lambda_0}^{\lambda_1}\int_{K_n^c(z)}\phi(z';{\beta^{(n)}_{-1}}, I_{n-1})dz' \phi(z;{\lambda_0},1) dz+   \int_{\lambda_0}^{+\infty}\int_{K_n^c(z)}\phi(z';{\beta^{(n)}_{-1}}, I_{n-1})dz' \phi(z;{\lambda_0},1) dz\\
 < &2\epsilon,
 \end{split}
\end{equation}
where the first inequality is due to the fact that $K_n^c(z)$ is a smaller set as $z$ increase. 
Hence, we can choose $\check\beta^{(n)}=(\lambda_1,\beta^{(n)}_{-1})$, and it satisfies $P_{\check\beta^{(n)}}(K_n^c) < \epsilon+2\epsilon =  P_{\beta^{(n)}}(K_n^c)$.

Similarly, if 
 \begin{equation}\label{neg}
  \lim_{\lambda\rightarrow-\infty}\int_{-\infty}^{\lambda}\int_{K_n^c(z)}\phi(z';{\beta^{(n)}_{-1}}, I_{n-1})dz' \phi(z;{\lambda},1) dz=0, 
 \end{equation}
we can construct the $\check\beta^{(n)}=(\lambda_1,\beta^{(n)}_{-1})$, where $\lambda_1<0$, and $|\lambda_1|$ is sufficiently large.

If both (\ref{pos}) and (\ref{neg}) fail, then we must have $K_{n+}^c=\cap_{z>0}K_n^c(z)\neq\emptyset$ and $K_{n-}^c=\cap_{z<0}K_n^c(z)\neq\emptyset$ (Note that the two limits $K_{n+}^c$ and $K_{n-}^c$ always exist since $K_n^c(z)$ becomes smaller as $|z|$ increases).
 Without losing generality, we assume that $\int_{K_{n-}^c}\phi(z';{\beta^{(n)}_{-1}}, I_{n-1})dz'\geq\int_{K_{n+}^c}\phi(z';{\beta^{(n)}_{-1}}, I_{n-1})dz':=c_K$, and it also implies that $P_{\beta^{(n)}}(K_n^c)\geq c_K$. Then,
 \begin{itemize}
     \item If $ P_{\beta^{(n)}}(K_n^c)=\int_{-\infty}^{+\infty}\int_{K_n^c(z)}\phi(z';{\beta^{(n)}_{-1}}, I_{n-1})dz' \phi(z;\beta^{(n)}_1,1) dz=c_K$, since $K_n^c(z)$ is shrinking
as $|z|$ increases, we must have that $K_n^c\backslash(\BR^+\otimes K_{n-}^c \cup \BR^-\otimes K_{n+}^c)$ is zero-measure, and $\int_{K_{n-}^c}\phi(z';{\beta^{(n)}_{-1}}, I_{n-1})dz'=\int_{K_{n+}^c}\phi(z';{\beta^{(n)}_{-1}}, I_{n-1})dz'=c_K$.
Therefore, for any $\lambda_1$ and $\check\beta^{(n)}=(\lambda_1,\beta^{(n)}_{-1})$,  we have $ P_{\check\beta^{(n)}}(K_n^c)= P_{\check\beta^{(n)}}(BR^+\otimes K_{n-}^c \cup \BR^-\otimes K_{n+}^c) =  P_{\beta^{(n)}}(K_n^c)$.
\item If $ P_{\beta^{(n)}}(K_n^c)>c_K$, then we must have that
 \begin{equation}
  \lim_{\lambda\rightarrow\infty}\int_{\lambda}^{+\infty}\int_{K_n^c(z)\backslash K_{n+1}^c}\phi(z';{\beta^{(n)}_{-1}}, I_{n-1})dz' \phi(z;{\lambda},1) dz=0.
 \end{equation}
By the same arguments of (\ref{inf}), there exists a sufficiently large $\lambda_1>0$, and $\check\beta^{(n)}=(\lambda_1,\beta^{(n)}_{-1})$ 
such that $P_{\check\beta^{(n)}}(K_n^c\backslash \mathbb{R}\otimes K_{n+1}^c)\leq P_{\beta^{(n)}}(K_n^c\backslash \mathbb{R}\otimes K_{n+1}^c)$. This further imples that $P_{\check\beta^{(n)}}(K_n^c)\leq P_{\beta^{(n)}}(K_n^c)$
 \end{itemize}

This concludes the proof.
\end{proof}

\begin{lemma}\label{nplemma}
Assume $h(x)\in[0,1]$, if $\int h(x)\phi(x;\mu_0,1)\geq a$, then $\int h(x)\phi(x;0,1)\geq \Phi[\Phi^{-1}(a)-\mu_0]$.
\end{lemma}
\begin{proof}
Let's consider two simple hypotheses $H_0: \mu = \mu_0$ and $H_1: \mu = 0$ with one observation $X\sim N(\mu,0)$, and view $h\in[0,1]$ as the testing function, such that the test size is  $\int [1-h(x)]\phi(x;\mu_0,1)\leq 1- a$, and power is  $\int [1-h(x)]\phi(x;0,1)$.

By Neyman-Pearson (N-P) Lemma, the UMP test for all size $1-a$ tests is the likelihood ratio test. (Note that the standard N-P Lemma applies to all testing functions $h\in\{0,1\}$, but its proof can easily be extend to the case of all $h\in[0,1]$.)
This likelihood ratio test is denoted by $h_{LT}=1(\phi(x;\mu_0,1)/\phi(x;0,1)\geq \Lambda)$, where $\Lambda$ satisfies $ \int [1-h_{LT}(x)]\phi(x;\mu_0,1)\leq 1- a$. It is easy to figure that $h_{LT}=1(x\geq \mu_0+\Phi^{-1}(1-a))$, and its corresponding power is $\Phi(\mu_0+\Phi^{-1}(1-a))$.

Since $h_{LT}$ has the largest power, $\int [1-h(x)]\phi(x;0,1)\leq \Phi(\mu_0+\Phi^{-1}(1-a))$, i.e.,
$\int h(x)\phi(x;0,1)\geq 1-\Phi(\mu_0+\Phi^{-1}(1-a))=\Phi(\Phi^{-1}(a)-\mu_0)$.
\end{proof}


{\noindent \bf Proof of Theorem 2.1}
\begin{proof}
We denote $\widehat\eta_i=1\{\widehat\beta_i\neq 0\}$ be the selector induced by estimator $\widehat\beta$,
$\eta_i=1\{\beta_i\neq 0\}$ and the prior $\pi_1$ be a uniform distribution on set $\mB_1=\{\beta\in \BR^n: \beta_i=0\mbox{ or }\sqrt{c_2f(n,s_n)}, \|\beta\|_0=s_n-1\}$ for some constant $c_2\geq 1$.
Let $E_{\beta}$ be the expectation over $y$ with respect to distribution $y\sim N(\beta,I)$, let $E_{\pi_1}$ be the expectation 
over $\beta$ with respect to the prior distribution $\pi_1$. 
We use subscript ``${-i}$'' to denote subset $\{1,\dots,i-1,i+1,\dots,n\}$, and we use $\pi_1(\beta_{-i}|\beta_i)$ and $\pi_1(\beta_{i})$ to denote conditional prior and marginal prior respectively.
\[\begin{split}
&\sup_{\beta\in\mB(n,s_n)}E_{\beta} \sum_{i=1}^n \widehat\eta_i1\{\eta_i=0\}\geq E_{\pi_1}E_{\beta}\sum_{i=1}^n \widehat\eta_i1\{\eta_i=0\}\\
= & \sum_{i=1}^n\int_\beta\int_y\widehat\eta_i(y)1\{\eta_i=0\}\prod_{i=j}^n\phi(y_j;\beta_j,1)dy
\pi_1(\beta)d\beta\\
= & \sum_{i=1}^n\int_{\beta_{i}}\int_{\beta_{-i}}\int_{y_i}\int_{y_{-i}}\widehat\eta_i(y)\phi(y_i;\beta_i,1)\prod_{j\neq i}^n\phi(y_j;\beta_j,1)dy_{-i}dy_i
\pi_1(\beta_{-i}|\beta_i)1\{\eta_i=0\}\pi_1(\beta_i)d\beta_{-i}d\beta_{i}\\
= & \sum_{i=1}^n\int_{\beta_{i}}\int_{\beta_{-i}}\int_{y_i}\int_{y_{-i}}\widehat\eta_i(y)\phi(y_i;\beta_i,1)\prod_{j\neq i}^n\phi(y_j;\beta_j,1)dy_{-i}dy_i
\widetilde\pi_1(\beta_{-i})1\{\eta_i=0\}\pi_1(\beta_i)d\beta_{-i}d\beta_{i}\\
= & \sum_{i=1}^n\int_{\beta_{i}}\int_{y_i}\phi(y_i;\beta_i,1)\int_{\beta_{-i}}\int_{y_{-i}}\widehat\eta_i(y)\prod_{j\neq i}^n\phi(y_j;\beta_j,1)dy_{-i}
\widetilde\pi_1(\beta_{-i})d\beta_{-i}dy_i1\{\eta_i=0\}\pi_1(\beta_i) d\beta_{i}\\
\end{split}
\]
where $\widetilde\pi_1(\beta_{-i})=\pi_1(\beta_{-i}|\beta_i=0)$, such that $\widetilde\pi_1(\beta_{-i})1\{\eta_i=0\}=
\pi_1(\beta_{-i}|\beta_i)1\{\eta_i=0\}$. Note the 
$\widetilde \pi_1$ is a uniform distribution over 
set $\mB_3=\{\beta\in \BR^{n-1}: \beta_i=0\mbox{ or }\sqrt{c_2f(n,s_n)}, \|\beta\|_0=s_n-1\}$. 

Therefore, we let $\widetilde\eta_i(y_i) =\int_{\beta_{-i}}\int_{y_{-i}}\widehat\eta_i(y)\prod_{j\neq i}^n\phi(y_j;\beta_j,\sigma^2)dy_{-i}
\widetilde\pi_1(\beta_{-i})d\beta_{-i} $, which defines an estimator for $\eta_i$ based on only $y_i$, the above inequality reduces to
\begin{equation}\label{m0}\begin{split}
\sup_{\beta\in\mB(n,s_n)}E_{\beta} \sum_{i=1}^n \widehat\eta_i1\{\eta_i=0\}\geq&
\sum_i\int_{\beta_i}\int_{y_i}\widetilde \eta_i(y_i)\phi(y_i;\beta_i,1)dy_i1\{\eta_i=0\}\pi_1(\beta_i)d\beta_i\\
 \geq&\sum_{i}(1-(s_n-1)/n)\int \widetilde\eta_i(y)\phi(y;0,1)dy,
\end{split}\end{equation}
where we use the fact $E_{\pi_1}\eta_i=(s_n-1)/n$.

On the other hand, for any $\widehat\beta\in\Omega_f=\{\widehat\beta: \sup_{\beta\in\mB(n,s_n)}E_{\beta}\|\beta-\widehat \beta\|^2\leq s_nf(n,s_n)\}$, $s_nf (n,s_n) \geq E_\beta\|\widehat\beta-\beta\|^2 \geq \sum_{i=1}^n\beta_i^2 E_\beta(1-\widehat\eta_i) $ for $\beta\in\mB(n,s_n)$,
then if all the nonzero coefficients of $\beta$ are $\{c_2f(n,s_n)\}^{1/2}$, we must have
$(1/c_2)s_n\geq \sum \eta_iE_{\beta}(1-\widehat\eta_i)=\sum \eta_i-\sum E_{\beta}\widehat\eta_i1\{\eta_i=1\}$. If we further take expectation $E_{\pi_2}$ on both side,
where $\pi_2$ is a uniform distribution
over $\mB_2=\{\beta\in \BR^n: \beta_i=0\mbox{ or }\sqrt{c_2f(n,s_n)}, \|\beta\|_0=s_n\}$,
then $(1/c_2)s_n\geq E_{\pi}(\sum \eta_i-\sum E_{\beta_2}\widehat\eta_i1\{\eta_i=1\})$.
Then by similar argument, we can derive that
\begin{equation}\label{m1}\begin{split}
&(1/c_2)s_n\geq s_n-E_{\pi}(\sum E_{\beta}\widehat\eta_i1\{\eta_i=1\})\\
=&s_n-\sum_{i=1}^n\int_{\beta_{i}}\int_{y_i}\phi(y_i;\beta_i,1)\int_{\beta_{-i}}\int_{y_{-i}}\widehat\eta_i(y)\prod_{j\neq i}^n\phi(y_j;\beta_j,1)dy_{-i}
\widetilde\pi_2(\beta_{-i})d\beta_{-i}dy_i1\{\eta_i=1\}\pi_1(\beta_i) d\beta_{i}\\
=&s_n-\sum_i(s_n/n)\int \widetilde\eta_i(y)\phi(y; \{c_2f(n,s_n)\}^{1/2},1)dy,
\end{split}\end{equation}
where 
the second equality holds since
$\widetilde\pi_2(\beta_{-i})=\pi_2(\beta_{-1}|\beta_i=1)=\pi_1(\beta_{-i}|\beta_i=0)=\widetilde\pi_1(\beta_{-i}).$

By Lemma \ref{nplemma}, subject to $\int \widetilde\eta_i(x)\phi(x; \{c_2f(n,s_n)\}^{1/2},1)\geq a_i$ for some $a_i\in[0,1]$ and $\widetilde\eta_i(\cdot)\in[0,1]$,
the following minimization holds: $\int \widetilde\eta_i(x)\phi(x; 0,1)\geq \Phi(\Phi^{-1}(a_i)-\{c_2f(n,s_n)\}^{1/2})$.
Hence subject to (\ref{m1}) (i.e., subject to $\int \widetilde\eta_i(y)\phi(y; \{c_2f(n,s_n)\}^{1/2},1)dy=a_i$ and 
$\sum a_i\geq n-n/c_2$), we have
\begin{equation}\label{m2}\begin{split}
&\sum_{i}\int \widetilde\eta_i(x)\phi(x;0,1)dx\geq \min_{\sum a_i =n- n/c_2}\sum_i\Phi(\Phi^{-1}(a_i)-\{c_2f(n,s_n)\}^{1/2})\\
\geq& n\Phi(\Phi^{-1}(1-1/c_2)-\{c_2f(n,s_n)\}^{1/2}),
\end{split}
\end{equation}
where the second inequality is due to Jensen's inequality.

Combine (\ref{m0}), (\ref{m1}) and (\ref{m2}), we have that for any $c_2>1$,
\[
\inf_{\widehat\beta\in\Omega_f}\sup_{\beta\in\mB(n,s_n)}E_{\beta} \sum_{i=1}^n \widehat\eta_i1\{\eta_i=0\}\geq (n-s_n+1)\Phi(\Phi^{-1}(1-1/c_2)-\{c_2f(n,s_n)\}^{1/2}),
\]
or equivalently, 
\begin{equation}\label{m3}
\inf_{\widehat\beta\in\Omega_f}\sup_{\beta\in\mB(n,s_n)}E_{\beta} \sum_{i=1}^n \widehat\eta_i1\{\eta_i=0\}\geq (n-s_n+1)\sup_{c_2>1}\Phi(\Phi^{-1}(1-1/c_2)-\{c_2f(n,s_n)\}^{1/2}).
\end{equation}

Asymptotically, 
\begin{enumerate}
    \item if $f(n,s_n)$ is of constant order, i.e. $\lim\sup f(n,s_n)=\kappa<\infty$, $\lim\sup s_n/n=\zeta<1$, then
    \[\begin{split}
    &\inf_{\widehat\beta\in\Omega_f}\sup_{\beta\in\mB(n,s_n)}E_{\beta} \sum_{i=1}^n \widehat\eta_i1\{\eta_i=0\}\geq H(\kappa)(n-s_n+1)\asymp H(\kappa)(1-\zeta)n,\end{split}\]
    where $H(\kappa)=\sup_{c_2>1}\Phi(\Phi^{-1}(1-1/c_2)-\{c_2\kappa\}^{1/2})$.
    \item if $f(n,s_n)$ diverges to infinity, i.e. $f(n,s_n)\rightarrow \infty$, then for any fixed $1<c_3<c_2$,
     \[\begin{split}
    &\inf_{\widehat\beta\in\Omega_f}\sup_{\beta\in\mB(n,s_n)}E_{\beta} \sum_{i=1}^n \widehat\eta_i1\{\eta_i=0\}\geq (n-s_n+1)\Phi(\Phi^{-1}[1-1/c_2]-\{c_2f(n,s_n)\}^{1/2})\\
    \geq& C(n-s_n+1)\exp\{-c_3f(n,s_n)/2\}\sim C(1-\zeta)
    n\exp\{-c_3f(n,s_n)/2\}, \end{split}\]
    for some absolute constant $C$,  where we use the inequality $\Phi(-x)\geq (1/\{2\pi\}^{1/2})\exp\{-x^2/2\}[x/(x^2+1)]$ for any $x>0$. Since $c_2$ can be arbitrarily closed 1,
    $\inf_{\widehat\beta\in\Omega_f}\sup_{\beta\in\mB(n,s_n)}E_{\beta} \sum_{i=1}^n \widehat\eta_i1\{\eta_i=0\}\geq  C(1-\zeta)n\exp\{-[1+o^+(1)]f(n,s_n)/2\}$.
\end{enumerate}
This concludes the proof.
\end{proof}

{\bf \noindent Proof of inequality (2.2)}
\begin{proof}
It is easy to see that
\begin{equation}\label{eqr1}
\begin{split}
 &\sup_{\beta\in\mB(n,s_n)}E\|\widehat\beta-\beta\|^2\\
&\leq  
\sum_{i=1}^n[Ey_i^21(\beta_i=0, |y_i|\geq \sqrt{(1-\epsilon_n) f(n,s_n)})+E(y_i-\beta_i)^21(\beta_i\neq0)\\
&\qquad+E\beta_i^21(
\beta_i\neq0, |y_i|\leq \sqrt{(1-\epsilon_n) f(n,s_n)})]\\
&< 2n\int_{\sqrt{ (1-\epsilon_n) f(n,s_n)}}^{\infty} x^2\phi(x)dx+s_n+s_n\sup_{\beta_i>0}\beta_i^2\Phi[\sqrt{ (1-\epsilon_n) f(n,s_n)}-\beta_i].\\
\end{split}
\end{equation}
For the first term on the RHS of \eqref{eqr1},
\[
\begin{split}
    &\int_{\sqrt{ (1-\epsilon_n) f(n,s_n)}}^{\infty} x^2\phi(x)dx=\frac{1}{\sqrt{2\pi}} \int_{ (1-\epsilon_n) f(n,s_n)/2}^{\infty} \exp(-t+\log(\sqrt{2t}))dt\\
   \leq  &\frac{1}{\sqrt{2\pi}} \int_{ (1-\epsilon_n) f(n,s_n)/2}^{\infty} \exp(-(1-\eta_n)t)dt=
   \frac{1}{\sqrt{2\pi}(1-\eta_n)}\exp[-(1-\eta_n)(1-\epsilon_n) f(n,s_n)/2],
\end{split}
\]
where $\eta_n=2\log(\sqrt{(1-\epsilon_n) f(n,s_n)})/(1-\epsilon_n) f(n,s_n)\rightarrow 0$.
Since $\lim\inf_n (1-\eta_n)(1-\epsilon_n) f(n,s_n)/2\log(n/s_n)\geq1$, we must have 
\begin{equation}\label{eqr2}
\begin{split}
    2n \int_{\sqrt{ (1-\epsilon_n) f(n,s_n)}}^{\infty} x^2\phi(x)dx \lesssim \frac{2}{\sqrt{2\pi}}s_n = O(s_n).
\end{split}
\end{equation}

For the term $\sup_{\beta_i>0}\beta_i^2\Phi[\sqrt{ (1-\epsilon_n) f(n,s_n)}-\beta_i]$, we note that 
\begin{equation}
    \begin{split}\label{eqr3}
        &\sup_{0<\beta_i\leq\sqrt{(1-\epsilon_n) f(n,s_n)}}\beta_i^2\Phi[\sqrt{ (1-\epsilon_n) f(n,s_n)}-\beta_i]\leq (\sqrt{ (1-\epsilon_n) f(n,s_n)})^2;\\
        &\sup_{\sqrt{(1-\epsilon_n) f(n,s_n)}<\beta_i\leq\sqrt{2(1-\epsilon_n) f(n,s_n)}}\beta_i^2\Phi[\sqrt{ (1-\epsilon_n) f(n,s_n)}-\beta_i]\leq (\sqrt{2 (1-\epsilon_n) f(n,s_n)})^2\times (1/2);\\
        &\beta_i^2\Phi[\sqrt{ (1-\epsilon_n) f(n,s_n)}-\beta_i] \mbox{ is decreasing when }\beta_i\in[\sqrt{2(1-\epsilon_n) f(n,s_n)}, \infty),
    \end{split}
\end{equation}
where the last fact is due to the exponential decay of normal tail probability.
    
Combine \eqref{eqr1}, \eqref{eqr2}, \eqref{eqr3} and the fact that $\epsilon_n\succ f(n,s_n)^{-1}$ we have that
\[
\sup_{\beta\in\mB(n,s_n)}E\|\widehat\beta-\beta\|^2\lesssim O(s_n)+ s_n + (1-\epsilon_n)s_nf(n,s_n)< s_nf(n,s_n), \mbox{ as } n\rightarrow \infty.
\]

\end{proof}

{\bf \noindent Proof of Theorem 2.3}

\begin{proof}
If $s_n/n\rightarrow \zeta >0$, the right-hand side of (2.5) asymptotically reduces to constant, hence the result is trivial. Therefore, we only consider the case that $s_n/n\rightarrow 0$.

Let's first restate the proof of non-asymptotic result of Theorem 2.1: If an estimator $\widehat\beta$ satisfies that $E(\mbox{FN}(\widehat\beta))\leq s_n/c_2$ when the true $\beta\in\mB(n,s_n)$ and its nonzero elements are $\{c_2f(n,s_n)\}^{1/2}$ with $c_2>1$, then we must have that $\sup_{\beta\in\mB(n,s_n)}E(\mbox{FP}(\widehat\beta))\geq (n-s_n+1)\Phi(\Phi^{-1}(1-1/c_2)-\{c_2f(n,s_n)\}^{1/2})$. This result will be applied later.

Given any $\widehat\beta\in\Omega_o(c_1)$, then when the true $\beta\in\mB(n,s_n)$ and its nonzero elements are $\{c_2c_1\log(n/s_n)\}^{1/2}$, we must have $E(\mbox{FN}(\widehat\beta))\leq s_n/c_2$. Now we consider a modified estimator $\widehat\beta^{(T)}$, such that $\widehat\beta^{(T)}_i=\widehat\beta_i1(|y_i|\leq t)$ for some $t =\{k\log(n/s_n)\}^{1/2}$ where $k$ will be specified later. 
When the true $\beta\in\mB(n,s_n)$ and its nonzero elements are $\{c_2c_1\log(n/s_n)\}^{1/2}$, we have that 
\[\begin{split}
    E(\mbox{FN}(\widehat\beta^{(T)}))&\leq E(\mbox{FN}(\widehat\beta))+s_n\times\int_{-t}^t\phi(x;\{c_2c_1\log(n/s_n)\}^{1/2},1)dx\\
    &\leq s_n\left[\frac{1}{c_2}+\int_{-t}^t\phi(x;\{c_2c_1\log(n/s_n)\}^{1/2},1)dx\right].
\end{split}
\]
We let $k=c_1c_2-\delta_n$ for some $\delta_n=o^+(1)$, such that $\int_{-t}^t\phi(x;\{c_2c_1\log(n/s_n)\}^{1/2},1)dx=\delta'_n=o^+(1)$ (Note that such $\delta_n$ exists due to $n/s_n\rightarrow\infty$), therefore, $E(\mbox{FN}(\widehat\beta^{(T)}))\leq s_n(1/c_2+\delta_n')$.

Now we apply the restated result of Theorem 2.1, and can obtain that
$\sup_{\beta\in\mB(n,s_n)}E(\mbox{FP}(\widehat\beta^{(T)}))\geq (n-s_n+1)\Phi(\Phi^{-1}(1-1/(c_2-\delta_n''))-c_1c_2\log(n/s_n)\}^{1/2})$
for some $\delta_n''=o^+(1)$. And for any $D$, 
\[\begin{split}
    &E\left(\frac{\mbox{FP}(\widehat\beta^{(T)})}{\mbox{FP}(\widehat\beta^{(T)})+s_n}\right)\geq E\left(\frac{\mbox{FP}(\widehat\beta^{(T)})}{\mbox{FP}(\widehat\beta^{(T)})+s_n}\right)1(\mbox{FP}(\widehat\beta^{(T)})\leq D)\\
    \geq & \frac{1}{D+s_n}
    {E(\mbox{FP}(\widehat\beta^{(T))}1(\mbox{FP}(\widehat\beta^{(T)})\leq D))}=\frac{1}{D+s_n}{[E(\mbox{FP}(\widehat\beta^{(T)}))-E(\mbox{FP}(\widehat\beta^{(T)})1(\mbox{FP}(\widehat\beta^{(T)})\geq D))]}.
\end{split}
\]
On the other hand, let $\tilde\beta$ be the hard-thresholding estimator with threshold value $t$, then
$\mbox{FP}(\widehat\beta^{(T)})\leq \mbox{FP}(\tilde\beta)$ and $E(\mbox{FP}(\widehat\beta^{(T)})1(\mbox{FP}(\widehat\beta^{(T)})\geq D))\leq E(\mbox{FP}(\tilde\beta)1(\mbox{FP}(\tilde\beta)\geq D))$.
Since $\mbox{FP}(\tilde\beta)\sim Bin(n-\|\beta\|_0, \mu_t)$, where $\mu_t=2\int_t^{\infty}\phi(x;0,1)dx=o(s_n/n)$, and let $D=s_n$, then we obtain that
\[\begin{split}
  &  E(\mbox{FP}(\tilde\beta)1(\mbox{FP}(\tilde\beta)\geq D))= (n-\|\beta\|_0)\mu_t Pr(Bin(n-1-\|\beta\|_0,\mu_t)\geq s_n-1)\\
  \leq  & n\mu_t \exp\{-s_n\log(s_n/n\mu_t)\},
\end{split}
\]
where the last inequality is due to Chernoff-Hoeffding bound of Binomial distribution.
Hence
\[\begin{split}
    &E\left(\frac{\mbox{FP}(\widehat\beta^{(T)})}{\mbox{FP}(\widehat\beta^{(T)})+s_n}\right)\geq \frac{1}{2s_n}
    {[E(\mbox{FP}(\widehat\beta^{(T)}))-n\mu_t \exp\{-s_n\log(s_n/n\mu_t)]}.
\end{split}
\]

At the end, we have
\begin{equation}\label{fdrint}
\begin{split}
   & \inf_{\widehat\beta\in\Omega_o(c_1)} \sup_{\beta\in\mB(n,s_n)}\mbox{FDR}(\widehat\beta)
     \geq \inf_{\widehat\beta\in\Omega_o(c_1)} \sup_{\beta\in\mB(n,s_n)}E\left(\frac{\mbox{FP}(\widehat\beta)}{\mbox{FP}(\widehat\beta)+s_n}\right)\\
    \geq & \inf_{\widehat\beta\in\Omega_o(c_1)} \sup_{\beta\in\mB(n,s_n)}E\left(\frac{\mbox{FP}(\widehat\beta^{(T)})}{\mbox{FP}(\widehat\beta^{(T)})+s_n}\right)\\
    \geq &\inf_{\widehat\beta\in\Omega_o(c_1)} \sup_{\beta\in\mB(n,s_n)}\frac{1}{2s_n}
    {[E(\mbox{FP}(\widehat\beta^{(T)}))-n\mu_t \exp\{-s_n\log(s_n/n\mu_t)]}\\
    \geq &\frac{1}{2s_n}
    {[(n-s_n+1)\Phi(\Phi^{-1}(1-1/(c_2-\delta_n''))-c_1c_2\log(n/s_n)\}^{1/2})-n\mu_t \exp\{-s_n\log(s_n/n\mu_t)]}.
\end{split}
\end{equation}

Taking supremum over $c_2$, and asymptotically let $n\rightarrow \infty$, we can obtains that 
\[ 
\inf_{\widehat\beta\in\Omega_o(c_1)} \sup_{\beta\in\mB(n,s_n)}\mbox{FDR}(\widehat\beta)
     \geq C\left(\frac{s_n}{n}\right)^{(c_1-2+o^+(1))/2}.
\]
with some $o(1)$ term, where $C$ is an absolute constant.

\end{proof}

\begin{lemma}[\cite{Baraud2002} or Lemma 5 of \cite{cai2014two}]\label{power}
 Consider any class of distributions $P_\theta$ with parameter space $\theta\in\mF$.
 Let $\mu_\rho$ be some probability measure on $\mF_\rho=\{\theta\in\mF, \|\theta-\theta_0\|\geq \rho\}$ and define measure 
 $P_{\mu_\rho}=\int P_\theta d\mu_\rho(\theta)$. Assuming that $P_{\mu_\rho}$ is absolutely continuous with respect to $P_0$,
 we define $L_{\mu_{\rho}}(y)=dP_{\mu_\rho}(y)/dP_0$. For all $\alpha>0$, $\nu\in[0, 1-\alpha]$, if $E_{\theta_0}\{L^2_{\mu_{\rho^*}}(Y)\}
 \leq 1+4(1-\alpha-\nu)^2$ for some $\rho^*>0$ and some distribution $\mu_{\rho^*}$, then
 \[
 \forall 0\leq \rho\leq \rho^*,\quad \inf_{\Phi_\alpha}\sup_{\theta\in\mF_\rho} P_\theta(\Phi_\alpha=0)\geq \nu, 
 \]
 where $\Phi_\alpha$ is the set of $\alpha$-level test, such that $P_{\theta_0}(\Phi_\alpha=0)=1-\alpha$.
\end{lemma}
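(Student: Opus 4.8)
The plan is to run the classical Bayesian reduction to two simple hypotheses together with a second-moment (chi-square) control of the likelihood ratio -- the Ingster--Suslina device. I would fix $\rho\ge\rho^*$ and an arbitrary $\alpha$-level test $\Phi_\alpha$ with $P_0(\Phi_\alpha=0)=1-\alpha$, and first pass from the composite alternative to the mixture: since $\mu_\rho$ is a probability measure supported on $\mF_\rho$,
\[
\sup_{\theta\in\mF_\rho}P_\theta(\Phi_\alpha=0)\ \ge\ \int P_\theta(\Phi_\alpha=0)\,d\mu_\rho(\theta)\ =\ P_{\mu_\rho}(\Phi_\alpha=0)\ =\ E_0\big[L_{\mu_\rho}(Y)\,1\{\Phi_\alpha(Y)=0\}\big],
\]
the last identity being the change of measure via $L_{\mu_\rho}$. (For a randomized test with rejection function $\phi$ one replaces $1\{\Phi_\alpha=0\}$ by $1-\phi$ and nothing changes.)

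Next I would bound the mixed type II error below by $(1-\alpha)$ minus the total-variation distance between mixture and null. For any event $A$, $|P_{\mu_\rho}(A)-P_0(A)|\le\|P_{\mu_\rho}-P_0\|_{TV}$; taking $A=\{\Phi_\alpha=0\}$ and using $P_0(\Phi_\alpha=0)=1-\alpha$ gives $\sup_{\theta\in\mF_\rho}P_\theta(\Phi_\alpha=0)\ \ge\ (1-\alpha)-\|P_{\mu_\rho}-P_0\|_{TV}$. Then I would convert TV into chi-square through the elementary chain $\|P_{\mu_\rho}-P_0\|_{TV}=\tfrac12 E_0|L_{\mu_\rho}-1|\le\tfrac12\big(E_0[(L_{\mu_\rho}-1)^2]\big)^{1/2}=\tfrac12\big(E_0[L_{\mu_\rho}^2]-1\big)^{1/2}$, using $E_0[L_{\mu_\rho}]=1$ and Cauchy--Schwarz. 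Under $E_0[L_{\mu_\rho}^2]\le 1+4(1-\alpha-\nu)^2$ and $1-\alpha-\nu\ge 0$ this yields $\|P_{\mu_\rho}-P_0\|_{TV}\le 1-\alpha-\nu$, hence $\sup_{\theta\in\mF_\rho}P_\theta(\Phi_\alpha=0)\ge(1-\alpha)-(1-\alpha-\nu)=\nu$, and taking the infimum over $\alpha$-level tests finishes the estimate.

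Finally I would deal with the quantifier: the hypothesis is posed at $\rho^*$ while the display above uses $\mu_\rho$ for the given $\rho\ge\rho^*$. This is covered by how the family $\{\mu_\rho\}$ is built -- it is arranged so that the chi-square bound at $\rho^*$ is inherited for every $\rho\ge\rho^*$ (e.g. $\mu_{\rho^*}$ is itself supported on $\mF_\rho$, or $\rho\mapsto E_0[L^2_{\mu_\rho}]$ is monotone) -- and needs no further testing-theoretic input. The one place where care is essential is keeping the sharp constant in $\|\cdot\|_{TV}\le\tfrac12\sqrt{\chi^2}$: a crude Cauchy--Schwarz without the factor $\tfrac12$ would lose a factor of two and invalidate the conclusion, which is exactly why the second-moment hypothesis carries the constant $4$. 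I expect the genuine difficulty in using this lemma to live elsewhere, namely in exhibiting a prior $\mu_{\rho^*}$ on $\mF_{\rho^*}$ whose $E_0[L^2_{\mu_{\rho^*}}]$ can be controlled; the lemma itself is a short, essentially mechanical consequence once that is in hand.
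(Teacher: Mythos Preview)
The paper does not actually prove this lemma; it simply attributes it to \cite{Baraud2002}. Your argument is correct and is exactly the standard second-moment (Ingster--Baraud) reduction: pass to the mixture, lower-bound the mixed type~II error by $(1-\alpha)-\|P_{\mu_\rho}-P_0\|_{TV}$, and control total variation via $\|P_{\mu_\rho}-P_0\|_{TV}\le\tfrac12(E_0[L_{\mu_\rho}^2]-1)^{1/2}$.

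One small caveat on the quantifier you discuss at the end: the inclusion ``$\mu_{\rho^*}$ is itself supported on $\mF_\rho$'' that you invoke actually requires $\rho\le\rho^*$, not $\rho\ge\rho^*$, because $\mF_\rho=\{\|\theta\|\ge\rho\}$ is \emph{decreasing} in $\rho$. So the direction $\forall\rho\ge\rho^*$ in the paper's statement is almost certainly a typo for $\forall\rho\le\rho^*$ (which is also how Baraud states it). This is immaterial for the paper's applications in Theorems~\ref{impossibility} and~\ref{impossibility2}, which use only the case $\rho=\rho^*$.
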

Lemma \ref{power} studies the relationship between effect size $\rho$ and power $1-P_\theta(\Phi_\alpha = 0)$ for the hypothesis testing problem $H_0: \theta=\theta_0$. This lemma is useful for proving Theorem 2.5.

\vskip 0.1in
{\noindent \bf Proof of Theorem 2.5}
\begin{proof} 
For any estimator $\widehat\beta$, we assume its convergence rate follows
$\sup_{\beta\in\mB(n,s_n)}E\|\widehat\beta-\beta\|^2 =\lambda s_n\log n$ for some $\lambda$. Note here
$\lambda$ is not necessarily a constant, it can be a function of $n$ and $s_n$.

Let $\mB'(n,s_n)$ be the set of all $n$-dimensional vectors who have exactly $s_n$ nonzero entries, 
and all the nonzero entries are $\{c_3\log n\}^{1/2}$, for some constant $c_3$. 
Hence for any $\beta\in\mB'(n,s_n)$, $\lambda s_n\log n\geq E\|\widehat\beta-\beta\|^2\geq 
c_3\log n\sum_{i\in\xi(\beta)}P_{\beta}(\widehat\beta_i=0)$. This implies that
for any $\beta\in\mB'(n,s_n)$,
$\sum_{i\in\xi(\beta)}P_{\beta}(\widehat\beta_i=0)\leq \lambda s_n/c_3$, and there exist $i_1,\dots i_{s'}\in\xi(\beta)$ such that
$P_{\beta}(\widehat\beta_{i_j}=0)\leq 2\lambda/c_3$, where $s'=s_n/2$.

Note there are totally ${n \choose s_n}$ elements in $\mB'(n,s_n)$, each $\beta$ in $\mB'(n,s_n)$ has its own indices $i_1,\dots i_{s'}\in\xi(\bbeta)$ such that $P_{\beta}(\widehat\beta_{i_j}=0)\leq 2\lambda/c_3$ holds. 
This relationship can be viewed as one-to-$s'$ mapping from $\mB'(n,s_n)$ to $\mB'(n,s_n-1)$, i.e., from $\beta$ to $\{\beta^{(i_j)} \}_{j=1}^{s'}$ where $\beta^{(i_j)}$ is the vector which replaces $i_j$th entry of $\beta$ with 0.
Since there are totally ${n \choose s_n-1}$ elements in $\mB'(n,s_n-1)$, there must exists at least one $\beta_0\in\mB_{s_n-1}'$,
such that there are at least $m=s'{n \choose s_n}/{n \choose s_n-1} = (n-s_n+1)/2$ elements in $\mB'(n,s_n)$ that map to $\beta_0$.
These $m$ elements in $\mB'(n,s_n)$ are denoted by $\beta_0^{j}$ for $j=1,\dots,m$,  where $\beta_0^{j}$ and $\beta_0$ differ only at one entry (denoted by $k_j$th entry).

Now let $A = \{y: \xi(\beta_0) \supset\xi(\widehat\beta(y))\}$, 
the above argument implies that 
\begin{equation}\label{powww}
\max_{j\leq m}\int_A \phi(y;\beta_0^j, I)dy \leq 
2\lambda/c_3.
\end{equation}


Now, we implement Lemma \ref{power}.
Set $A$ is viewed as the accept region for hypothesis $H_0: \beta=\beta_0$, and the corresponding testing size 
$\alpha = P_{\beta_0}(A^c)\leq \sup_{\beta\in\mB(n,s_n)}P_\beta(\mbox{FP}>0)$.
Let $\mF_{\rho^*} = \{\beta_0^j, j=1,\dots, m\}$. Given a uniform $\mu_{\rho^*}$,
$P_{\mu_{\rho^*}} = (1/m)\sum_{j} P_{\phi(\cdot;\beta_0^j, I)}$, and then 
$L_{\mu_{\rho^*}}(y) = (1/m)\sum_{j=1}^m \exp\{t y_j-t^2/2\}$, where $t=\{c_3\log n\}^{1/2}$, and
\[
\begin{split}
 &E_{\beta_0}(L_{\mu_\rho^*}(y)^2) = (1/m^2)\left\{\sum_{i\neq j}E\exp\{t y_j+ty_i-t^2\} +\sum_{j}E\exp\{2t y_j-t^2\}\right\}\\
 =&(1/m^2)[m(m-1)\exp(0)+m\exp(t^2)]=1+(n^{c_3}-1)/m.
\end{split}
\]

Therefore, by lemma \ref{power}, when $c_3<1$, we have
\[
\max_j P_{\beta_0^j}(A)\geq 1-\alpha-\{(n^{c_3}-1)/m\}^{1/2}/2.
\]
Combining the above inequality with (\ref{powww}), we have that 
\[
\frac{\sup_{\beta\in\mB(n,s_n)}E\|\widehat\beta-\beta\|^2 }{s_n\log n} =\lambda\geq (c_3/2)(1-\sup_{\beta\in\mB(n,s_n)}P_\beta(\mbox{FP}>0)-\delta_n),\] 
where $ \delta_n=\{(n^{c_3}-1)/m\}^{1/2}/2\rightarrow 0$.

\end{proof}

{\noindent \bf Proof of Theorem 2.6}

\begin{proof}
Instead of considering rate-minimax estimator $\widehat\beta(X,y)$, we consider all rate-minimax estimating functions $\check\beta=(\check \beta_i(X,y,\beta_{-i}))_{i=1}^n$ (which is not necessarily an estimator, as $\check \beta_i$ could depend on true unknown parameter $\beta_{-i}$), and define the collection of rate-minimax estimating functions $\check\Omega_f^l:$
\[
\check\Omega_f^l=\{\check \beta: \sup_{\beta\in\mB(p_n,s_n)}E_XE_{\beta}\sum_i(\beta_i-\check \beta_i(X,y,\beta_{-i}))^2\leq s_nf(n,p_n,s_n)/n\}.
\]
Note that $\Omega_f^l\subset \check\Omega_f^l$, hence 
\[\inf_{\widehat\beta\in\Omega_f^l}\sup_{\beta\in\mB(p_n,s_n)}E_XE_{\beta} \sum_{i=1}^{p_n} \widehat\eta_i1\{\eta_i=0\}\geq \inf_{\check\beta\in\check\Omega_f^l}\sup_{\beta\in\mB(p_n,s_n)}E_XE_{\beta} \sum_{i=1}^{p_n} \check\eta_i1\{\eta_i=0\},\]
where $E_X$ denote expectation over the probability measure of $X$, $\check\eta_i$ is the selection function induced by $\check\beta_i$.
Note that $\check\eta_i(X,y,\beta_{-i})$ can be rewritten as  $\check\eta_i(z_i,X,\beta_{-i})$ where $z_i=y-X_{-i}\beta_{-i}$.

Define prior $\pi_1$ to uniform distribution on set $\mB_1=\{\beta\in \BR^{p_n}: \beta_i=0\mbox{ or }\sqrt{c_2f(n,p_n,s_n)/n}, \|\beta\|_0=s_n-1\}$ for some constant $c_2\geq 1$.
Let $E_{X_{-i}}$ denote expectation over the measure of $X$ but without the $i$th column,
$E_{x_i}$ denote expectation over the measure of $x_i$ which is the $i$th column of $X$,
$E_\pi$ denote the expectation over $\beta$ with respect to prior $\pi$, $E_y$ and $E_{z_i}$ denote the expectation over the condition probability of $y$ and $z_i$ respectively, given $X$ and $\beta$ (Note that  $E_y\check\eta_i(y,X,\beta_{-i})1\{\eta_i=0\}=E_{z_i}\check\eta_i(z_i,X,\beta_{-i})1\{\eta_i=0\}$.)
Similar to (\ref{m0}), we can derive that
\[\begin{split}
&\sup_{\beta\in\mB(p_n,s_n)}E_{X}E_{y} \sum_{i=1}^{p_n} \check\eta_i(y,X,\beta_{-i})1\{\eta_i=0\}\geq  E_{\pi_1} E_{X}E_{y}\sum_{i=1} \check\eta_i1\{\eta_i=0\}\\
=&E_{\pi_1} E_{X}\sum_{i}E_{z_i}\check\eta_i(z_i,X,\beta_{-i})1\{\eta_i=0\}\\
=&E_{X}\int_{\beta_i}\int_{\beta_{-i}}\sum_i\int_{z_i}\check\eta_i(z_i,X,\beta_{-i})g(z_i;\beta,X)dz_i\pi_1(\beta_{-i}|\beta_i)d\beta_{-i}1\{\beta_i=0\}\pi_1(\beta_i)d\beta_i\\
=&E_{X}\int_{\beta_i}\int_{\beta_{-i}}\sum_i\int_{z_i}\check\eta_i(z_i,X,\beta_{-i})\phi(z_i;0,I)dz_i\widetilde\pi_1(\beta_{-i})d\beta_{-i}1\{\beta_i=0\}\pi_1(\beta_i)d\beta_i\\
=&E_{x_i}\sum_i\int_{\beta_i}\int_{z_i}\widetilde \eta_i(z_i, x_i)1\{\beta_i=0\}\phi(z_i;0,I)dz_i\pi_1(\beta_i)d\beta_i\\
\end{split}\]
where $\widetilde \eta_i(z_i, x_i)=E_{X_{-i}}\int_{\beta_{-i}}\check\eta_i(z_i,X,\beta_{-i})\widetilde\pi_1(\beta_{-i})d\beta_{-i}$, $\widetilde\pi_1(\beta_{-i})=\pi_1(\beta_{-i}|\beta_i=0)$, and $g(z_i;\beta,X)$ denotes the density of $z_i$ given $\beta$ and $X$, which reduces to standard normal distribution when $\beta_i=0$.
Since $E_{\pi_1}\eta_i=(s_n-1)/p_n$, we concludes 
\begin{equation}\label{m6}
\begin{split}
&\sup_{\beta\in\mB(p_n,s_n)}E_{X}E_{y} \sum_{i=1}^{p_n} \check\eta_i(z_i,X,\beta_{-i})1\{\eta_i=0\}
\geq \sum_{i}E_{x_i}(1-(s_n-1)/p_n)\int \widetilde\eta_i(z,x_i)\phi(z;0,I_n)dz.
\end{split}\end{equation}

On the other hand, since $\check\eta_i$ is rate-minimax, similar to (\ref{m1}), we obtain that
\begin{equation}\label{m7}
\begin{split}
&(1/c_2)s_n\geq s_n-(s_n /p_n)\sum_iE_{x_i}\int \widetilde\eta_i(z,x_i)\phi(z,\{c_2f(n,p_n,s_n)/n\}^{1/2}x_i,I_n)dz.
\end{split}\end{equation}


By condition (C0), $P(x\in\mX_i=\{\|x_i\|\leq \{(1+c_3) n\}^{1/2} \})\geq P_0(c_3,n)$ for any $c_3>0$. 
We let $\mE_{\mX_i}$ denote the integral with respect to measure of $x_i$ truncated within $\mX_i$ (note this 
is not a probability measure, since its total measure is less than 1), and then from (\ref{m7}) we have  
\begin{equation}\label{m8}
\begin{split}
& (s_n/p_n)\sum_i\mE_{\mX_i}\int \widetilde\eta_i(z,x_i)\phi(z,\{c_2f(n,p_n,s_n)/n\}^{1/2}x_i,I_n)dz
\geq  s_n-(1/c_2)s_n-s_n\sum_{i}(1-P(\mX_i))/p_n.
\end{split}\end{equation}

Use the arguments of Lemma \ref{nplemma}, subject to $\int \widetilde\eta_i(z,x_i)\phi(z;\{c_2f(n,p_n,s_n)/n\}^{1/2}x_i,I_n)dz\geq a_i$ for a given $a_i\in[0,1]$ and $x_i\in\BR^n$,
the following holds:
$\int \widetilde\eta_i(z,x_i)\phi(z; 0,I_n)dz\geq \Phi(\Phi^{-1}(a_i)-\{c_2f(n,p_n,s_n)/n\}^{1/2}\|x_i\|)$, and thus respectively
on $\mX_i$, we must have $\int \widetilde\eta_i(z,x_i)\phi(z; 0,I_n)dz\geq \Phi(\Phi^{-1}(a_i)-\{(1+c_3)  c_2f(n,p_n,s_n)\}^{1/2})$.

Therefore,
\[\begin{split}
&\sum_{i}(1-(s_n-1)/p_n)E_{x_i}\int \widetilde\eta_i(z,x_i)\phi(z;0,I_n)dz
\geq (1-\frac{s_n-1}{p_n})\sum_{i}\mE_{\mX_i}\int \widetilde\eta_i(z,x_i)\phi(z;0,I_n)dz\\
\geq &(1-\frac{s_n-1}{p_n})\sum_{i}\mE_{\mX_i} \Phi\left(\Phi^{-1}\left[\int \widetilde\eta_i(z,x_i)\phi(z;\{c_2f(n,p_n,s_n)/n\}^{1/2}x_i,I_n)dz\right]-\{(1+c_3) c_2 f(n,p_n,s_n)\}^{1/2}\right)\\
\geq &(1-\frac{s_n-1}{p_n}) \sum_iP(\mX_i)\\
\times&\Phi\left(\Phi^{-1}\left(\frac{\mE_{\mX_i}\int \widetilde\eta_i(z,x_i)\phi(z;\{c_2f(n,p_n,s_n)/n\}^{1/2}x_i,I_n)dz}{P(\mX_i)}\right)-\{(1+c_3) c_2 f(n,p_n,s_n)\}^{1/2}\right)\\
\geq &(1-\frac{s_n-1}{p_n})p_n P_0\\
\times&\Phi\left(\Phi^{-1}\left(\sum_i\frac{\mE_{\mX_i}\int \widetilde\eta_i(z,x_i)\phi(z;\{c_2f(n,p_n,s_n)/n\}^{1/2}x_i,I_n)dz}{p_n}\right)-\{(1+c_3)c_2 f(n,p_n,s_n)\}^{1/2}\right)\\
\geq &(1-\frac{s_n-1}{p_n}) P_0p_n\Phi\left(\Phi^{-1}\left\{{1-(1/c_2)-(1-P_0)}\right\}-\{(1+c_3) c_2 f(n,p_n,s_n)\}^{1/2}\right).
\end{split}\]
where $P_0=P_0(c_3,n)$, and the third and fourth inequalities are due to Jensen's inequality, and the last inequality is due to (\ref{m8}).

Combining the above result with (\ref{m6}), we have that 
\begin{equation}\label{m9}
\begin{split}
&\inf_{\widehat\beta\in\Omega_f^l}\sup_{\beta\in\mB(n,s_n)}E_XE_{\beta} \sum_{i=1}^n \widehat\eta_i1\{\eta_i=0\}\geq (p_n-s_n+1)\sup_{c_2>1,c_3>0}P_0(c_3,n)\\
&\times\Phi\left(\Phi^{-1}\left({P_0(c_3,n)-1/c_2}\right)-\{(1+c_3)c_2f(n,p_n,s_n)\}^{1/2}\right).
\end{split}
\end{equation}

Asymptotically, 
\begin{enumerate}
    \item if $f(n,p_n,s_n)$ is of constant order, i.e. $\lim\sup f(n,p_n, s_n)=\kappa<\infty$, $\lim\sup s_n/p_n=\zeta<1$, then when $n$ is sufficiently large,
    \[\begin{split}
    &\inf_{\widehat\beta\in\Omega_f^l}\sup_{\beta\in\mB(n,s_n)}E_XE_{\beta} \sum_{i=1}^n \widehat\eta_i1\{\eta_i=0\}\gtrsim H(\kappa)(p_n-s_n+1)\asymp H(\kappa)(1-\zeta)p_n,\end{split}\]
    where $H(\kappa)=\sup_{c_2>1}\Phi(\Phi^{-1}(1-1/c_2)-\{c_2\kappa\}^{1/2})$.
    \item if $f(n,p_n,s_n)$ diverges to infinity, i.e. $f(n,p_n,s_n)\rightarrow \infty$, then for any fixed $1<c_4<\lambda c_2$, if $n$ is sufficiently large,
     \[\begin{split}
    &\inf_{\widehat\beta\in\Omega_f^l}\sup_{\beta\in\mB(n,s_n)}E_XE_{\beta} \sum_{i=1}^n \widehat\eta_i1\{\eta_i=0\}\gtrsim (p_n-s_n+1)\Phi(\Phi^{-1}[1-1/c_2]-\{\lambda c_2f(n,s_n)\}^{1/2})\\
    \geq& C(p_n-s_n+1)\exp\{-c_4f(n,p_n,s_n)/2\}\sim C(1-\zeta)
    p_n\exp\{-c_4f(n,s_n)/2\}, \end{split}\]
    for some absolute constant $C$. Since, $\lambda c_2$ can be arbitrarily closed to 1, we 
    $$\inf_{\widehat\beta\in\Omega_f^l}\sup_{\beta\in\mB(n,s_n)}E_{\beta} \sum_{i=1}^n \widehat\eta_i1\{\eta_i=0\}\geq  C(1-\zeta)n\exp\{-[1+o(1)]f(n,p_n,s_n)/2\}.$$
\end{enumerate}
This concludes the proof.

\end{proof}

{\bf \noindent Proof of Theorem 2.8}
\begin{proof}
The proof of Theorem 2.8 is similar to the proof of Theorem 2.3, hence only a sketch of proof without details is provided. 

We only consider the case that $p_n/s_n\rightarrow\infty$ since the case of $p_n/s_n\rightarrow 0$ is trivial. For any estimation function $\check\beta=(\check \beta_i(z_i,y,\beta_{-i}))_{i=1}^n$ that possesses convergence rate $\{c_1\log(p_n/s_n)/n\}^{1/2}$, we must have that: when the true $\beta\in\mB(n,s_n)$ and its nonzero elements are $\{c_2c_1\log(p_n/s_n)/n\}^{1/2}$, we must have $E(\mbox{FN}(\check\beta))\leq s_n/c_2$.
We construct a new estimation function $\check\beta^{(T)}$ based on $\check\beta$ as $\check\beta^{(T)}_i=\check\beta_i1(|\langle z_i,x_i/\|x_i\|\rangle|\geq\|x_i\|\sqrt{c_1c_2\log(p_n/s_n)/n}-o(1)\sqrt{\log(p_n/s_n)})$ for some $o(1)$ term, such that 
$E(\mbox{FN}(\check\beta))\leq s_n/c_2+s_n\times o(1)$.
The rest proof follows the same routine as in (\ref{fdrint}).
\end{proof}

{\noindent \bf Proof of Theorem 2.10}
\begin{proof}
This proof is similar to the proof of Theorem 2.5.

Assume an estimator $\widehat\beta$ satisfies $\sup_{\beta\in\mB(p_n,s_n)}E_XE_\beta\|\widehat\beta-\beta\|^2 =\lambda s_n\log p_n/n$ for some $\lambda$,
where $\lambda$ can be a function of $n$, $s_n$ and $p_n$.

Let $\mB'(p_n,s_n)$ be the set of all $p_n$-vectors who have exactly $s_n$ nonzero entries, and all the nonzero
entries have value $\{c_3\log p_n/n\}^{1/2}$ for some small constant $c_3$. 
By the same argument used in the proof of theorem 2.5, there exists a $\beta_0\in\mB'(p_n,s_n-1)$,
and $\beta_0^j\in \mB'(p_n,s_n)$ for $j=1,\dots,m=(p-s+1)/2$, such that $\beta_0^j$ and $\beta_0$ differ by only one entry and 
\begin{equation}\label{poww}
\max_{j\leq m}\int_A g(X,y;\beta_0^j)dXdy \leq 
2\lambda/c_3.
\end{equation}
where $A = \{(X,y): \xi(\beta_0) \supset\xi(\widehat\beta(X,y))\}$, and $g(\cdot,\cdot;\beta)$ is the data generation density function given regression parameter $\beta$.


Similar, we view $A$ as the accept region for hypothesis $H_0: \beta=\beta_0$, and the corresponding testing size 
$\alpha = P_{\beta_0}(A^c)\leq \sup_{\beta\in\mB(p_n,s_n)}P_\beta(\mbox{FP}>0)$.
Let $\mF_{\rho^*} = \{\beta_0^j, j=1,\dots, m\}$. Given a uniform $\mu_{\rho^*}$,
$P_{\mu_\rho^*}(X,y) = (1/m)\sum_{j} g(X,y;\beta_0^j)$ and  
$L_{\mu_\rho^*}(X, y) = P_{\mu_\rho^*}(X,y)/g(X,y;\beta_0)$.
Thus, we can show that
\[
\begin{split}
 &E_{\beta_0}(L_{\mu_\rho^*}(X,y)^2) = \frac{m(m-1)(1-c_2^2\log^2 p_n/n^2)^{-n/2}+m(1-4c_2\log p_n/n)^{-n/2} }{m^2}.
\end{split}
\]
If $\log p_n\prec n^{-1/2}$ and we let $8c_2<1$, $E_{\beta_0}(L_{\mu_\rho^*}(X,y)^2)=[(m-1)(1+O(\log^2 p_n/n))+O(\exp\{8c_2\log p_n\})]/m=1+o(1)$.

Therefore, by Lemma \ref{power}, we have
\[2\lambda/c_3\geq 1-\sup_{\beta\in\mB(p_n,s_n)}P_\beta(\mbox{FP}>0)-\{E_0(L_{\mu_\rho^*}(X,y)^2) -1\}^{1/2}/2=1-\sup_{\beta\in\mB(p_n,s_n)}P_\beta(\mbox{FP}>0)-o(1).\]
\end{proof}

\begin{lemma}\label{chi}
 Let $\chi^2_{d}(\kappa)$ be a chi-square distribution with degree of freedom $d$, and noncentral parameter $\kappa$, then we have the following concentration inequality
\[Pr( \chi^2_{d}(\kappa)>d+\kappa+2x+\{(4d+8\kappa)x\}^{1/2}\leq \exp(-x), \mbox{ and}\] 
\[Pr( \chi^2_{d}(\kappa)<d+\kappa-\{(4d+8\kappa)x\}^{1/2}\leq \exp(-x).\]
\end{lemma}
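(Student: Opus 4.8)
The plan is to prove both inequalities by the Chernoff (Cram\'er–Chernoff) method applied to the moment generating function of a noncentral chi-square, following the classical argument of Laurent and Massart.

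First I would record the distributional representation $\chi^2_d(\kappa) \stackrel{d}{=} \sum_{i=1}^d (Z_i + \mu_i)^2$, where $Z_1,\dots,Z_d$ are i.i.d.\ $N(0,1)$ and $\mu_1,\dots,\mu_d$ are arbitrary constants with $\sum_{i=1}^d \mu_i^2 = \kappa$. A direct Gaussian integral gives, for $0 \le t < 1/2$,
\[ E\big[e^{t(Z+\mu)^2}\big] = (1-2t)^{-1/2}\exp\!\Big(\tfrac{t\mu^2}{1-2t}\Big), \]
so that, writing $W = \chi^2_d(\kappa)$,
\[ \log E\big[e^{t(W - d - \kappa)}\big] = \tfrac{d}{2}\big[-2t - \log(1-2t)\big] + \tfrac{2t^2\kappa}{1-2t}. \]

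Next I would bound the right-hand side. Using the elementary inequality $-\log(1-u) - u \le \tfrac{u^2}{2(1-u)}$ for $0 \le u < 1$ (with $u = 2t$), the first term is at most $\tfrac{dt^2}{1-2t}$, hence with $v := d + 2\kappa$,
\[ \log E\big[e^{t(W-d-\kappa)}\big] \le \frac{t^2 v}{1-2t}, \qquad 0 \le t < \tfrac12. \]
Set $z := 2x + 2\sqrt{vx}$. The Chernoff bound gives $P(W - d - \kappa \ge z) \le \exp\!\big(-tz + \tfrac{t^2 v}{1-2t}\big)$, and choosing $t = \sqrt{x}/(\sqrt{v} + 2\sqrt{x})$, so that $\tfrac{t}{1-2t} = \sqrt{x/v}$, a short computation shows the exponent collapses exactly to $-x$; since $z = 2x + \{(4d+8\kappa)x\}^{1/2}$, this is the first claimed inequality. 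For the lower tail I would instead take $t = -s < 0$: the same Gaussian integral yields $\log E[e^{-s(W-d-\kappa)}] = \tfrac d2[2s - \log(1+2s)] + \tfrac{2s^2\kappa}{1+2s} \le s^2 v$ via $\log(1+u)\ge u - u^2/2$ and $\tfrac{1}{1+2s}\le 1$; then $P(d+\kappa - W \ge z) \le \exp(-sz + s^2 v)$, optimized at $s = z/(2v)$, gives $\exp(-z^2/(4v))$, and setting $z = 2\sqrt{vx} = \{(4d+8\kappa)x\}^{1/2}$ yields the second inequality.

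There is no deep obstacle here — the argument is routine moment-generating-function bookkeeping. The only point needing care is the choice of the free parameter in the upper-tail Chernoff step and verifying that the resulting exponent simplifies exactly to $-x$ (rather than to a constant multiple of it); obtaining the clean constants $2x$ and $\{(4d+8\kappa)x\}^{1/2}$ hinges on this specific choice. An alternative, if a citation is preferred, is to invoke Lemma~1 of Laurent and Massart (2000) for the central case and note that the noncentral extension follows verbatim from the MGF computation above.
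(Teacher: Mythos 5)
Your proposal is correct, and it takes exactly the route the paper intends: the paper offers no written proof beyond the remark that it "follows the argument of Lemma 1 of Laurent and Massart (2000)," and your Chernoff/MGF computation is precisely that argument carried out in full for the noncentral case, with the parameter choices $t=\sqrt{x}/(\sqrt{v}+2\sqrt{x})$ and $s=z/(2v)$ correctly collapsing the exponents to $-x$. No gaps.
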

The proof follows the argument of lemma 1 of \cite{LaurentM2000}.

\vskip 0.1in
Now let us state a condition on the design matrix $X$, which will be used in the following Lemmas.
\begin{itemize}
 \item[C2:] There exist two constants $\underline\lambda$, $\overline\lambda$, 
such that all the eigenvalues of $X^T_\xi X_\xi/n$ are bouned within $(\underline\lambda,\overline\lambda)$ 
for any $|\xi|\leq q_n= M s_n$ for some sufficiently large $M$.
\end{itemize}

\begin{lemma}\label{SCthm}
  For a regression problem $y=X\beta^{(n)}+\varepsilon$ where $\beta^{(n)}\in\mB_0(s_n,p_n)$ and 
  $X$ satisfies condition (C2), if $\widetilde p_n$ satisfies either
1.  $s_n\leq\widetilde p_n\prec p_n$ and $\gamma>2$; or
2.  $\limsup s_n/p_n<\zeta$, $\widetilde p_n=\min(n,p_n)$ and $\gamma$ is satisfies (\ref{gamma3}),
 then the estimator (4.1) satisfies
\begin{align}
 &\lim_nP_{\beta^{(n)}}(\|\widehat\beta-\beta^{(n)}\|\leq C\{s_n\log(p_n/s_n)/n\}^{1/2})=1,\label{rate}\\
 &\lim_nP_{\beta^{(n)}}(\|\widehat\beta\|_0\leq C's_n)=1,\label{size}
\end{align}
for some sufficiently large $C$ and $C'$, where $C'$ can be arbitrarily close to 1 as long as $\gamma$ is sufficiently large. 
\end{lemma}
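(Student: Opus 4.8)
The plan is to run the standard ``basic inequality'' argument for $\ell_0$-penalized least squares, exploiting the additive form of $pe(k)=\gamma\sum_{i=1}^{k}\log(p_n/i)$, and to read off both (\ref{rate}) and (\ref{size}) from a single master inequality. Write $\xi^{*}=\xi(\beta^{(n)})$, $s^{*}=\|\beta^{(n)}\|_0\le s_n$, $\mu=X\beta^{(n)}$, and let $P_\xi=X_\xi(X_\xi^{T}X_\xi)^{-1}X_\xi^{T}$ be the projection onto the column span of $X_\xi$ (well defined for $|\xi|\le q_n$ by (C2)). Since $\widehat\xi$ minimizes $\mathrm{RSS}(\xi)+pe(|\xi|)$, comparison with $\xi^{*}$ gives $\mathrm{RSS}(\widehat\xi)+pe(|\widehat\xi|)\le\mathrm{RSS}(\xi^{*})+pe(s^{*})$. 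Decomposing $\mathrm{RSS}$ through the superset $\widehat\xi\cup\xi^{*}$, using $P_{\xi^{*}}\mu=P_{\widehat\xi\cup\xi^{*}}\mu=\mu$, and handling the cross term $2\langle(I-P_{\widehat\xi})\mu,\varepsilon\rangle$ — which, since $(I-P_{\widehat\xi})\mu$ lies in the $\widehat b:=|\xi^{*}\setminus\widehat\xi|$-dimensional range of $P_{\widehat\xi\cup\xi^{*}}-P_{\widehat\xi}$, is at most $\tfrac12\|(I-P_{\widehat\xi})\mu\|^{2}+2\|(P_{\widehat\xi\cup\xi^{*}}-P_{\widehat\xi})\varepsilon\|^{2}$ — I arrive at the master inequality
\begin{equation*}
\tfrac12\big\|(I-P_{\widehat\xi})\mu\big\|^{2}+pe(|\widehat\xi|)-pe(s^{*})\ \le\ \big\|(P_{\widehat\xi\cup\xi^{*}}-P_{\xi^{*}})\varepsilon\big\|^{2}+2\big\|(P_{\widehat\xi\cup\xi^{*}}-P_{\widehat\xi})\varepsilon\big\|^{2},
\end{equation*}
in which the first noise term has $\widehat a:=|\widehat\xi\setminus\xi^{*}|$ degrees of freedom and the second has $\widehat b$.

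The second step is a uniform control of the two noise terms. For a competing model with $a$ false positives and $b$ false negatives there are at most $\binom{p_n-s^{*}}{a}$ admissible false-positive sets and $\binom{s^{*}}{b}$ false-negative sets, and $\|(P_{\xi\cup\xi^{*}}-P_{\xi^{*}})\varepsilon\|^{2}$ depends only on the former. Applying Lemma~\ref{chi} with deviation level $x_a\asymp a\log(ep_n/a)$ (plus a summable slack) and a union over all $a$, and likewise for the $\chi^{2}_b$ term with slack $\asymp b\log(es^{*}/b)$ (the near-orthogonality from (C2) lets me replace $P_{\xi\cup\xi^{*}}-P_{\xi}$ by the projection onto $X_{\xi^{*}\setminus\xi}$ up to a controlled error, so that only $\binom{s^{*}}{b}$ sets enter), I obtain an event $\mathcal G$ of probability $\to 1$ on which simultaneously $\|(P_{\widehat\xi\cup\xi^{*}}-P_{\xi^{*}})\varepsilon\|^{2}\le(2+o(1))\,\widehat a\log(p_n/\widehat a)$ and $\|(P_{\widehat\xi\cup\xi^{*}}-P_{\widehat\xi})\varepsilon\|^{2}=O(s_n)$. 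The constant $2$ here is exactly the tail exponent of a $\chi^{2}_a$ maximised over $\binom{p_n}{a}\approx(ep_n/a)^{a}$ models, and it is what forces the penalty constant $\gamma>2$ (and, in the linear regime where $a,b$ may be comparable to $p_n$ and $\log\binom{p_n}{a}\asymp\gamma a\log(p_n/a)$, the larger threshold of (\ref{gamma3})).

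From here (\ref{size}) follows. If $|\widehat\xi|\le s^{*}$ there is nothing to prove, so assume $|\widehat\xi|>s^{*}$, i.e.\ $\widehat a>\widehat b$; then $pe(|\widehat\xi|)-pe(s^{*})=\gamma\sum_{i=s^{*}+1}^{|\widehat\xi|}\log(p_n/i)\ge\gamma(\widehat a-\widehat b)\log(p_n/|\widehat\xi|)$, and the master inequality on $\mathcal G$ (dropping the nonnegative first term) yields
\begin{equation*}
\gamma\,(\widehat a-\widehat b)\,\log(p_n/|\widehat\xi|)\ \le\ (2+o(1))\,\widehat a\log(p_n/\widehat a)+O(s_n).
\end{equation*}
Since $\widehat a\le|\widehat\xi|$ and, whenever $\widehat a$ is of the same order as $|\widehat\xi|$, $\log(p_n/\widehat a)\asymp\log(p_n/|\widehat\xi|)$, this gives $|\widehat\xi|-s^{*}=\widehat a-\widehat b\le\big(2/\gamma+o(1)\big)|\widehat\xi|+O(s_n/\gamma)$, hence $|\widehat\xi|\le\big(\gamma/(\gamma-2)+o(1)\big)s_n=:C's_n$ with $C'\downarrow1$ as $\gamma\to\infty$ (one checks separately that the cases $\widehat a=o(|\widehat\xi|)$ and $|\widehat\xi|\asymp p_n$ do not worsen this). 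Finally, for (\ref{rate}), on $\mathcal G\cap\{|\widehat\xi|\le C's_n\}$ the set $\widehat\xi\cup\xi^{*}$ has size $\le q_n$, so (C2) applies, and using $pe(s^{*})\lesssim s_n\log(p_n/s_n)$ with the master inequality gives $\|(I-P_{\widehat\xi})\mu\|^{2}\lesssim\gamma s_n\log(p_n/s_n)$; since $X(\widehat\beta-\beta^{(n)})=P_{\widehat\xi}\varepsilon-(I-P_{\widehat\xi})\mu$ with $\|P_{\widehat\xi}\varepsilon\|^{2}=\chi^{2}_{|\widehat\xi|}\lesssim s_n\log(p_n/s_n)$ on $\mathcal G$, we obtain $n\underline\lambda\|\widehat\beta-\beta^{(n)}\|^{2}\le\|X_{\widehat\xi\cup\xi^{*}}(\widehat\beta-\beta^{(n)})\|^{2}\lesssim s_n\log(p_n/s_n)$, which is (\ref{rate}). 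The hard part is the second and third steps — keeping every constant sharp enough that $\gamma>2$ (respectively (\ref{gamma3})) is the exact threshold and $C'\to1$, which is precisely where the penalty $\gamma\sum_i\log(p_n/i)$, calibrated to the model-counting entropy $\log\binom{p_n}{a}$, is essential, and where in the regression case one must spend condition (C2) twice: to decouple the false-negative projection from the false-positive set, and to pass from prediction error to estimation error.
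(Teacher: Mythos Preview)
Your basic-inequality route is sound in spirit and your treatment of (\ref{rate}) is essentially equivalent to the paper's, but there is a genuine circularity in your proof of the size bound (\ref{size}). To control the second noise term $\|(P_{\widehat\xi\cup\xi^{*}}-P_{\widehat\xi})\varepsilon\|^{2}$ by $O(s_n)$ uniformly, you invoke the near-orthogonality furnished by (C2) in order to replace $P_{\xi\cup\xi^{*}}-P_{\xi}$ by the projection onto $X_{\xi^{*}\setminus\xi}$ and thereby reduce the union to $\binom{s^{*}}{b}$ sets. But (C2) only controls the spectrum of $X_\xi^{T}X_\xi/n$ for $|\xi|\le q_n=Ms_n$ with a \emph{fixed} constant $M$; a priori $|\widehat\xi|$ may be as large as $\tilde p_n$, which in case~2 equals $\min(n,p_n)\gg Ms_n$. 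For such $\xi$, (C2) is unavailable, the range of $P_{\xi\cup\xi^{*}}-P_\xi$ depends on all of $\xi$ (not just $\xi^{*}\setminus\xi$), and the union bound must run over $\binom{p_n}{|\xi|}$ models, producing a contribution of order $|\xi|\log(p_n/|\xi|)$ rather than $O(s_n)$. You are thus using the conclusion $|\widehat\xi|\lesssim s_n$ in the very step meant to establish it; at best one recovers the result only for a threshold on $\gamma$ strictly larger than $2$, losing the sharp statement of case~1.

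The paper avoids this by proving (\ref{size}) \emph{without} the second noise term at all: it uses the cruder inequality $\mathrm{RSS}(\xi)\ge\mathrm{RSS}(\xi\cup\xi^{*})$, so that
\[
\mathrm{SC}(\xi)-\mathrm{SC}(\xi^{*})\ \ge\ -\,\|(P_{\xi\cup\xi^{*}}-P_{\xi^{*}})\varepsilon\|^{2}\ +\ pe(|\xi|)-pe(s^{*}),
\]
retaining only your \emph{first} noise term. That term is an honest central $\chi^{2}_{|\xi\setminus\xi^{*}|}$ regardless of (C2), so Lemma~\ref{chi} together with Stirling handles the full range $|\xi|\le\tilde p_n$ and delivers the sharp threshold $\gamma>2$ (respectively (\ref{gamma3})). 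Only \emph{after} (\ref{size}) is secured does the paper invoke (C2), now legitimately since $|\widehat\xi\cup\xi^{*}|\le(C'+1)s_n\le q_n$, and then bounds $\|\beta^{(n)}_{\xi^{*}\setminus\widehat\xi}\|$ via a noncentral chi-square argument---equivalent to your control of $\|(I-P_{\widehat\xi})\mu\|^{2}$. Your remaining steps for (\ref{rate}) are correct.
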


\begin{proof}
Let $\mbox{SC}(\xi)=\mbox{RSS}(\xi)+\gamma\log(p_n^{|\xi|}/|\xi|!)$ and $\xi^*=\xi(\beta^{(n)})$.
Thus $\widehat\beta=\beta_{\widehat\xi}^o$ where $\widehat\xi=\min_{|\xi|\leq \widetilde p_n}\mbox{SC}(\xi)$ and $\beta_{\xi}^o$ denotes the OLS estimation
under the model $\xi$.

First, we show the result (\ref{size}), i.e., the model size of $\widehat\beta$ is bounded. 
Let $\Xi=\{\xi; \mbox{SC}(\xi)\leq \mbox{SC}(\xi^*)\}$, then it is sufficient to show that $\Xi\subset\{\xi: |\xi|\leq C's_n\}$ for some $C'$ with large probability.
Note that 
\begin{equation}\label{scdiff}
\begin{split}
\mbox{SC}(\xi)-\mbox{SC}(\xi^*)&\geq (\mbox{RSS}(\xi\cup\xi^*)-\mbox{RSS}(\xi^*))+\gamma\log (p_n^{|\xi|-|\xi^*|}|\xi^*|!/|\xi|!),
\end{split}
\end{equation}
where $-(\mbox{RSS}(\xi\cup\xi^*)-\mbox{RSS}(\xi^*))=^d \chi^2_{|\xi\backslash\xi^*|}$. 
By Lemma \ref{chi}, uniformly for all $|\xi| = S$, with probability larger than $1-\exp\{-c_1C's_n\log(ep_n/C's_n)\}$,
\begin{equation}\label{deltasse}
\begin{split}
 &{-\mbox{RSS}(\xi\cup\xi^*)+\mbox{RSS}(\xi^*)}\\
 \leq&|\xi|+2[\log{p_n\choose S} +c_1C's_n\log(ep_n/C's_n)]+2\left\{ |\xi|[\log{p_n\choose S}+c_1C's_n\log(ep_n/C's_n)]\right\}^{1/2},
\end{split}
\end{equation}
for any $c_1>0$.

On the other hand, for $|\xi|\in (C's_n,p_n]$, by Stirling's approximation, $(2\pi)^{1/2}n^{n+1/2}e^{-n}e^{1/(12n+1)}<n!<(2\pi)^{1/2}n^{n+1/2}e^{-n}e^{1/(12n)}$,
the second term on the right handed side of (\ref{scdiff}) satisfies
\begin{equation}\label{deltap}
\begin{split}
 \gamma\log (p_n^{|\xi|-s_n}s_n!/|\xi|!) &\geq\gamma\log(p_n^S/S!)\left\{\frac{\lceil (C'-1)s_n\rceil\log (ep_n/s_n)-(\lceil C's_n\rceil+1/2)\log[\lceil C's_n\rceil/s_n] }{\lceil C's_n \rceil \log (ep_n/s_n)}\right\} 
\end{split}
\end{equation}
Note that if $p_n\succ s_n$, then the term inside the curly brackets on the right-hand side of (\ref{deltap}) converges to 
$1-1/C'$.

If $s_n\prec p_n$ and $C's_n\leq S\leq\widetilde p_n\prec p$, then $S=o(\log p_n^S/S!)$, $\log (p_n^S/S!)/\max(\log{p_n \choose S}, C's_n\log (ep_n/C's_n))\gtrsim1$.
Combined with (\ref{scdiff}), (\ref{deltasse}) and (\ref{deltap}), we have that if
\[\gamma \left(\frac{\lceil (C'-1)s_n\rceil\log (ep_n/s_n)-(\lceil C's_n\rceil+1/2)\log[(\lceil C's_n\rceil/s_n] }{\lceil C's_n \rceil \log (ep_n/s_n)}\right) >2+c_1,\]
then (\ref{deltap}) is asymptotically larger than (\ref{deltasse}), and (\ref{scdiff}) is positive. 
Therefore if $s_n\prec p_n$ and $\widetilde p_n\prec p_n$, for any $\gamma>2$, there exist a sufficiently large constant $C'$ and a sufficiently small $c_1$, such that (\ref{scdiff}) is positive for all $\widetilde p_n\geq |\xi|>C's_n$, with probability at least
$1-\widetilde p_n\exp\{-c_1C'\log(ep_n/s_n)\}=1-o(1)$ asymptotically.

When $C's_n\leq S\leq p_n $, by Stirling's approximation, 
$\log(p_n^S/S!)/\max\{ C's_n\log (ep_n/C's_n), S, \log{p_n\choose S} \} \gtrsim 1  $, 
hence, as long as
\begin{equation}\label{gamma2}
\gamma\left(\frac{\lceil (C'-1)s_n\rceil\log (ep_n/s_n)-(\lceil C's_n\rceil+\frac{1}{2})\log[(\lceil C's_n\rceil/s_n] }{\lceil C's_n \rceil \log (ep_n/s_n)}\right)>3+2c_1+2(1+c_1)^{1/2}, \mbox{ for all } C's_n\leq p_n,
\end{equation}
then (\ref{scdiff}) is positive for all $|\xi|>C's_n$ with probability at least
$1-p_n\exp\{-c_1C'\log(ep_n/{C's_n})\}$.
If $C'>1$ is an integer and $\limsup s_n/p_n\leq \zeta<1$, then the (\ref{gamma2}) reduces to 
\begin{equation}\label{gamma3}
\gamma\left(\frac{C'-1}{C'}-\frac{\log C'}{(1+\log(\max\{C',1/\zeta))\}}\right)>5, \mbox{ or }
\gamma\max_{C'\in\mathbb{N}^+}\left(\frac{C'-1}{C'}-\frac{\log C'}{(1+\log(\max\{C',1/\zeta))\}}\right)>5
\end{equation}
given small $c_1$. On the other hand, for any $1<C'<1/\zeta$, (even if $C'$ is very close to 1)
it is not difficult to see that
\[
\liminf \left(\frac{\lceil (C'-1)s_n\rceil\log (ep_n/s_n)-(\lceil C's_n\rceil+1/2)\log[(\lceil C's_n\rceil/s_n] }{\lceil C's_n \rceil \log (ep_n/s_n)}\right)>0,
\]
thus that (\ref{gamma2}) still holds if $\gamma$ is sufficiently large.  


Secondly, let us prove (\ref{rate}). It is sufficient to show that 
uniformly for any $\xi\in\Xi\cap\{\xi: |\xi|\leq C's_n\}$, $\|\beta_{\xi}^o - \beta^{(n)}\|^2\leq C{s_n\log(p_n/s_n)/n}$ for some constant $C$.

For any $\xi\supset\xi^*$ and $|\xi|\leq C's_n$, by condition \red{(C2)} and Lemma \ref{chi}, 
given a large $C$,
\begin{equation}\label{betadiff}
\begin{split}
 &P\{\|\beta_{\xi}^o-\beta^{(n)}\|^2\geq C{s_n\log(ep_n/s_n)/n}\}
\leq Pr\{\chi_{|\xi|}^2\geq \overline\lambda Cs_n\log(ep_n/s_n)\}
\leq \exp(-c_2s_n\log(ep_n/s_n))\\
\end{split}
\end{equation}
for some constant $c_2$.

For any $\xi\in\Xi$, $|\xi|\leq Cs$ and $\xi^*\nsubseteq\xi$, note that
\[
0>\mbox{SC}(\xi)-\mbox{SC}(\xi^*)>
\mbox{RSS}(\xi)-\mbox{RSS}(\xi\cup\xi^*)+\mbox{RSS}(\xi\cup\xi^*)-\mbox{RSS}(\xi^*)-\gamma \sum_{j=1}^{|\xi^*|}\log(p_n/j).
\]
By Lemma \ref{chi} and condition \red{(C2)}, we have $|\mbox{RSS}(\xi\cup\xi^*)-\mbox{RSS}(\xi^*)| = O(s_n\log(ep_n/s_n))$ with dominating probability,
thus $[\mbox{RSS}(\xi)-\mbox{RSS}(\xi\cup\xi^*)]=O(s_n\log(ep_n/s_n))$. Note that $[\mbox{RSS}(\xi)-\mbox{RSS}(\xi\cup\xi^*)]$ follows a noncentral 
chi-squared distribution, thus by Lemma \ref{chi}, we must have that
$\|\beta_{\xi^*\backslash\xi}^{(n)}\|^2=O({s_n\log(p_n/s_n)/n})$. 
This furthermore leads to that 
\begin{equation} \label{betadiff2}
\begin{split}
 &P\{\|\beta_{\xi}^{o}-\beta^{(n)}\|^2\geq C{s_n\log(ep_n/s_n)/n}\}
\leq \exp(-c_3s_n\log(ep_n/s_n)),
\end{split}
\end{equation}
for some positive $c_3$ given  a sufficiently large $C$. Combine (\ref{betadiff}) and (\ref{betadiff2}),
\[
\begin{split}
&P\{\|\beta_{\xi}^{o}-\beta^{(n)}\|^2\geq C{s_n\log(ep_n/s_n)/n},\mbox{ for all }\xi\in\Xi, |\xi|\leq C's_n\}\\
\leq& C's_n{p_n\choose C's_n}\exp(-\min(c_2,c_3)s_n\log(ep_n/s_n))\leq \exp(-c_4s_n\log(p_n/s_n)),
\end{split}
\]
where the last inequality holds if $c_2$ and $c_3$ is large enough which is ensured by a sufficiently large $C$.
\end{proof}

{\noindent \bf Proof of Theorem 4.1}
\begin{proof}
In this proof, we only show the results corresponding to the case $s_n\prec n$. The proof for the general case that $\limsup s_n/n\leq \zeta$ is similar, we igore it.

 First of all, the normal means model can be rewritten as $y=n^{1/2}(\beta^{(n)}/n^{1/2})$, i.e., we view $n^{1/2}I$ as the design matrix.
 Therefore, it satisfies condition C2, and use the same arguments in the proof of Lemma \ref{SCthm}, we have that $\|\widehat\beta-\beta^{(n)}\|^2\leq Cs_n\log(n/s_n)$
 and  $\|\widehat\beta\|_0\leq C's_n$, With dominating probability.
 Let $\widehat\xi=\xi(\widehat\beta)$, $\xi_1=\xi^*\cap\widehat\xi$ and $\xi_2=\widehat\xi\backslash\xi_1$, then 
with high probability, 
 \begin{equation}\label{l0}
\begin{split}
&\mbox{SC}(\xi_1)-\mbox{SC}(\xi_2\cup\xi_1)=\sum_{i\in\xi_2}\varepsilon_i^2-\gamma\sum_{i=|\xi_1|+1}^{|\xi_2\cup\xi_1|} \log (n/i)\leq (2+\delta)|\xi_2|\log (n/|\xi_2|)- \gamma\sum_{i=|\xi_1|+1}^{|\xi_2\cup\xi_1|} \log (n/i)\\
=&(2+\delta+o(1))\sum_{i=1}^{|\xi_2|}\log (n/i)- \gamma\sum_{i=1}^{|\xi_2|} \log (n/(|\xi_1|+i))
\end{split}
\end{equation}
where the inequality holds with dominating probability for any $\delta>0$ by Lemma \ref{chi}, and last equation  is due to $ \sum_{i=1}^{|\xi_2|}\log (n/i)\sim|\xi_2|\log (n/|\xi_2|)$.
Let $\gamma'$ be any constant satisfying $(1+\gamma')(2+\delta)<\gamma$.
We can show that (\ref{l0}) is negative if $|\xi_2|\geq \max\{1, |\xi_1|(|\xi_1|/n)^{\gamma'}\}$ when $n$ is sufficiently large.
This is due the following two facts.
First, if $C's_n\geq i\geq|\xi_1|$,
\[
\begin{split}
&(2+\delta+o(1))\log (n/i)-\gamma\log(n/(|\xi_1|+i))=\gamma \log((|\xi_1|+i)/i)-(\gamma-2-\delta-o(1))\log(n/i)\\
\leq &\gamma\log 2-(\gamma-2-\delta-o(1))\log(n/(C's_n))<0, \quad\,\mbox{when $n$ is large}.
\end{split}
\]
Second, if $|\xi_1|\geq i>|\xi_1|(|\xi_1|/n)^{\gamma'}$,
\[
\begin{split}
&(2+\delta+o(1))\log (n/i)-\gamma\log(n/(|\xi_1|+i))\\
=&(2+\delta+o(1))\log (1+|\xi_1|/i)-(\gamma-2-\delta-o(1))\log(n/(|\xi_1|+i)))\\
<&(2+\delta+o(1))\log [1+(n/|\xi_1|)^{\gamma'}]-(\gamma-2-\delta-o(1))\log(n/|\xi_1|)
<0, \quad\,\mbox{when $n$ is large}.
\end{split}
\]
Third, for $i=\lceil |\xi_1|(|\xi_1|/n)^{\gamma'}\rceil$,
\[
\begin{split}
 &(2+\delta)i\log (n/i)- \gamma\sum_{j=|\xi_1|+1}^{|\xi_1|+i} \log (n/i)\leq (2+\delta)i\log (n/i)- \gamma i\log(n/(|\xi_1|+i))\\
 \leq &i[(2+\delta)\log (n/i)- \gamma \log(n/(2|\xi_1|)) ]
 = i[(2+\delta)\log(2|\xi_1|/i)+(\gamma-2-\delta)\log(n/(2|\xi_1|))  ].
\end{split}
\]
Together, these above three inequalities imply our result on the false positive control of $\widehat\beta$.

Now we study the convergence rate of $\widehat\beta$. Note that
$\|\widehat\beta-\beta^{(n)}\|^2 = \sum_{i\in\xi_1}\varepsilon_i^2 + \sum_{i\in\xi_2}\varepsilon_i^2+\sum_{i\in\xi^*\backslash\xi_1}[\beta^{(n)}_i]^2$.
And $\sum_{i\in\xi_1}\varepsilon_i^2=O_p(s_n)=o_p(s_n\log(n/s_n))$, $ \sum_{i\in\xi_2}\varepsilon_i^2 = O_p(|\xi_2|\log(n/|\xi_2|))$
 by Lemma 6 of \cite{WuZ2013}, thus we only need to show that $\sum_{i\in\xi^*\backslash\xi_1}[\beta^{(n)}_i]^2\leq (\gamma+\epsilon)s_n\log (n/s_n)$
 with high probability, for any positive small $\epsilon$. Note that this can be derived from the facts that
$\mbox{SC}(\xi_1\cup\xi_2)<\mbox{SC}(\xi^*)$, and $|\xi_2|/s_n$ is sufficiently small.
\end{proof}

\begin{lemma}\label{diff}
For regression model under Gaussian random design (condition C1)
with $\beta^{(n)}\in\mB_0(s_n,p_n)$, $s_n\log (p_n/s_n)\prec n$ and $s_n\prec p_n$,
given any positive constants $C$, $M$ and tiny constant
$\epsilon>0$, we have that
\[ y^T(P_{\xi_1\cup\xi_2}-P_{\xi_1})y\leq\left[C_0 \{s_n^2\log^2(p_n/s_n)/n\}^2+\{(2+\epsilon)|\xi_2|\log(p/|\xi_2|)\}^{1/2}\right]^2\]
holds with high probability uniformly for all $\xi_1\subseteq\xi^*=\xi(\beta^{(n)})$,
$\|\beta_{\xi^*\backslash\xi_1}^{(n)}\|\leq M\{s_n\log(p_n/s_n)/n\}^{1/2}$,
 $0<|\xi_2|\leq Cs$ and $\xi_2\subset\xi^{*c}$ where $C_0$ is some positive constant and 
 $P_\xi$ denotes the projection matrix induced by $X_{\xi}$.
\end{lemma}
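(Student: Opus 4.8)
The plan is to strip off the part of $y$ lying in $\mathrm{col}(X_{\xi_1})$ (which $P_{\xi_1\cup\xi_2}-P_{\xi_1}$ annihilates), isolate a ``bias'' piece carried by the true coordinates $\xi^*\backslash\xi_1$ missed by $\xi_1$ and a noise piece, and bound the last two uniformly over the admissible $(\xi_1,\xi_2)$. Write $y=X_{\xi_1}\beta^{(n)}_{\xi_1}+v+\varepsilon$ with $v:=X_{\xi^*\backslash\xi_1}\beta^{(n)}_{\xi^*\backslash\xi_1}$, and set $Q:=P_{\xi_1\cup\xi_2}-P_{\xi_1}$. Since $\xi_1\subseteq\xi_1\cup\xi_2$ we have $QX_{\xi_1}=0$, so $y^TQy=\|Q(v+\varepsilon)\|^2\le(\|Qv\|+\|Q\varepsilon\|)^2$, and $Q=P_W$ is the rank-$|\xi_2|$ orthogonal projection onto $\mathrm{col}(W)$, where $W:=(I-P_{\xi_1})X_{\xi_2}$ has columns lying in $\mathrm{col}(X_{\xi_1})^{\perp}$. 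It then suffices to show, uniformly over admissible $(\xi_1,\xi_2)$, that $\|Q\varepsilon\|^2\le(2+\epsilon)|\xi_2|\log(p_n/|\xi_2|)$ and $\|Qv\|^2\lesssim s_n^2\log^2(p_n/s_n)/n$, up to terms of smaller order; the displayed bound then follows from $(a+b)^2$, with $C_0$ chosen to dominate the residue.

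For the noise piece put $u:=(I-P_{\xi_1})\varepsilon$, so that $\|Q\varepsilon\|^2=\|P_Wu\|^2\le\|W^Tu\|^2/\lambda_{\min}(W^TW)$ and $W^Tu=X_{\xi_2}^T\varepsilon-X_{\xi_2}^TP_{\xi_1}\varepsilon$. The leading term does not see $\xi_1$ at all: $\sup_{|\xi_2|=k}\|X_{\xi_2}^T\varepsilon\|^2$ is $\|\varepsilon\|^2$ times the sum of the top $k$ of the $p_n-s_n$ i.i.d.\ $\chi^2_1$ variables $(x_j^T\varepsilon)^2/\|\varepsilon\|^2$, $j\in\xi^{*c}$, which by the usual extreme-value/union estimate is $(2+o(1))\,kn\log(p_n/k)$ with high probability (using $\|\varepsilon\|^2=n(1+o(1))$). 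The $\xi_1$-dependent correction is lower order, since $\|X_{\xi_2}^TP_{\xi_1}\varepsilon\|\le\|P_{\xi_1}\varepsilon\|\,\sup_{|\xi_2|=k}\|X_{\xi_2}^T(P_{\xi_1}\varepsilon/\|P_{\xi_1}\varepsilon\|)\|$ and $\|P_{\xi_1}\varepsilon\|^2\le\|P_{\xi^*}\varepsilon\|^2=O_p(s_n)$ simultaneously over all $\xi_1\subseteq\xi^*$, while $\lambda_{\min}(W^TW)\ge n(1-o(1))$ uniformly by standard Gaussian random-matrix bounds together with $|\xi_1|+|\xi_2|\prec n$. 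Dividing gives $\sup_{\xi_1,\xi_2}\|Q\varepsilon\|^2\le(2+o(1))\,|\xi_2|\log(p_n/|\xi_2|)$, the stray $\log e$ from $\log(ep_n/k)$ being absorbed into $\epsilon$ because $\inf_{k\le Cs_n}\log(p_n/k)\to\infty$.

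For the bias piece, condition~(C1) makes the columns $X_{\xi_2}$ (indices in $\xi^{*c}$) independent of the $\xi^*$-columns and of $\varepsilon$; hence, conditionally on the $\xi^*$-columns, $\mathrm{col}(W)$ is a uniformly distributed $|\xi_2|$-dimensional subspace of the $(n-|\xi_1|)$-dimensional $\mathrm{col}(X_{\xi_1})^{\perp}$, independent of the fixed vector $(I-P_{\xi_1})v$ lying in it. So $\|Qv\|^2$ equals $\|(I-P_{\xi_1})v\|^2$ times a $\mathrm{Beta}$-type factor concentrating near $|\xi_2|/(n-|\xi_1|)$, which Lemma~\ref{chi} plus a union bound over $(\xi_1,\xi_2)$ controls by $\{|\xi_2|\log(ep_n/|\xi_2|)+s_n\}/(n-|\xi_1|)$ (the extra $s_n$ being the cost of the $\xi_1$-union). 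Combining with $\|(I-P_{\xi_1})v\|^2\le\|v\|^2$, where $\|v\|^2$ is $\|\beta^{(n)}_{\xi^*\backslash\xi_1}\|^2\le M^2 s_n\log(p_n/s_n)/n$ times a $\chi^2_n$ variable hence $\le M^2 s_n\log(p_n/s_n)(1+o(1))$ uniformly in $\xi_1$, with $|\xi_2|\le Cs_n$ so $|\xi_2|\log(ep_n/|\xi_2|)\lesssim s_n\log(p_n/s_n)$, and with $n-|\xi_1|\asymp n$ (as $s_n\prec n$), yields $\|Qv\|^2\lesssim s_n^2\log^2(p_n/s_n)/n$, i.e.\ the first term of the display.

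The main obstacle is the uniformity over the up to $2^{s_n}$ subsets $\xi_1\subseteq\xi^*$: a crude union bound over them costs a factor $e^{s_n}$, which for very small $|\xi_2|$ is not dominated by $|\xi_2|\log(p_n/|\xi_2|)\asymp\log p_n$. The way around it, visible in the two estimates above, is to keep the $\xi_1$-free main terms ($X_{\xi_2}^T\varepsilon$, the ambient $\chi^2_n$) separate and to bound every $\xi_1$-dependent quantity ($P_{\xi_1}\varepsilon$, $(I-P_{\xi_1})v$, $\lambda_{\min}(W^TW)$) by its unperturbed value up to a relative error $O(\sqrt{|\xi_1|/n})=o(1)$, exploiting the near-orthogonality of Gaussian columns; a crude $e^{s_n}$ union bound on these corrections is then harmless, the relevant tails from Lemma~\ref{chi} decaying like $e^{-cs_n\log(p_n/s_n)}$ exactly as in the proof of Lemma~\ref{SCthm}. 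Assembling the pieces via $(a+b)^2$ and collecting the $o(1)$ terms using $s_n\log(p_n/s_n)\prec n$ (and whatever additional room the sparsity of $\xi_2$ affords) then finishes the argument, with $C_0$ precisely what is needed to absorb this residue.
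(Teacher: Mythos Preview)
Your argument is essentially correct but takes a genuinely different route from the paper. The paper's proof does \emph{not} split $v+\varepsilon$; instead it first uses the near--orthogonality of Gaussian columns to approximate the projection difference $P_{\xi_1\cup\xi_2}-P_{\xi_1}$ by $P_{\xi_2}$ up to a multiplicative $\delta_n\asymp\{s_n\log(p_n/s_n)/n\}^{1/2}$ correction, and then exploits the key observation that $z:=y-X_{\xi_1}\beta^{(n)}_{\xi_1}$ is, \emph{marginally over $X_{\xi^*\backslash\xi_1}$}, an isotropic Gaussian $N(0,(1+\|\beta^{(n)}_{\xi^*\backslash\xi_1}\|^2)I_n)$ independent of $X_{\xi_2}$. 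Thus $\|P_{\xi_2}z\|^2$ is directly a scaled $\chi^2_{|\xi_2|}$, giving the $(2+\epsilon)|\xi_2|\log(p_n/|\xi_2|)$ term in one stroke; the $C_0$ term arises entirely from the $\delta_n$--perturbation of the projection. In your route, by contrast, $Q=P_W$ is kept exact and the $C_0$ term comes from the bias piece $\|Qv\|$, while the noise piece is handled via $W^T\varepsilon=X_{\xi_2}^T\varepsilon-X_{\xi_2}^TP_{\xi_1}\varepsilon$. The paper's device is slicker in that it never separates bias from noise; yours avoids the projection approximation and the somewhat opaque ``matrix algebra'' step.

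One small point to tighten: your claim that the $\xi_1$--dependent correction $\|X_{\xi_2}^TP_{\xi_1}\varepsilon\|$ is ``lower order'' via a crude $e^{s_n}$ union bound does not quite work as written for very small $|\xi_2|$, since the relevant tail there is only $e^{-c|\xi_2|\log(p_n/|\xi_2|)}$, not $e^{-cs_n\log(p_n/s_n)}$. The clean fix is exactly the idea you sketch at the end: replace the $\xi_1$--dependent direction by a bound through the fixed $\xi^*$, namely $\|X_{\xi_2}^TP_{\xi_1}\varepsilon\|\le\|X_{\xi_2}^TP_{\xi^*}\|_{op}\,\|P_{\xi^*}\varepsilon\|$, where $\|X_{\xi_2}^TP_{\xi^*}\|_{op}\lesssim\sqrt{s_n}+\sqrt{|\xi_2|\log(p_n/|\xi_2|)}$ uniformly via a Gaussian random--matrix bound on the $|\xi_2|\times s_n$ matrix $X_{\xi_2}^TU_{\xi^*}$. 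The resulting extra $s_n/\sqrt{n}$ contribution is then absorbed into the $C_0$ term rather than the noise term. The same $P_{\xi^*}$--trick handles the $\xi_1$--uniformity of your Beta argument for $\|Qv\|$ (since $(I-P_{\xi_1})v\in\mathrm{col}(X_{\xi^*})$). With this adjustment your proof is complete.
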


\begin{proof}
First of all, conditional on $X_{\xi^*}$, $\varepsilon^TP_{\xi^*}\varepsilon \sim \chi_s^2$,
thus by Lemma \ref{chi}, with probability at least $1-\exp\{-c_1s_n\log(p_n/s_n)\}$,
\begin{equation}\label{chi1}
\varepsilon^TP_{\xi^*}\varepsilon\leq s_n\log (p_n/s_n),
\end{equation}
for some positive $c_1$.
Similarly, given a subset model $\xi$, for a random vector $\widetilde\varepsilon\sim N(0, \sigma^2)$ that is independent of $X_\xi$, with probability $1-\exp((1+c_2)|\xi|\log(p_n/|\xi|))$,
\begin{equation}\label{chi2}
\widetilde\varepsilon^TP_{\xi}\widetilde\varepsilon\leq  |\xi|+(2+2c_2)|\xi|\log(p_n/|\xi|) +2|\xi|\{(1+c_2)\log(p_n/|\xi|)\}^{1/2},
\end{equation}
for any $c_2>0$.

By Corollary 5.35 of \cite{Vershynin2012}, 
given a sufficiently large $n$, with probability at least $1-\exp\{-c_3s_n\log(p_n/s_n)\}$,
uniformly for all $|\xi_1\cup\xi_2|\leq (C+1)s$,
the singular values of of $X_{\xi_1\cup\xi_2}/{n}^{1/2}$ are inside the  interval 
$(1-\delta_n,1+\delta_n)$, where $c_3$ is some positive constant and $\delta_n\asymp \{s_n\log(p_n/s_n)/n\}^{1/2}$.
This implies that $X_{\xi_1}$ and $X_{\xi_2}$ are almost orthogonal, and  by matrix algebra, we can show that
\[
\begin{split}
 &y^T(P_{\xi_1\cup\xi_2}-P_{\xi_1})y
= (y-X_{\xi_1}\beta^{(n)}_{\xi_1})^T(P_{\xi_1\cup\xi_2}-P_{\xi_1})(y-X_{\xi_1}\beta^{(n)}_{\xi_1})\\
=&(C'\delta_n\|P_{\xi_1} (y-X_{\xi_1}\beta^{(n)}_{\xi_1})\|+ (1+C'\delta_n)\|  P_{\xi_2}(y-X_{\xi_1}\beta^{(n)}_{\xi_1})\|)^2\\
\leq&(C'\delta_n\|P_{\xi^*} \varepsilon\|+C'\delta_n\|P_{\xi_1} X_{\xi^*\backslash\xi_1}\beta^{(n)}_{\xi^*\backslash\xi_1}\| +
(1+C'\delta_n)\| P_{\xi_2} (y-X_{\xi_1}\beta^{(n)}_{\xi_1})\|)^2\\
\leq&(C'\delta_n\|P_{\xi^*} \varepsilon\|+C''\delta_n^2(1+\delta_n) (n)^{1/2}\|\beta^{(n)}_{\xi^*  \backslash\xi_1}\| +
(1+C'\delta_n)\| P_{\xi_2} (y-X_{\xi_1}\beta^{(n)}_{\xi_1})\|)^2,
 \end{split}
\]
for some constants $C'$ and $C''$.

Note that $(y-X_{\xi_1}\beta^{(n)}_{\xi_1})\sim N(0, (1+\|\beta_{\xi^{(n)}\backslash\xi_1}\|^2)I_n)$
is independent to $X_{\xi_2}$. Thus, combining the above inequality with (\ref{chi1}) and (\ref{chi2}), 
we have that with probability at least
$1-\exp\{-c_1s_n\log(p_n/s_n)\}-\sum_{i=1}^{Cs_n}{p_n \choose i}\exp\{(1+c_2)i\log(p_n/i)\} -\exp\{-c_3s_n\log(p_n/s_n)\}(=1-o(1))$,
\[
\begin{split}
 &y^T(P_{\xi_1\cup\xi_2}-P_{\xi_1})y\\
\leq&[C'\delta_n\{s_n\log(p_n/s_n)\}^{1/2}+C''\delta_n^2(1+\delta_n) n^{1/2}\|\beta^{(n)}_{\xi^*\backslash\xi_1}\| \\
&+(1+C'\delta_n)\{1+\|\beta_{\xi^*\backslash\xi_1}^{(n)}\|^2\}^{1/2}\{(2+c_4)|\xi_2|\log(p/|\xi_2|)\}^{1/2}]^2\\
\leq&\left[C_0 \{s_n^2\log^2(p_n/s_n)/n\}^{1/2}+\{(2+\epsilon)|\xi_2|\log(p_n/|\xi_2|)\}^{1/2}\right]^2
 \end{split}
\]
for some constant $C_0$, where $c_4>2c_3$ can be arbitrarily small if we choose $c_3$ to be sufficiently small.

\end{proof}

{\noindent \bf Proof of Theorem 4.2}
\begin{proof}
We only prove the case that $s_n\prec p_n$. The results for general case that $\limsup s_n/ p_n <\zeta$ can be proved in a similar way.

By Corollary 5.35 of \cite{Vershynin2012} and $s_n\log(p_n/s_n)\prec n$, condition C2 holds 
with dominating probability,
thus by Lemma \ref{SCthm}, $|\widehat\xi|<Cs$ and $\|\beta_{\xi^*\backslash\widehat\xi}^{(n)}\|\leq M\{s_n
\log(p_n/s_n)/n\}^{1/2}$ for some $C$ and $M$, where $\widehat\xi=\xi(\widehat\beta)$ and $\xi^*=\xi(\beta^{(n)})$.
Let $\xi_1=\xi^*\cap\widehat\xi$ and $\xi_2=\widehat\xi\backslash\xi_1$, then by Lemma \ref{diff},
with high probability, 
 \begin{equation}\label{l3}
\begin{split}
&\mbox{SC}(\xi_1)-\mbox{SC}(\xi_2\cup\xi_1)=y^T(P_{\xi_1\cup\xi_2}-P_{\xi_1})y-\gamma\sum_{i=|\xi_1|+1}^{|\xi_2\cup\xi_1|} \log (p/i)\\
\leq&\left[C_0 \{s_n^2\log^2(p_n/s_n)/n\}^{1/2}+\{(2+\epsilon)|\xi_2|\log(p_n/|\xi_2|)\}^{1/2}\right]^2 - \gamma\sum_{i=|\xi_1|+1}^{|\xi_2\cup\xi_1|} \log (p/i),
\end{split}
\end{equation}

Note that since $s\log(p/s)\prec n$, hence $ \{s^2\log^2(p/s)/n\}^{1/2}= o (\{s\log(p/s)\}^{1/2})$. Therefore, it is not difficult to see that (\ref{l3}) asymptotically is 
negative if $|\xi_2|\geq \delta 's$ for any positive $\delta'>0$, as long as $\gamma>(2+\epsilon)$.
If furthermore, $s\log(p/s)\prec {n}^{1/2}$, then $\{s^2\log^2(p/s)/n\}^{1/2}=o(1)$, and by the same argument used in the proof
of Theorem 4.1, we can show that (\ref{l3}) asymptotically is 
negative if $|\xi_2|\geq \max\{1, |\xi_1|(|\xi_1|/p)^{\gamma'}\}$ where $\gamma'$ satisfies $(1+\gamma')(2+\epsilon)<\gamma$.
This hence proves our claim on the number of false positive selection.

Now, we study the $L_2$ estimation error 
$\|\widehat\beta-\beta^{(n)}\|^2=\sum_{i\in\xi^*\backslash\widehat\xi}[\beta_i^{(n)}]^2+\sum_{i\in\widehat\xi}
(\widehat\beta_i-\beta^{(n)}_i)^2$. 
Recall that given a large $n$, with high probability, condition C2 holds with
$1-\delta<\underline\lambda<\overline\lambda<1+\delta$ for any fixed arbitrarily small constant $\delta$. 
And our following analysis is conditional on this event.
First of all,
\begin{equation}\label{l4}\begin{split}
&[\sum_{i\in\widehat\xi}
(\widehat\beta_i-\beta^{(n)}_i)^2]^{1/2}
=\|(X_{\widehat\xi}^TX_{\widehat\xi})^{-1}X_{\widehat\xi}^T\varepsilon\|+\|(X_{\widehat\xi}^TX_{\widehat\xi})^{-1}X_{\widehat\xi}^TX_{\xi^*\backslash\widehat\xi}
\beta_{\xi^*\backslash\widehat\xi}^{(n)}\|\\
\leq& \sup_{\xi: |\xi\backslash\xi^*|\leq \delta's}\|(X_{\widehat\xi}^TX_{\widehat\xi})^{-1}X_{\widehat\xi}^T\varepsilon\|+ \{(2\delta/(1-\delta))\sum_{i\in\xi^*\backslash\widehat\xi}[\beta_i^{(n)}]^2\}^{1/2}\\
\leq& \delta'O(s_n\log(p_n/s_n))^{1/2}+ \{(2\delta/(1-\delta))\sum_{i\in\xi^*\backslash\widehat\xi}[\beta_i^{(n)}]^2\}^{1/2},
\end{split}
\end{equation}
with high probability.
Therefore, it suffices to study the value $\|\beta_{\xi^*\backslash\widehat\xi}^{(n)}\|$.

Since $\widehat\xi$ is the solution of minimization, 
\begin{equation}\label{l1}
\begin{split}
0&>\mbox{SC}(\widehat\xi)-\mbox{SC}(\widehat\xi\cup\xi^*)\geq \mbox{RSS}(\widehat\xi)-\mbox{RSS}(\widehat\xi\cup\xi^*)-\gamma\sum_{i=1}^{s_n} \log (p_n/i)\\
&=\|(P_{\widehat\xi\cup\xi^*}-P_{\widehat\xi})X\beta^{(n)}+(P_{\widehat\xi\cup\xi^*}-P_{\widehat\xi})\varepsilon)\|^2-\gamma\sum_{i=1}^{s_n} \log (p_n/i)\\
&\geq\left(\{(1-\delta)\sum_{i\in\xi^*\backslash\widehat\xi}[\beta_i^{(n)}]^2\}^{1/2}-\|(P_{\widehat\xi\cup\xi^*}-P_{\widehat\xi})\varepsilon\|\right)^2-\gamma\sum_{i=1}^{s_n} \log (p_n/i).
\end{split}
\end{equation}
Note that $(P_{\widehat\xi\cup\xi^*}-P_{\widehat\xi})\varepsilon = (I-P_{\widehat\xi})X_{\widetilde\xi}(X_{\widetilde\xi}^T(I-P_{\widehat\xi})X_{\widetilde\xi})^{-1}X_{\widetilde\xi}^T(I-P_{\widehat\xi})\varepsilon$,
 where $\widetilde\xi =\xi^*\backslash\widehat\xi$, hence 
$\|(P_{\widehat\xi\cup\xi^*}-P_{\widehat\xi})\varepsilon\|\leq \{1/n(1-\delta)\}^{1/2}\|X_{\widetilde\xi}^T(I-P_{\widehat\xi})\varepsilon\|
\leq \{1/n(1-\delta)\}^{1/2}(\|X_{\widetilde\xi}^T\varepsilon\|+\|X_{\widetilde\xi}^T P_{\widehat\xi}\varepsilon\|).$
Note that $|\widehat\xi|\leq (1+\delta')s_n$, and by Lemma \ref{chi}, for any fixed $\delta''>0$, we have
\begin{equation}\label{l2}
\begin{split}
&\sup_{|\xi|\leq (1+\delta')s_n}\|X_{\xi^*\backslash\xi}^T\varepsilon\|+\|X_{\xi^*\backslash\xi}^T P_{\xi}\varepsilon\|\\
\leq& \{n(1+\delta)\}^{1/2}\{(s+2\log 2^s+(\delta''/3) s\log(p/s))\}^{1/2}\\
&+\{2n\delta\}^{1/2}\left[(1+\delta')s+(2+\delta''_1/3)\log{p\choose (1+\delta')s}\right]^{1/2}
\end{split}
\end{equation}
holds with probability at least $1-\exp\{-cs\log(p/s)\}$ for some $c>0$.

Combine (\ref{l1}), (\ref{l2}) and (\ref{l4}), it is easy to see that with high probability,
$n\|\widehat\beta-\beta^{(n)}\|^2\leq (\gamma+c)s_n\log (p_n/s_n)$ for any fixed $c>0$,
as long as we choose $\delta$, $\delta'$ and $\delta''$ to be sufficiently small.

\end{proof}

\section{Incompatibility between rate minimaxity and selection consistency under regression models}

 In section 3, we introduce the concept of monotone estimator under normal means models, and establish the incompatibility between rate minimaxity and selection consistency. However, as mentioned, the concept of monotonicity doesn't naturally carry over to general linear regression models unless the design matrix is exact orthogonal. 
 By random matrix theory, e.g., \cite{Vershynin2012}, under condition (C1), with high probability, the singular values of low dimensional submatrix of $X$ are very close to 1, i.e., the columns in  $X$ are nearly orthogonal to each other. Hence, we conjecture that Lemma 3.1 still holds for regression models under condition (C1). That is, a selection-consistent estimator $\widehat\beta(X,y)$, which possesses monotonicity under normal orthogonal models (i.e., $\widehat\beta(X,y)$ is monotone with respect to $X^Ty$ when $X$ is exactly orthogonal), still ensures that there is no false discovery. Further theoretical investigation on this matter is beyond the scope of this work. 
 But we indeed can show that  a selection-consistent penalized estimator, under proper conditions, ensures that there is no false discovery. Note that penalized estimator, under exact orthogonal design, is usually monotone.

Let $\widehat\beta(X,y)$ be the penalized estimator using penalty function $pe(\beta)=\sum_jp(\beta_j, p_n, n)$, i.e.,
$\widehat\beta = \underset{\beta}{\mathrm{argmin}}\|y-X\beta\|^2/\sigma^2+pe(\beta)$, where $\sigma^2$ is the known variance for random error term.
Assume the function $p$ satisfies
\begin{itemize}
 \item[(D1):] $\arg\min_{\beta_j}[(y_j-\beta_j)^2+\sigma^2p(\beta_j)]\neq 0$ if and only if $|y_j|> \sigma h(p_n,n)$,
\end{itemize}
where $h(p_n,n)=h$ is the thresholding value caused by the penalization under exact orthogonal design.
Furthermore, we assume that under random Gaussian design, the estimator has the following properties:
\begin{itemize}
 \item[(D2):] Its convergence rate is $r_n=r_n(n,p_n,s_n)$, i.e., $\lim_nP_{\beta^{(n)}}(\|\widehat\beta(X,y)-\beta^{(n)}\|\leq r_n) =1$;
 \item[(D3):] $\lim_nP_{\beta^{(n)}}(\|\widehat\beta(X,y)\|_0\leq C_0s_n) =1$ for some $C_0>0$;
 \item[(D4):] $\lim_{n}P_{\beta^{(n)}}\{\xi(\widehat\beta(y))=\xi(\beta^{(n)})\}=1$, if $\|\beta^{(n)}\|_{\min}\geq t(n,s_n,p_n)$ for some function $t$,
\end{itemize}
for any sequence of $\beta^{(n)}\in \mB(p_n,s_n)$ satisfying $(s_n,p_n)_{n=1}^\infty\in\mathcal D$ where $\mathcal D$ is some set of configurations of 
dimension and sparsity growth.
Then, we have that
\begin{theorem}\label{prop}
 Any estimator $\widehat\beta$ that meets the above conditions must satisfy
 \[\lim_nP_{\beta^{(n)}}(\mbox{FP}(\widehat\beta)=0) =1,\]
 for any sequence of $\beta^{(n)}\in \mB(p_n,s_n)$ satisfying $(s_n,p_n)_{n=1}^\infty\in\mathcal (D\cap \widetilde{\mathcal D})$
  where $\widetilde{\mathcal D}=\{(s_n, p_n)_{n=1}^\infty:\limsup s_n/p_n\leq c< 1, \lim p_n=\infty \mbox{ and } s_n^{1/2}\log p_nr_n\prec 1\}$.
\end{theorem}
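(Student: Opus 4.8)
The plan is to argue by contradiction, transplanting the majorization idea from the proof of Lemma~\ref{lem1} to the Gaussian-design setting, with condition (D1) --- coordinatewise hard thresholding at level $\sigma h(p_n,n)$ under orthogonal design --- playing the role that monotonicity of the thresholding rule played when $X=I$. Suppose the conclusion fails, so that along a subsequence $P_{\beta^{(n)}}(\mathrm{FP}(\widehat\beta)>0)\ge c_0>0$ for some $\beta^{(n)}\in\mB(p_n,s_n)$ with $(s_n,p_n)\in\mathcal D\cap\tilde{\mathcal D}$. Put $\xi^*=\xi(\beta^{(n)})$ and let $\tilde\beta^{(n)}$ share the support $\xi^*$ but with every nonzero entry enlarged in magnitude to at least $t(n,s_n,p_n)$ (keeping its sign). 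Then $\|\tilde\beta^{(n)}\|_{\min}\ge t(n,s_n,p_n)$ and $(s_n,p_n)\in\mathcal D$, so (D4) forces $P_{\tilde\beta^{(n)}}(\mathrm{FP}(\widehat\beta)>0)\to0$, and the argument reduces to the comparison
\[
P_{\tilde\beta^{(n)}}(\mathrm{FP}(\widehat\beta)>0)\ \ge\ P_{\beta^{(n)}}(\mathrm{FP}(\widehat\beta)>0)-o(1),
\]
which contradicts $c_0>0$.

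To get this comparison I would couple the two models through a common $(X,\varepsilon)$ and condition on a high-probability event $\mathcal E$ on which: (D2)--(D3) hold for both fitted models; by Corollary~5.35 of \cite{Vershynin2012} every $X_\xi$ with $|\xi|\le(C_0+1)s_n$ has singular values in $(1-\delta_n,1+\delta_n)$ with $\delta_n\asymp\{s_n\log(p_n/s_n)/n\}^{1/2}$ (that $s_n\log(p_n/s_n)\prec n$ follows from $s_n^{1/2}\log p_n\,r_n\prec1$ once one notes $r_n\gtrsim\{s_n/n\}^{1/2}$ by (D3)); and $\|\varepsilon\|^2=n(1+o(1))$. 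On $\mathcal E$, for each $j\notin\xi^*$ the coordinate $\widehat\beta_j$ is the minimizer of $\min_{\beta_j}\|z_j-X_j\beta_j\|^2/\sigma^2+p(\beta_j)$ with partial residual $z_j=y-X_{-j}\widehat\beta_{-j}$; rescaling this one-dimensional problem to the canonical form of (D1) (with effective noise level $\sigma/\|X_j\|$) gives $j\in\xi(\widehat\beta)\iff|\langle X_j,z_j\rangle|>\tau_j$ with $\tau_j=\|X_j\|\sigma h(p_n,n)=\sqrt n\,\sigma h(p_n,n)(1+o(1))$. Because $z_j=\varepsilon+X_S(\beta^{(n)}_S-\widehat\beta_S)$ for $S=(\xi^*\cup\xi(\widehat\beta))\setminus\{j\}$ with $|S|\le(C_0+1)s_n$ and $\|\beta^{(n)}_S-\widehat\beta_S\|\le r_n$, near-orthogonality of $X_{S\cup\{j\}}$ yields $|\langle X_j,z_j\rangle-\langle X_j,\varepsilon\rangle|\le C\delta_n nr_n$, and similarly for the boosted fit. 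Hence on $\mathcal E$ a false discovery is, for either signal, sandwiched between the events $\{\exists\,j\notin\xi^*:|\langle X_j,\varepsilon\rangle|>\max_j\tau_j+C\delta_n nr_n\}$ and $\{\exists\,j\notin\xi^*:|\langle X_j,\varepsilon\rangle|>\min_j\tau_j-C\delta_n nr_n\}$, neither of which depends on the signal.

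It then suffices to show $\max_{j\notin\xi^*}|\langle X_j,\varepsilon\rangle|$ avoids the window $(\min_j\tau_j-C\delta_n nr_n,\ \max_j\tau_j+C\delta_n nr_n)$ with probability $1-o(1)$. Conditionally on $\varepsilon$ the quantities $\langle X_j,\varepsilon\rangle/\|\varepsilon\|$, $j\notin\xi^*$, are i.i.d.\ standard normal, so the maximum equals $\{2\log p_n\}^{1/2}$ up to a lower-order term with Gumbel fluctuations of order $\{\log p_n\}^{-1/2}$. Applying (D4) to $\tilde\beta^{(n)}$ already makes the upper event have vanishing probability, which forces $\min_j\tau_j/\|\varepsilon\|$ to sit strictly above the maximum by a slack $\omega_n\{\log p_n\}^{-1/2}$ with $\omega_n\to\infty$ (here one uses $\max_j\tau_j/\min_j\tau_j=1+O(\delta_n)$). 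The remaining job is to check that the window, of width of order $\delta_n\sqrt{n\log p_n}+\delta_n nr_n$, is negligible against this slack after dividing by $\|\varepsilon\|$ --- i.e.\ that $\delta_n\log p_n$ and $\delta_n nr_n/\|\varepsilon\|$ are $o(\omega_n)$ --- which is exactly what the hypothesis $s_n^{1/2}\log p_n\,r_n\prec1$ buys. Granting this, $|P_{\tilde\beta^{(n)}}(\mathrm{FP}>0)-P_{\beta^{(n)}}(\mathrm{FP}>0)|=o(1)$, the desired contradiction.

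The step I expect to be the main obstacle is the last one: pinning down the selection threshold $\sigma h(p_n,n)$ relative to $\{2\log p_n\}^{1/2}$ and verifying that the perturbation incurred by substituting estimated for true coefficients in the partial residuals --- of order $\delta_n nr_n$ --- stays below the (possibly slowly divergent) slack that (D4) provides; this balance is precisely the content of $s_n^{1/2}\log p_n\,r_n\prec1$. A secondary technical point is the reduction used in the second paragraph: one must be entitled to invoke (D1) for the rescaled one-coordinate subproblem at effective noise level $\sigma/\|X_j\|\asymp n^{-1/2}\sigma$, i.e.\ to read (D1) as a structural property of the penalty family across noise scales rather than at the single scale $\sigma$.
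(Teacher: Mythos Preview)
Your approach is correct in outline and shares the paper's skeleton: argue by contradiction, use (D1) to convert a false positive at $j\notin\xi^*$ into the threshold condition $|X_j^T(\varepsilon+X_S(\beta_S^{(n)}-\widehat\beta_S))|/\|X_j\|>\sigma h$, control the cross term by $O(r_n\sqrt{s_n\log p_n})$ via near-orthogonality, and use $s_n^{1/2}\log p_n\,r_n\prec1$ to make that perturbation negligible on the Gumbel scale of $\max_j|X_j^T\varepsilon|/\|X_j\|$. The point you flag about reading (D1) across noise scales $\sigma/\|X_j\|$ is also implicit in the paper's proof.

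Where the paper departs from you is in the choice of comparison signal for (D4): instead of boosting to $\tilde\beta^{(n)}$ with the same support, the paper simply takes $\beta=0$, which satisfies the beta-min condition vacuously (the paper's convention is $\|0\|_{\min}=\infty$). This buys a substantial simplification. Under $\beta=0$ every selection is a false positive, and only the one-sided implication ``$\max_i|X_i^T\varepsilon|/\|X_i\|>\sigma h\Rightarrow\widehat\beta\neq0$'' is needed, which follows directly from coordinate-wise optimality at the origin with no perturbation term at all. The paper's argument is then just: FP under $\beta^{(n)}$ with probability $\ge C''$ forces $\max_i|Z_i|\ge h-O(r_n\sqrt{s_n\log p_n})$ with probability $\ge C''$; a short anti-concentration computation upgrades this to $P(\max_i|Z_i|\ge h)$ bounded away from zero; hence $P_0(\xi(\widehat\beta)\neq\emptyset)$ is bounded away from zero, contradicting (D4). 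No coupling, no two-sided sandwich, no need to locate $h$ or bound a window.

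Your route can be completed, but the window step is where the extra work piles up. If you keep the unnormalized thresholds $\tau_j=\|X_j\|\sigma h$, the window picks up a contribution $\sigma h(\max_j\|X_j\|-\min_j\|X_j\|)\asymp\sigma h\sqrt{\log p_n}$, and to kill this on the Gumbel scale you must first extract an upper bound $h=O(\sqrt{\log p_n})$ from the lower sandwich and then use something like $\log^3p_n\prec n$. (Normalizing by $\|X_j\|$ at the outset, as the paper does with $Z_i=X_i^T\varepsilon/\|X_i\|$, removes this nuisance entirely.) Your ``slack $\omega_n$'' heuristic is the right intuition, but since $\omega_n$ is only known to diverge without a rate, you are effectively forced back to showing the window width vanishes outright on the Gumbel scale---which is the same anti-concentration step the paper performs, just reached less directly.
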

\begin{proof}
If this theorem is not true, then there exists a sequence of 
$\beta^{(n)}\in \mB(p_n,s_n)$ satisfying $(s_n,p_n)_{n=1}^\infty\in\mathcal D\cap\widetilde{\mathcal D}$, such that
$P_{\beta^{(n)}}(\mbox{FP}(\widehat\beta(X,y))\geq 1) \geq C>0$ for all $n$ where $C$ is some positive constant. By conditions D2 and D3,
$P_{\beta^{(n)}}(\mbox{FP}(\widehat\beta)\geq 1, \|\widehat\beta\|_0\leq C_0s_n\mbox{ and } \|\widehat\beta-\beta^{(n)}\|\leq r_n) \geq C'$ for some $C'\in(0,C)$.

Note that $\widehat\beta_i\neq0$ implies that
\[
\begin{split}
&\underset{\beta_i}{\mathrm{argmin}}\sum_{k=1}^n(y_k-\sum_{j\neq i}^{p_n}x_{kj}\widehat\beta_j-x_{ki}\beta_i)^2+p(\beta_i)\\
=&\underset{\beta_i}{\mathrm{argmin}}\left(\beta_i-\frac{\sum_{k=1}^n x_{ki}(\varepsilon_k+\sum_{j\neq i}^{p_n}x_{kj}(\beta_j^{(n)}-\widehat\beta_j)) }{\sum_{k=1}^nx_{ki}^2}\right)^2+\frac{p(\beta_i)}{\sum_{k=1}^nx_{ki}^2}
\neq 0
\end{split}
\]

Therefore, by condition D1, with probability as least $C'$, $\|\widehat\beta\|_0\leq C_0s_n$, $\|\widehat\beta-\beta^{(n)}\|\leq r_n$ and 
\[
\max_{i\notin\xi(\beta^{(n)})}\bigg|\frac{\sum_{k=1}^n x_{ki}(\varepsilon_k+\sum_{j\neq i}^{p_n}x_{kj}(\beta_j^{(n)}-\widehat\beta_j)) }{(\sum_{k=1}^nx_{ki}^2)^{1/2}}\bigg|\geq h(p_n,n).
\]
Combining with fact that with dominating probability, the singular values of any $n$ by $(C_0)s_n$ submatrix of $X$ is  bounded by $ n^{1/2}(1+O(s_n\log p_n/n)^{1/2}) $,
the above inequality implies that
\[
 \max_{i}\bigg|\frac{\sum_{k=1}^{n}\varepsilon_kx_{ki}}{(\sum_{k=1}^nx_{ki}^2)^{1/2}}\bigg|\geq h(p_n,n)-O(r_n(s_n\log p_n)^{1/2}), \mbox{ with probability at least } C''>0,
\]
where $0<C''<C'$.

Note that $Z_i=\sum_{k=1}^{n}\varepsilon_kx_{ki}/(\sum_{k=1}^nx_{ki}^2)^{1/2}\sim N(0,1)$, thus
\begin{equation}\label{pr}
\begin{split}
&P\left(\max_{i}\bigg|\frac{\sum_{k=1}^{n}\varepsilon_kx_{ki}}{(\sum_{k=1}^nx_{ki}^2)^{1/2}}\bigg|\geq h(p_n,n)\right) \geq 1-(2\Phi[\Phi^{-1}((1-C''/2)^{1/p_n})+O(r_n(s_n\log p_n)^{1/2})]-1)^{p_n}\\
\geq& 1-(2\Phi[\Phi^{-1}(\exp(-C''/2p_n))+O(r_n(s_n\log p_n)^{1/2})]-1)^{p_n}\\
\geq& 1-\{2\exp(-C''/2p_n) -1 -2\phi[\Phi^{-1}(\exp(-C''/2p_n))]O(r_n(s_n\log p_n)^{1/2})\}^{p_n}.
\end{split}\end{equation}

By the facts $\phi(x)\leq (1-\Phi(x))[(x^2+1)/x]$, $\Phi^{-1}(\exp(-C''/2p_n))\asymp (\log p_n)^{1/2}$, 
and the assumption $O(r_n(s_n\log p_n)^{1/2})=o(1/(\log p_n)^{1/2})$, we have
\[
\begin{split}&\phi[\Phi^{-1}(\exp(-C''/2p_n))]O(r_n(s_n\log p_n)^{1/2})=[1-\exp(-C''/2p_n)]O(\log p_n)^{1/2})
O(r_n(s_n\log p_n)^{1/2})\\
=&o([1-\exp(-C''/2p_n)])=o(1/p_n)
\end{split}\]

Therefore, (\ref{pr}) reduces to
\begin{equation}\label{pr2}
P\left(\max_{i}\bigg|\frac{\sum_{k=1}^{n}\varepsilon_kx_{ki}}{(\sum_{k=1}^nx_{ki}^2)^{1/2}}\bigg|\geq h(p_n,n)\right)\gtrsim 1-\exp\{-C''\}>0.
\end{equation}

Result (\ref{pr2}) implies that if the true parameter is a 0 vector, with probability at least $1-\exp\{-C''\}$, $\xi(\widehat\beta)\neq\emptyset$, and this contradict to
condition D4.
\end{proof}

\begin{remark}
Conditions D1-D4 hold for many popular choices of penalty functions, including
LASSO, SCAD and $L_1$ penalties with $r_n^2\asymp s_n\log p_n/n$, and under such cases, the set
$\widetilde{\mathcal D}=\{(s_n, p_n)_{n=1}^\infty: \limsup s_n/p_n\leq c< 1, \lim p_n=\infty \mbox{ and } s_n^2\log^3p_n\prec n\}$.
\end{remark}
 
 \section{FDR convergence under sharp minimaxity}
 Our result in the main manuscript reveals that under $s_n\prec p_n$, the FDR decreases polynomially for rate-minimax estimators $\widehat\beta$'s with suboptimal multiplicative constant, i.e., $\lim\sup_{\beta\in\mB(p_n,s_n)}E\|\beta-\widehat\beta\|^2/R_{opt}=c>1$. On the other hand, the false discovery control behavior of sharply minimax estimators (i.e.,  $\widehat\beta$ satisfying $\lim\sup_{\beta\in\mB(n,s_n)}E_\beta\|\beta-\widehat\beta\|^2/R_{opt}=1$) is still unclear. Therefore, we present one toy simulation, aiming to understand the decay of FDR empirically under sharp minimaxity. Similarly to the example demonstrated in the Introduction section, we choose a normal means model, with  $p_n=n$, $X=I$ and $s_n=n^{1/2}$ and all nonzero $\beta_i$'s are set as $[2\log(n/s)]^{1/2}$, and consider the estimator (4.1) with tuning parameter value $\gamma=2.0$. Note that by Theorem 2 of \cite{WuZ2013}, this estimator is sharply minimax. Figure \ref{fdpplot2} plots the logarithm of estimated FDR based on 100 independent simulations versus the logarithm of 
 true sparsity ratio $\log(s_n/n)$, where $n$ ranges form $2^8*100$ to $2^{15}*100$. We still observe that the FDR decays to zero as sparsity level decreases. Despite the nearly linear pattern shown by the top plot in Figure \ref{fdpplot2}, the U-shaped residual plot suggests that the decay of $\mbox{FDR}$ is sub-polynomial, w.r.t. the sparsity level.

\begin{figure}[htp]
 \begin{center}
  \includegraphics[width=10cm]{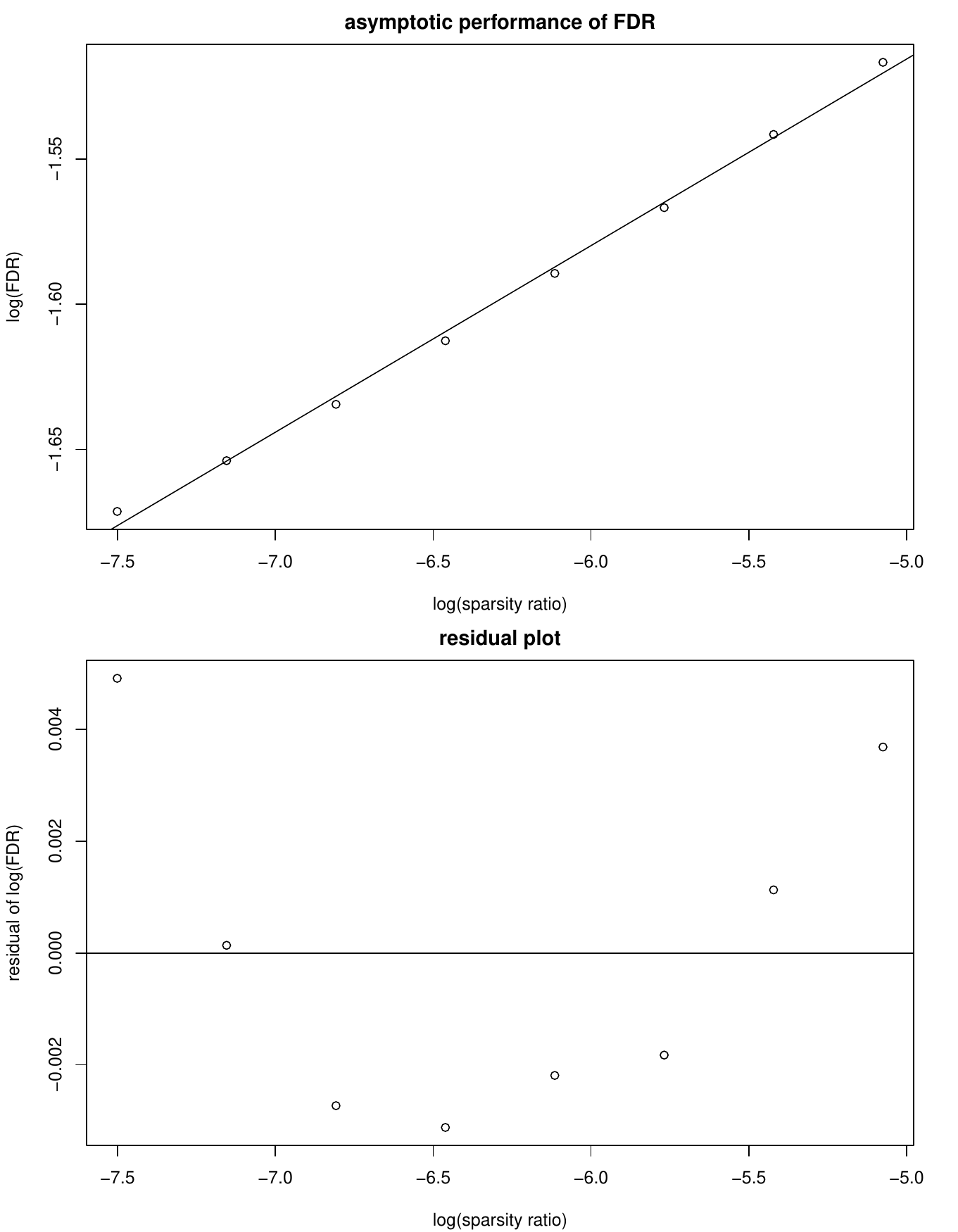}\label{fdpplot2}
  \caption{This figure plots the patter of FDR with respect to the sparsity ratio, for a sharply minimax estimator.  }
 \end{center}
\end{figure}

\vskip 0.2in
\bibliographystyle{plain}
\bibliography{ref}